\theoremstyle{plain}
\newtheorem{thm}[equation]{Theorem}
\newtheorem{lem}[equation]{Lemma}
\newtheorem{prop}[equation]{Proposition}
\newtheorem{cor}[equation]{Corollary}
\theoremstyle{definition}
\newtheorem{defin}[equation]{Definition}
\theoremstyle{remark}
\newtheorem{remark}[equation]{Remark}
\numberwithin{equation}{subsection}
\def\sheafEnd{\mathcal{E} \hspace{-1pt} \mathit{nd}}
\def\sheafHom{\mathcal{H} \hspace{-1pt} \mathit{om}}
\def\sheafEnd{\mathcal{E} \hspace{-1pt} \mathit{nd}}
\def\sheafHom{\mathcal{H} \hspace{-1pt} \mathit{om}}
\newcommand{\Z}{\mathbb{Z}}
\newcommand{\calA}{\mathcal{A}}
\newcommand{\calB}{\mathcal{B}}
\newcommand{\calC}{\mathcal{C}}
\newcommand{\calD}{\mathcal{D}}
\newcommand{\calE}{\mathcal{E}}
\newcommand{\calF}{\mathcal{F}}
\newcommand{\calH}{\mathcal{H}}
\newcommand{\calI}{\mathcal{I}}
\newcommand{\calJ}{\mathcal{J}}
\newcommand{\calK}{\mathcal{K}}
\newcommand{\calL}{\mathcal{L}}
\newcommand{\calM}{\mathcal{M}}
\newcommand{\calN}{\mathcal{N}}
\newcommand{\calO}{\mathcal{O}}
\newcommand{\calP}{\mathcal{P}}
\newcommand{\calR}{\mathcal{R}}
\newcommand{\calS}{\mathcal{S}}
\newcommand{\calT}{\mathcal{T}}
\newcommand{\calV}{\mathcal{V}}
\newcommand{\calX}{\mathcal{X}}
\newcommand{\calY}{\mathcal{Y}}
\newcommand{\calZ}{\mathcal{Z}}
\newcommand{\lotimes}{{\stackrel{_L}{\otimes}}}
\newcommand{\rcap}{{\stackrel{_R}{\cap}}}
\newcommand{\Gm}{{\mathbb{G}}_{\mathbf{m}}}
\newcommand{\For}{{\rm For}}
\newcommand{\Id}{{\rm Id}}
\newcommand{\Coh}{\mathsf{Coh}}
\newcommand{\QCoh}{\mathsf{QCoh}}
\newcommand{\Sh}{\mathsf{Sh}}
\newcommand{\Mod}{\mathsf{-Mod}}
\newcommand{\qis}{{\rm qis}}
\newcommand{\fg}{{\rm fg}}
\newcommand{\op}{{\rm op}}
\newcommand{\rmi}{\mathrm{(i)}}
\newcommand{\rmii}{\mathrm{(ii)}}
\newcommand{\scra}{\mathscr{A}}
\newcommand{\scrb}{\mathscr{B}}
\newcommand{\Sym}{{\rm Sym}}
\newcommand{\Kth}{\mathsf{K}}
\newcommand{\bc}{\mathrm{bc}}
\begin{document}

\title[Linear Koszul duality II]{Linear Koszul duality II -- Coherent sheaves \\ on perfect sheaves}

\author{Ivan Mirkovi\'c}
\address{University of Massachusetts, Amherst, MA, USA.}
\email{mirkovic@math.umass.edu}

\author{Simon Riche}
\address{Universit{\'e} Blaise Pascal - Clermont-Ferrand II, Laboratoire de Math{\'e}matiques, CNRS, UMR 6620, Campus universitaire des C{\'e}zeaux, F-63177 Aubi{\`e}re Cedex, France}
\email{simon.riche@math.univ-bpclermont.fr}

\thanks{I.M.~was supported by NSF grants. S.R.~was supported by ANR Grants No.~ANR-09-JCJC-0102-01, ANR-10-BLAN-0110 and ANR-13-BS01-0001-01.}

\begin{abstract}
In this paper we continue the study (initiated in a previous paper) of linear Koszul duality, a geometric version of the standard duality between modules over symmetric and exterior algebras. We construct this duality in a very general setting, and prove its compatibility with morphisms of vector bundles and base change.
\end{abstract}

\maketitle

\section*{Introduction}

\subsection{Linear Koszul duality}

In \cite{MR} we have defined and initiated the study of \emph{linear Koszul duality}, a geometric version of the standard Koszul duality between (graded) modules over the symmetric algebra of a vector space $V$ and (graded) modules over the exterior algebra of the dual vector space $V^*$ (see e.g.~\cite{BGG,GKM}). In our setting of \cite{MR}, we replaced the vector space $V$ by a $2$-term complex of locally free sheaves on a (smooth) scheme $X$, and $V^*$ by a shift of the dual complex, and obtained an equivalence of derived categories of dg-modules over the symmetric algebras (in the \emph{graded} sense) of these complexes. As an application, given a vector bundle $E$ over $X$ and subbundles $F_1, F_2 \subset E$, for an appropriate choice of a $2$-term complex we obtained an equivalence of derived categories of (dg-)sheaves on the derived intersections $F_1 \, \rcap_E \, F_2$ and $F_1^\bot \, \rcap_{E^*} \, F_2^\bot$. Here $E^*$ is the dual vector bundle, and $F_1^\bot, F_2^\bot \subset E^*$ are the orthogonals to $F_1$ and $F_2$.

The main result of this paper is a further generalization of this equivalence, where now $V$ is replaced by a finite complex of locally free sheaves of arbitrary length (in non-positive degrees), on a scheme satisfying reasonable assumptions. (Schemes satisfying these conditions are called \emph{nice}, see \S \ref{ss:nice-schemes}; this class of schemes is much larger than the class to which \cite{MR} applies.) See Theorems~\ref{thm:Koszuldualitycomodules}, \ref{thm:equiv} and \ref{thm:equiv2} for three versions of this equivalence. This generalization uses ideas of Positselski (\cite{Po}), and does not rely on \cite{MR} except for some technical lemmas.

A version of this equivalence (when $F_2=E$) has been used by the second author in \cite{R2} to construct a Koszul duality for representations of semi-simple Lie algebras in positive characteristic. We have also used this construction in~\cite{MRIM} to provide a geometric realization of the Iwahori--Matsumoto involution of affine Hecke algebras. For other applications, see~\cite{UI} (in the setting of singularity categories); see also~\cite[Appendix F]{AG} for the relation with the definition of singular support for coherent sheaves.

One expects other applications in Algebraic Geometry akin to~\cite{UI}. For us the most important direction is to extend the present result to complexes of commutative group ind-schemes of arbitrary length. The case of length $2$ is Laumon's Fourier transform for $1$-motives~\cite{laumon}. Just as Laumon's transform is essential in the geometric Class Field Theory, we expect that the extension to arbitrary length will play the same role in an expected higher dimensional extension of the geometric Class Field Theory.

\subsection{Coherent sheaves on perfect sheaves}

If $\calE$ is a locally free $\calO_X$-module of finite rank, then it is well known that the category of quasi-coherent $\mathrm{S}_{\calO_X}(\calE^\vee)$-modules is equivalent to the category of quasi-coherent sheaves on the (unique) vector bundle whose sheaf of sections is $\calE$. By analogy, if $\calX$ is a finite complex of locally free $\calO_X$-modules of finite rank, then the derived category of quasi-coherent dg-modules over the (graded) symmetric algebra $\Sym_{\calO_X}(\calX^\vee)$ can be considered as the derived category of quasi-coherent sheaves on the complex $\calX$. So to any bounded complex $\calX$ of locally free $\calO_X$-modules on a nice scheme $X$ (in non-positive degrees) one can associate a derived category of quasi-coherent sheaves on $\calX$, and in this terminology linear Koszul duality is an equivalence between certain categories of quasi-coherent sheaves on $\calX$ and on $\calX^\vee[-1]$.

Assume moreover that $X$ is a $\mathbb{Q}$-scheme. Then if $\calX$ is a bounded complex of locally free sheaves, the complex $\Sym_{\calO_X}(\calX)$ is a direct factor of the tensor algebra $\mathrm{T}_{\calO_X}(\calX)$; it follows that if $\calX \to \calX'$ is a quasi-isomorphism of such complexes, the induced morphism $\Sym_{\calO_X}(\calX) \to \Sym_{\calO_X}(\calX')$ is a quasi-isomorphism\footnote{Note that this property does not hold over a field $k$ of characteristic $p$ (as pointed out to one of us by P.~Polo): for instance the symmetric algebra of the exact complex $(k \xrightarrow{\mathrm{id}_k} k)$ where the first copy is in even degree, is the de Rham complex of $\mathbb{A}^1_k$ (with possibly some shifts in the grading); in particular its cohomology is not $k$. This gives a simple example of the principle that the theory of commutative dg-algebras
is not appropriate outside of characteristic zero, which justifies the need for Lurie's formalism of Derived Algebraic Geometry.} (since such property is clear for the tensor algebra). If $X$ moreover admits an ample family of line bundles, then using \cite[Lemme 2.2.8.c]{SGA6} we deduce that for any perfect sheaf $\calF$ in $\calD^b \Coh(X)$ one can define the derived category of quasi-coherent sheaves on $\calF$ (which is well defined up to equivalence; i.e.~this category does not depend on the presentation of $\calF$ as a bounded complex of locally free sheaves), and then one can interpret linear Koszul duality in these terms. This justifies our title.

Note however that we will not use this point of view in the body of the paper. In particular, as our results make sense in arbitrary characteristic we will not restrict to $\mathbb{Q}$-schemes.

\subsection{Linear Koszul duality and Fourier--Sato transform}

In Sections~\ref{sec:morphisms} and \ref{sec:basechange} we study the behaviour of this linear Koszul duality under natural operations: we prove compatibility results with respect to morphisms of perfect sheaves and base change. These results are used in~\cite{MRIM} in the construction of the geometric realization of the Iwahori--Matsumoto involution of affine Hecke algebras.

These compatibility properties are inspired by the results of the second author in \cite[\S 2]{R2}, and are quite similar to some compatibility properties of the Fourier transform on constructible sheaves (see \cite[\S 3.7]{KS}). In fact, we make this similarity precise in \cite{MRFourier},
showing that our linear Koszul duality and a certain Fourier transform isomorphism in homology are related via the Chern character from equivariant $\Kth$-theory to (completed) equivariant Borel--Moore homology. This result explains the relation between the geometric realization of the Iwahori--Matsumoto involution in \cite{MRIM} and the geometric realization of the Iwahori--Matsumoto involution for Lusztig's \emph{graded} affine Hecke algebras constructed in \cite{EM}; see \cite{MRFourier} for details.

\subsection{Notation} 

If $X$ is a scheme, we denote by $\Sh(X)$ the category of all sheaves of $\calO_X$-modules. We denote by $\QCoh(X)$, respectively $\Coh(X)$ the category of quasi-coherent, respectively coherent, sheaves on $X$.


We will frequently work with $\Z^2$-graded sheaves $\calM$. The $(i,j)$ component of $\calM$ will be denoted $\calM^i_j$. Here ``$i$'' will be called the cohomological grading, and ``$j$'' will be called the internal grading. Ordinary sheaves will be considered as $\Z^2$-graded sheaves concentrated in bidegree $(0,0)$. As usual, if $\calM$ is a $\Z^2$-graded sheaf of $\calO_X$-modules, we denote by $\calM^\vee$ the $\Z^2$-graded $\calO_X$-module such that
\[
(\calM^\vee)^i_j \ := \ \sheafHom_{\calO_X}(\calM^{-i}_{-j},\calO_X).
\]
For $n \in \Z$, we also also denote by $\calM \langle n \rangle$ the $\Z^2$-graded module defined by
\[
(\calM \langle n \rangle)^i_j=\calM^i_{j-n}.
\]

As in \cite{MR} we will work with $\Gm$-equivariant sheaves of quasi-coherent $\calO_X$-dg-algebras over a scheme $X$ (for the trivial $\Gm$-action on $X$).\footnote{In a different terminology one would replace ``$\Gm$-equivariant dg-algebra'' by ``dgg-algebra'' (where dgg stands for differential graded graded). We chose this convention to emphasize the different roles played by the cohomological grading and the internal one.} If $\calA$ is such a dg-algebra, we denote by $\calC(\calA\Mod)$ the category of $\Gm$-equivariant quasi-coherent sheaves of $\calO_X$-dg-modules over $\calA$. (Beware that for simplicity we do not indicate ``$\Gm$'' or ``$\mathrm{gr}$'' in this notation, contrary to our conventions in \cite{MR,R2}.) We denote by $\calD(\calA\Mod)$ the associated derived category. On a few occasions we will also consider the category $\widetilde{\calC}(\calA\Mod)$ of \emph{all} sheaves of $\Gm$-equivariant $\calA$-dg-modules on $X$ (in the sense of \cite[\S 1.7]{R2}), and the associated derived category $\widetilde{\calD}(\calA\Mod)$.

If $X$ is a scheme and $\calF$ an $\calO_X$-module (considered as a bimodule where the left and right actions coincide), we denote by $\mathrm{S}_{\calO_X}(\calF)=\bigoplus_{i \in \Z_{\geq 0}} \mathrm{S}^i_{\calO_X}(\calF)$, respectively $\bigwedge_{\calO_X}(\calF) = \bigoplus_{i \in \Z} \bigwedge_{\calO_X}^i(\calF)$, the symmetric, respectively exterior, algebra of $\calF$, i.e.~the quotient of the tensor algebra of $\calF$ by the relations $f \otimes g - g \otimes f$, respectively 
$f \otimes f$,
for $f,g$ local sections of $\calF$. 
If $\calF$ is a complex of ($\Gm$-equivariant) $\calO_X$-modules, we denote by $\Sym_{\calO_X}(\calF)$ the graded-symmetric algebra of $\calF$, i.e.~the quotient of the tensor algebra of $\calF$ by the relations $f \otimes g - (-1)^{|f| \cdot |g|}g \otimes f$ for $f,g$ homogeneous local sections of $\calF$, together with $f \otimes f$ for local sections $f$ such that $|f|$ is odd.\footnote{Of course, this second set of equations is redundant if $2$ is invertible in $\Gamma(X,\calO_X)$.} (Here $|f|$ is the cohomological degree of the homogeneous element $f$.) This algebra is a sheaf of ($\Gm$-equivariant) dg-algebras in a natural way.

We will use the general convention that we denote similarly a functor between two categories and the induced functor between the opposite categories.


\subsection{acknowledgements}
This article is a sequel to \cite{MR}. This work was started while both authors were members of the Institute for Advanced Study in Princeton, during the Special Year on ``New Connections of Representation Theory to Algebraic Geometry and Physics.''

We thank L.~Positselski for his encouragements and for suggesting to look into \cite{Po} for solutions to our spectral sequence problems.

\section{Linear Koszul duality}
\label{sec:LKD}

\subsection{Nice schemes}
\label{ss:nice-schemes}

Most of our results will be proved under some technical assumptions on our base scheme. To simplify the statements we introduce the following terminology.

\begin{defin}
A scheme is \emph{nice} if it is separated, Noetherian, of finite Krull dimension, and if moreover it admits a dualizing complex in the sense of \cite[p.~258]{H}.
\end{defin}


If $X$ is a nice scheme, we will usually fix a dualizing object and denote it by $\Omega$. See \cite[\S 5.10]{H} for details about this assumption.

If $X$ is a nice scheme, it is proved in \cite{R2} that
for every graded-commutative, non-positively graded $\calO_X$-dg-algebra $\calA$, the category of sheaves of $\calA$-dg-modules has enough $K$-flat and $K$-injective objects in the sense of \cite{Sp}. If $\calA$ is moreover quasi-coherent, it is proved in \cite{BR} that the category of quasi-coherent sheaves of $\calA$-dg-modules also has enough $K$-flat and $K$-injective objects. If $\calA$ is $\Gm$-equivariant, then similar results hold for the categories $\widetilde{\calC}(\calA\Mod)$ and $\calC(\calA\Mod)$. (The case of $\widetilde{\calC}(\calA\Mod)$ is discussed in \cite[\S 1.7]{R2}; the case of $\calC(\calA\Mod)$ follows, using the techniques of \cite[\S 3]{BR}.) It follows that the usual functors (direct and inverse image, tensor product) admit derived functors. Moreover, these functors admit all the usual compatibility properties (see \cite[\S 3]{BR} for precise statements).

Recall finally that under our assumptions the category $\calD(\calA\Mod)$ depends on $\calA$ only up to quasi-isomorphism (see \cite[\S 3.6]{BR}).

\subsection{Koszul duality functors on the level of complexes}
\label{ss:definitions}

Let $(X,\calO_X)$ be a scheme. We consider a finite complex
\[
\calX:= \ \cdots 0 \to \calV^{-n} \to \calV^{-n+1} \to \cdots \to \calV^0 \to 0 \cdots
\]
where $n \geq 0$ and each $\calV^i$ is a locally free $\calO_X$-module of finite rank ($i \in \llbracket -n, 0 \rrbracket$). More precisely, we will consider $\calX$ as a complex of graded $\calO_X$-modules, where each $\calV^i$ is in internal degree $2$. Consider also the complex $\calY$ of graded $\calO_X$-modules which equals $\calX^{\vee}[-1]$ as a bigraded $\calO_X$-module, and where the differential is defined in such a way that
\[
d_{\calY}(y)(v) = (-1)^{|y|} y \bigl( d_{\calX}(v) \bigr)
\]
for $y$ a local section of $\calY$ and $v$ a local section of $\calX$. We define the (graded-commutative) $\Gm$-equivariant dg-algebras
\[
\calT:= \Sym_{\calO_X}(\calX) \quad \text{and} \quad \calS:= \Sym_{\calO_X}(\calY). 
\]
Our goal in this subsection is to construct functors
\[
\scra : \ \calC(\calT\Mod) \to \calC(\calS\Mod), \qquad \scrb : \ \calC(\calS\Mod) \to \calC(\calT\Mod).
\]

Let $\calM$ be in $\calC(\calT\Mod)$. We define $\scra(\calM)$ as follows. As a $\Z^2$-graded $\calO_X$-module, $\scra(\calM)$ equals $\calS \otimes_{\calO_X} \calM$. The $\calS$-action is induced by the left multiplication of $\calS$ on itself, and the differential is the sum of two terms, denoted $d_1$ and $d_2$. First, $d_1$ is the natural differential on the tensor product $\calS \otimes_{\calO_X} \calM$, defined by
\[
d_1(s \otimes m)=d_{\calS}(s) \otimes m + (-1)^{|s|} s \otimes d_{\calM}(m).
\]
Then, consider the natural morphism $i: \calO_X \to \sheafEnd_{\calO_X}(\calX) \cong \calX^\vee \otimes \calX$. The differential $d_2$ is the composition of the morphism
\[
\left\{
\begin{array}{ccc}
\calS \otimes_{\calO_X} \calM & \to & \calS \otimes_{\calO_X} \calM \\
s \otimes m & \mapsto & (-1)^{|s|} s \otimes m
\end{array}
\right.
\] 
followed by the morphism induced by $i$
\[
\calS \otimes_{\calO_X} \calM \to \calS \otimes_{\calO_X} \calX^\vee \otimes_{\calO_X} \calX \otimes_{\calO_X} \calM,
\]
and finally followed by the morphism
\[ 
\calS \otimes_{\calO_X} \calX^\vee \otimes_{\calO_X} \calX \otimes_{\calO_X} \calM \to \calS \otimes_{\calO_X} \calM
\]
induced by the right multiplication $\calS \otimes_{\calO_X} \calX^\vee \to \calS$ and the morphism $\calX \otimes_{\calO_X} \calM \to \calM$ induced by the $\calT$-action. Locally, one can choose a basis $\{x_{\alpha} \}$ of $\calX$ over $\calO_X$, and the dual basis $\{ x_{\alpha}^* \}$ of $\calX^{\vee}$; then $d_2$ can be described as
\[
d_2(s \otimes m) = (-1)^{|s|} \sum_{\alpha} s x_{\alpha}^* \otimes x_{\alpha} \cdot m.
\]
The following lemma is proved by a direct computation left to the reader.

\begin{lem}
\label{lem:definition-A}
These data provide $\scra(\calM)$ the structure of an $\calS$-dg-module.
\end{lem}

The functor $\scrb$ is constructed similarly. For $\calN$ in $\calC(\calS\Mod)$, $\scrb(\calN)$ is equal, as a $\Z^2$-graded $\calO_X$-module, to $\calT^\vee \otimes_{\calO_X} \calN$. The $\calT$-module structure is induced by the $\calT$-action on $\calT^\vee$ defined by $(t \cdot \phi)(t')=(-1)^{|t| \cdot |\phi|}\phi(t \cdot t')$.  And the differential is the sum of $d_1$, which is the differential of the tensor product $\calT^\vee \otimes_{\calO_X} \calN$, and $d_2$, the Koszul differential defined locally by
\[
d_2(\phi \otimes n) = (-1)^{|\phi|} \sum_{\alpha} \phi ( x_{\alpha} \cdot - ) \otimes x_{\alpha}^* \cdot n
\]
where, as above, $\{x_{\alpha} \}$ is a local basis of $\calX$ over $\calO_X$, and $\{ x_{\alpha}^* \}$ is the dual basis (of $\calX^{\vee}$).

The following lemma is proved by a direct computation left to the reader.

\begin{lem}
These data provide $\scrb(\calN)$ the structure of a $\calT$-dg-module.
\end{lem}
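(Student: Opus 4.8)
The plan is to verify that the proposed $\Z^2$-graded $\calO_X$-module $\scrb(\calN) = \calT^\vee \otimes_{\calO_X} \calN$, equipped with the stated $\calT$-action and differential $d = d_1 + d_2$, is a genuine $\calS$-dg-module---equivalently, a $\calT$-dg-module---by checking three things: that $d$ squares to zero, that $d$ has cohomological degree $+1$ and preserves the internal grading, and that $d$ is a derivation for the $\calT$-action (the Leibniz rule $d(t\cdot\xi) = d_\calT(t)\cdot\xi + (-1)^{|t|} t\cdot d(\xi)$). The compatibility of $d$ with the $\Gm$-action (internal grading) is immediate since each constituent morphism ($d_{\calT^\vee}$, $d_\calN$, the structure map $i$, and the multiplications) respects the internal degrees by construction, so I would dispatch it in a sentence.

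First I would address $d^2 = 0$, which is the heart of the matter, by expanding $(d_1 + d_2)^2 = d_1^2 + (d_1 d_2 + d_2 d_1) + d_2^2$. The term $d_1^2 = 0$ because $d_1$ is just the total differential of the tensor product of two complexes (so it reduces to $d_{\calT^\vee}^2 = 0$ and $d_\calN^2 = 0$, together with the graded-commutativity of the Koszul sign). The mixed term $d_1 d_2 + d_2 d_1 = 0$ is where the twist defining the differential $d_\calY$ of $\calY$---namely $d_\calY(y)(v) = (-1)^{|y|} y(d_\calX(v))$---enters: applying $d_1$ after $d_2$ picks up the action of $d_{\calT^\vee}$ on the factor $\phi(x_\alpha \cdot -)$ and of $d_\calN$ on $x_\alpha^* \cdot n$, and these must cancel against $d_2 d_1$ by the Leibniz rule for $d_{\calT^\vee}$ combined with the relation between the $\calS$-action on $\calN$ and the $\calT^\vee$-comodule structure. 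I would carry out this cancellation in a chosen local basis $\{x_\alpha\}$ with dual basis $\{x_\alpha^*\}$, tracking the signs $(-1)^{|\phi|}$ carefully; the key input is that the differential of $\calT^\vee$ is dual (up to the sign twist) to that of $\calT$, matching exactly the twist in $d_\calY$. Finally $d_2^2 = 0$ follows from graded-commutativity of $\calT^\vee$ (so that $\phi(x_\alpha \cdot x_\beta \cdot -)$ is symmetric in $\alpha,\beta$ up to sign) played against the graded-\emph{anti}commutativity forced on the odd-degree generators $x_\alpha^*$ in $\calS$ acting on $\calN$; the symmetric-times-antisymmetric sum over $\alpha,\beta$ vanishes.

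For the derivation property, I would note that $d_1$ is a derivation for the $\calT$-action essentially by construction of the tensor-product differential, so the only content is checking that $d_2$ interacts correctly with the action $(t\cdot\phi)(t') = (-1)^{|t|\cdot|\phi|}\phi(t\cdot t')$; again a short computation in the local basis, using that the extra factor $x_\alpha^* \cdot n$ is untouched by left multiplication on $\calT^\vee$, suffices.

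The main obstacle I expect is bookkeeping: getting every Koszul sign right, in particular the signs $(-1)^{|s|}$ and $(-1)^{|\phi|}$ sprinkled through $d_2$, the sign twist $(-1)^{|y|}$ in the definition of $d_\calY$, and the sign $(-1)^{|t|\cdot|\phi|}$ in the $\calT$-action on $\calT^\vee$. These are precisely tuned so that the cross terms cancel, and a single misplaced sign breaks $d^2 = 0$; so the real work is a disciplined sign audit rather than any conceptual difficulty. Since the statement merely asserts that the data define a $\calT$-dg-module and the construction of $\scrb$ is the formal mirror image of that of $\scra$ (where the analogous lemma was likewise dispatched by direct computation), I would, as the authors do, record that the verification is a routine if sign-heavy calculation and leave the detailed check to the reader.
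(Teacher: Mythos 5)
Your proposal is correct and takes essentially the same approach as the paper: the paper's proof is literally ``a direct computation left to the reader,'' and your outline (splitting $d^2=0$ into $d_1^2$, the cross terms, and $d_2^2$, with the sign twist in $d_\calY$ producing the cancellation of the mixed terms and graded symmetry against antisymmetry killing $d_2^2$, plus the grading and Leibniz checks) is precisely that computation. One slip of the pen: $\scrb(\calN)$ is a $\calT$-dg-module, not ``an $\calS$-dg-module, equivalently a $\calT$-dg-module,'' though the rest of your verification correctly treats the $\calT$-structure throughout.
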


\begin{remark}
The assumption that $\calX$ is concentrated in non-positive degrees is not essential in~\S\S\ref{ss:definitions}--\ref{ss:kd-version3}. It will only simplify the definition of the duality functor in~\S\ref{ss:duality}.
\end{remark}

\subsection{Generalized Koszul complexes}

We consider the \emph{generalized Koszul complexes}
\[
\calK^{(1)}:= \scra(\calT^\vee), \qquad \calK^{(2)}:=\scrb(\calS).
\]
There exists a natural morphism of $\calS$-dg-modules $\calK^{(1)} \to \calO_X$ (projection to $\calO_X$ in bidegree $(0,0)$), and a natural morphism of $\calT$-dg-modules $\calO_X \to \calK^{(2)}$. The following proposition is similar to \cite[Lemma 2.6.1]{MR}.

\begin{prop} \label{prop:Koszulcomplex}
The natural morphism $\calK^{(1)} \to \calO_X$, respectively $\calO_X \to \calK^{(2)}$, is a quasi-isomorphism.
\end{prop}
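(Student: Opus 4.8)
The plan is to exploit the internal $\Z$-grading (the $\Gm$-weight) to cut $\calK^{(1)}$ into bounded pieces, and then to reduce to the case of a complex $\calX$ with zero differential by a filtration argument.

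First I would observe that both differentials $d_1$ and $d_2$ defining $\calK^{(1)}$ preserve the internal grading, so that $\calK^{(1)}$ splits as a direct sum $\bigoplus_{N \geq 0} (\calK^{(1)})_{-2N}$ of subcomplexes indexed by internal degree $-2N$ (recall that $\Sym^p(\calX)$, resp.\ $\Sym^q(\calY)$, lies in internal degree $2p$, resp.\ $-2q$, so only non-positive even internal degrees occur). In internal degree $0$ only the summand $\Sym^0 \otimes (\Sym^0)^\vee = \calO_X$ survives, and the morphism to $\calO_X$ is the identity there; thus it suffices to prove that $(\calK^{(1)})_{-2N}$ is acyclic for $N > 0$. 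Since $p + q = N$ and $\calX$ has finitely many terms, each $(\calK^{(1)})_{-2N}$ is a \emph{bounded} complex of locally free $\calO_X$-modules, so acyclicity may be checked locally on $X$; I may therefore assume $X$ affine and each $\calV^i$ free, with a fixed basis.

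Next I would treat the case where the differential $d_\calX$ (hence also $d_\calY$) vanishes. Then $\calX = \bigoplus_i \calV^i$ splits into one-dimensional pieces $\calO_X \cdot x$, and since $d_1$ and $d_2$ are multiplicative, $\calK^{(1)} = \Sym(\calY) \otimes_{\calO_X} \Sym(\calX)^\vee$ becomes the tensor product over $\calO_X$, over all chosen basis vectors $x$, of the elementary complexes $C_x := \Sym(\calO_X x^*) \otimes_{\calO_X} \Sym(\calO_X x)^\vee$ equipped with $d_2$ alone. Here $x^* \in \calY$ is the variable dual to $x$, of internal degree $-2$ and of cohomological parity opposite to that of $x$, so that exactly one of $\Sym(\calO_X x)$, $\Sym(\calO_X x^*)$ is a polynomial algebra and the other exterior. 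In each internal degree $-2m$ with $m > 0$ the complex $C_x$ reduces to a two-term complex whose differential is the contraction pairing $x^* \otimes x \mapsto 1$, i.e.\ an isomorphism (its matrix entry is a \emph{unit}, not the integer $m$); hence each $C_x$ is quasi-isomorphic to $\calO_X = (C_x)_0$. As every $C_x$ is a bounded-below complex of free $\calO_X$-modules and, in any fixed total internal degree, all but finitely many factors contribute only their degree-$0$ part $\calO_X$, the tensor product $\bigotimes_x C_x$ is quasi-isomorphic to $\bigotimes_x \calO_X = \calO_X$. This settles the split case in arbitrary characteristic.

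Finally I would remove the hypothesis $d_\calX = 0$. Equip $\calX$ with the increasing stupid filtration $F_p \calX := \sigma_{\geq -p}\calX$, a filtration by subcomplexes; since $d_\calX$ sends $\calV^{-p}$ into $\calV^{-p+1} \subset F_{p-1}\calX$, it induces the zero map on $\mathrm{gr}^F \calX = \bigoplus_i \calV^i$. The resulting multiplicative filtration on $\calS$, $\calT^\vee$ and hence on $\calK^{(1)}$ is compatible with both differentials: all of $d_1$ is built from $d_\calX$ and $d_\calY$ and strictly lowers the filtration, whereas $d_2$, being the contraction and independent of $d_\calX$, preserves it. Therefore the associated graded of $(\calK^{(1)})_{-2N}$ is exactly the split Koszul complex of the previous step, acyclic for $N > 0$; as the filtration is finite in each internal degree, the spectral sequence of the filtration converges and its $E_1$-term already vanishes, yielding acyclicity of $(\calK^{(1)})_{-2N}$ for $N > 0$. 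The statement for $\calO_X \to \calK^{(2)}$ follows by the same argument, or by the symmetry $\calY^\vee[-1] \cong \calX$ that exchanges the roles of $\scra$ and $\scrb$.

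The main obstacle is the passage to positive characteristic: the familiar contracting homotopy for a Koszul complex multiplies by the total degree $N$, which need not be invertible. The device above circumvents this by reducing, via the stupid filtration, to the split complex, where in each internal degree the Koszul differential is visibly a unit (the contraction of a dual pair) rather than the scalar $N$; the one point demanding care is the verification that $d_1$ strictly lowers this filtration while $d_2$ preserves it.
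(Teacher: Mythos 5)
Your proof is correct, but it takes a genuinely different route from the paper's. The paper argues by induction on the length $n$ of $\calX$: it peels off the last term $\calV^{-n}$, organizes the four differentials into a double complex whose spectral sequence kills the two differentials induced by $d_{\calX}^{-n}$, and then uses a $K$-flatness argument to identify the remaining complex with the tensor product of the Koszul complex of $(\calV^{-n})^{\vee}$ and the Koszul complex of the truncated $\calX$, the base case $n=0$ being quoted from \cite[\S 2.3]{MR}. You instead kill \emph{all} of $d_1$ in one stroke: the weight filtration (sections of $\calV^{-i}$ in weight $i$, their duals in weight $-i$) makes $d_2$ of weight $0$ and every component of $d_1$ of weight $-1$, so a single finite-filtration spectral sequence reduces everything to the split case, which you then prove from scratch by decomposing locally into rank-one complexes whose differentials in each internal degree are units. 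Your approach buys self-containedness (no reliance on \cite{MR}), no induction, and a transparent explanation of why the argument is characteristic-free (unit matrix entries rather than the multiplication-by-degree homotopy); the paper's approach buys globality (no choice of local bases or reduction to the affine case) and reuse of the already-established classical Koszul complex, at the price of iterating the spectral sequence argument $n$ times. The one point in your write-up that genuinely needs the care you flag is the dual filtration convention: it must be the filtration assigning weight $-i$ to $(\calV^{-i})^{\vee}$, both inside $\calS$ and inside $\calT^{\vee}$, and with that convention the assertions ``$d_2$ preserves, $d_1$ strictly lowers'' do check out, for all four pieces of $d_1$ (the differentials of $\calS$ and of $\calT^{\vee}$ are each induced by $d_{\calX}$ and each lower the weight by exactly $1$).
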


\begin{proof}
The $\calO_X$-dg-modules $\calK^{(1)}$ and $\calK^{(2)}$ are isomorphic under the (graded) exchange of factors in the tensor product. Hence it is sufficient to treat the case of $\calK^{(1)}$. We prove the result by induction on $n$, the case $n=0$ being well known (see \cite[\S 2.3]{MR} and the references therein).

Assume $n>0$, and denote by $\calZ$ the complex
\[
\calZ:= \ \cdots 0 \to \calV_{-n+1} \to \cdots \to \calV_0 \to 0 \cdots
\]
Denote by $\calK^{(1)}_\calZ$ the Koszul complex for $\calZ$; by induction, its cohomology is concentrated in degree $0$, and equals $\calO_X$. To fix notation, assume $n$ is even. Then there is an isomorphism of graded sheaves 
\[
\calK^{(1)} \ \cong \ \bigoplus_{i,j,k} \,\bigwedge \hspace{-2.5pt} {}^i (\calV_{-n})^{\vee} \otimes_{\calO_X}  \bigl( \mathrm{S}^j(\calV_{-n}) \bigr)^{\vee} \otimes_{\calO_X} (\calK_\calZ^{(1)})^k,
\]
where the term $\bigwedge^i(\calV_{-n})^{\vee} \otimes_{\calO_X} \bigl( \mathrm{S}^j(\calV_{-n}) \bigr)^{\vee} \otimes_{\calO_X} (\calK_\calZ^{(1)})^k$ is in degree $k+nj+(n+1)i$. The differential on $\calK^{(1)}$ is the sum of four differentials: $d_1$ induced by the differential of $\calK_\calZ^{(1)}$, $d_2$ induced by the Koszul differential on the Koszul complex $\bigwedge(\calV_{-n})^{\vee} \otimes_{\calO_X} \bigl( \mathrm{S}(\calV_{-n}) \bigr)^{\vee}$, $d_3$ induced by $d_{\calX}^{-n} : \calV_{-n} \to \calV_{-n+1}$ acting in $\calT^\vee$, and $d_4$ induced by $d_{\calX}^{-n}$ acting in $\calS$. The effect of these differentials on the indices can be described as follows (where indices not indicated are fixed):
\begin{multline*}
d_1 : k \mapsto k+1, \quad 
d_2: \left\{ \begin{array}{ccc} i & \mapsto & i+1 \\ j & \mapsto & j-1 \end{array} \right. , \\
d_3: \left\{ \begin{array}{ccc} j & \mapsto & j+1 \\ k & \mapsto & k-n+1 \end{array} \right. , \quad 
d_4: \left\{ \begin{array}{ccc} i & \mapsto & i+1 \\ k & \mapsto & k-n \end{array} \right. .
\end{multline*}
Hence $\calK^{(1)}$ is the total complex of the double complex whose $(p,q)$-term is
\[
\calA^{p,q}:= \bigoplus_{\genfrac{}{}{0pt}{}{p=i+j,}{q=k+ni+(n-1)j}} \bigwedge \hspace{-2.5pt} {}^i (\calV_{-n})^{\vee} \otimes_{\calO_X} \bigl( \mathrm{S}^j(\calV_{-n}) \bigr)^{\vee} \otimes_{\calO_X} (\calK_\calZ^{(1)})^k,
\]
and whose differentials are $d'=d_3 + d_4$, $d''=d_1+d_2$. By definition we have $\calA^{p,q}=0$ if $q<0$. Hence there is a converging spectral sequence 
\[
E_1^{p,q} = \calH^q(\calA^{p,*}, d'') \ \Rightarrow \ \calH^{p+q}(\calK^{(1)})
\]
(see \cite[Proposition 2.2.1$\rmii$]{MR}). It follows that that it suffices to prove that the complex $\calK^{(1)}$, endowed with the differential $d_1+d_2$ (i.e.~the ordinary tensor product of the Koszul complex of $(\calV_{-n})^\vee$ and $\calK_\calZ^{(1)})$, has cohomology $\calO_X$. 

The Koszul complex $\bigwedge(\calV_{-n})^{\vee} \otimes_{\calO_X} \bigl( \mathrm{S}(\calV_{-n}) \bigr)^{\vee}$ is a $K$-flat object of $\calC(\calO_X\Mod)$ in the sense of \cite{Sp} (since each of its internal degree components is a bounded complex of flat $\calO_X$-modules). It follows that the morphism
\[
\Bigl( \bigwedge(\calV_{-n})^{\vee} \otimes_{\calO_X} \bigl( \mathrm{S}(\calV_{-n}) \bigr)^{\vee} \Bigr) \otimes_{\calO_X} \calK_\calZ^{(1)} \to \Bigl( \bigwedge(\calV_{-n})^{\vee} \otimes_{\calO_X} \bigl( \mathrm{S}(\calV_{-n}) \bigr)^{\vee} \Bigr) \otimes_{\calO_X} \calO_X
\]
is a quasi-isomorphism. The claim follows, since the Koszul complex of $(\calV_{-n})^\vee$ has cohomology $\calO_X$ (see again \cite[\S 2.3]{MR}).
\end{proof}

\subsection{Covariant linear Koszul duality}
\label{ss:KD1}

We denote by $\calC(\calT\Mod_-)$ the full subcategory of $\calC(\calT\Mod)$ whose objects have their internal degree which is bounded above (uniformly in the cohomological degree), and by $\calD(\calT\Mod_-)$ the corresponding derived category. We define similarly $\calC(\calS\Mod_-)$ and $\calD(\calS\Mod_-)$. Note that $\calT^\vee$ and $\calS$ are both concentrated in non-positive internal degrees. In particular, they are objects of the categories just defined. Note also that the natural functors 
\[
\calD(\calT\Mod_-) \to \calD(\calT\Mod) \quad \text{and} \quad \calD(\calS\Mod_-) \to \calD(\calS\Mod)
\]
are fully faithful, with essential images the subcategories of dg-modules whose cohomology is bounded above for the internal degree (uniformly in the cohomological degree).

The first (covariant) version of our linear Koszul duality is the following theorem.

\begin{thm}
\label{thm:Koszuldualitycomodules}
\begin{enumerate}
\item The functors
\[
\scra: \calC(\calT\Mod_-) \to \calC(\calS\Mod_-), \qquad \scrb : \calC(\calS\Mod_-) \to \calC(\calT\Mod_-)
\]
are exact, hence induce functors
\[
\overline{\scra}: \calD(\calT\Mod_-) \to \calD(\calS\Mod_-), \qquad \overline{\scrb} : \calD(\calS\Mod_-) \to \calD(\calT\Mod_-).
\]
\item The functors $\overline{\scra}$ and $\overline{\scrb}$
are equivalences of triangulated categories, quasi-inverse to each other.
\end{enumerate}
\end{thm}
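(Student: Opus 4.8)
The plan is to establish part (1) by a direct inspection of the construction, and then reduce part (2) to the generalized Koszul complex statement of Proposition~\ref{prop:Koszulcomplex} via an explicit computation of the composite functors $\overline{\scrb} \circ \overline{\scra}$ and $\overline{\scra} \circ \overline{\scrb}$.

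For part (1), the key observation is that $\scra(\calM) = \calS \otimes_{\calO_X} \calM$ and $\scrb(\calN) = \calT^\vee \otimes_{\calO_X} \calN$ as $\Z^2$-graded $\calO_X$-modules, with differentials that are $\calO_X$-linear maps assembled from $d_1$ (a tensor-product differential) and $d_2$ (the Koszul/contraction term). Since $\calS$ and $\calT^\vee$ are locally free over $\calO_X$ in each bidegree, the underlying functor $\calM \mapsto \calS \otimes_{\calO_X} \calM$ preserves short exact sequences of dg-modules and sends acyclic complexes to acyclic complexes (the $d_2$ term does not affect acyclicity because one can filter by the internal degree and reduce to the $d_1$ differential, where flatness of $\calS$ applies). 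First I would verify that $\scra$ and $\scrb$ send $\calC(\calT\Mod_-)$ into $\calC(\calS\Mod_-)$ and conversely: this is where the boundedness-above condition on the internal degree is essential, since $\calS$ and $\calT^\vee$ are concentrated in non-positive internal degrees, so tensoring preserves the property of being bounded above internally (uniformly in the cohomological degree). Exactness then gives the induced functors $\overline{\scra}$ and $\overline{\scrb}$ on the derived categories.

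For part (2), I would compute the composite $\overline{\scrb} \circ \overline{\scra}$. By construction, for $\calM$ in $\calC(\calT\Mod_-)$ one has $\scrb(\scra(\calM)) = \calT^\vee \otimes_{\calO_X} \calS \otimes_{\calO_X} \calM$ as a bigraded $\calO_X$-module. The crucial point is that the generalized Koszul complex $\calK^{(2)} = \scrb(\calS)$ appears here as a factor: there should be a natural morphism of $\calT$-dg-modules relating $\scrb(\scra(\calM))$ to $\calK^{(2)} \otimes_{\calO_X} \calM$, or more precisely a filtration whose associated graded identifies the composite with $\calK^{(2)} \otimes_{\calT} (\calT \otimes_{\calO_X} \calM) \cong \calK^{(2)} \otimes_{\calO_X} \calM$ up to the module structure bookkeeping. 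Since Proposition~\ref{prop:Koszulcomplex} asserts that $\calO_X \to \calK^{(2)}$ is a quasi-isomorphism, and since $\calM$ is $K$-flat-friendly in the relevant internal-degree-bounded sense, tensoring the quasi-isomorphism $\calO_X \to \calK^{(2)}$ with $\calM$ yields a quasi-isomorphism from $\calM$ to $\scrb(\scra(\calM))$. The symmetric argument using $\calK^{(1)} = \scra(\calT^\vee)$ and the quasi-isomorphism $\calK^{(1)} \to \calO_X$ handles $\overline{\scra} \circ \overline{\scrb} \cong \Id$.

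The hard part will be making the identification of the composite functor with tensoring by the Koszul complex fully natural and compatible with differentials, rather than merely an isomorphism of underlying graded sheaves. One must track the interaction of the two $d_2$-type Koszul differentials (one coming from $\scra$, one from $\scrb$) and check that the total differential on $\calT^\vee \otimes_{\calO_X} \calS \otimes_{\calO_X} \calM$ matches, via the exchange-of-factors isomorphism used in the proof of Proposition~\ref{prop:Koszulcomplex}, the differential of $\calK^{(2)} \otimes_{\calO_X} \calM$ with the Koszul contraction acting compatibly through the $\calT$-action on $\calM$. The sign conventions in the definitions of $\scra$, $\scrb$, and the pairing $d_{\calY}(y)(v) = (-1)^{|y|} y(d_{\calX}(v))$ are precisely calibrated so that the cross terms cancel and only the Koszul differential survives; verifying this cancellation, together with confirming that the resulting adjunction/unit morphisms $\calM \to \scrb \scra(\calM)$ and $\scra \scrb(\calN) \to \calN$ are the natural ones induced by $\calO_X \to \calK^{(2)}$ and $\calK^{(1)} \to \calO_X$, is the technical core of the argument.
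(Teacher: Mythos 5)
Your proposal is correct and follows essentially the same route as the paper's proof: exactness is obtained by the finite filtration by internal degree (using that $\calM$ is bounded above internally and $\calS$, $\calT^\vee$ are internally non-positive), reducing to ordinary tensor products with bounded complexes of flat $\calO_X$-modules, and the equivalence is obtained from the adjunction morphisms, whose cones are identified via the same filtration with tensor products against the generalized Koszul complexes so that Proposition~\ref{prop:Koszulcomplex} applies. The only cosmetic differences are that the paper spells out the filtration as an iterated cone construction and verifies the counit $\overline{\scra} \circ \overline{\scrb} \to \Id$ (with the unit ``similar''), whereas you treat the unit; also note that the flatness used in the final step is that of the internal components of the Koszul complex, not of $\calM$.
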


\begin{proof}
The idea of the proof is taken from \cite[proof of Theorem~A.1.2]{Po}.

$\rmi$ We prove that $\scra$ sends acyclic $\calT$-dg-modules in $\calC(\calT\Mod_-)$ to acyclic $\calS$-dg-modules; the proof for $\scrb$ is similar. Let $\calM$ be an acyclic object of $\calC(\calT\Mod_-)$. To fix notation, assume that $\calM_n=0$ for $n>0$. Then for any $m \in \Z$ we have
\begin{equation*}
\scra(\calM)_m=\bigoplus_{\genfrac{}{}{0pt}{}{i \leq 0, j \leq 0,}{m=i+j}} \calS_i \otimes_{\calO_X} \calM_j.
\end{equation*}
In particular, $\scra(\calM)_m=0$ unless $m \leq 0$.

For $m \leq 0$, the complex
$\scra(\calM)_m$ can be obtained from the finite collection of complexes $\calS_i \otimes_{\calO_X} \calM_j$ with $i,j \leq 0$ and $i+j=m$ (with their natural differential as a tensor product of complexes) by taking shifts and cones. More precisely, this process works as follows, e.g.~when $m$ is even: start with $\calS_m \otimes \calM_0$; Lemma~\ref{lem:definition-A} implies that the Koszul differential (denoted $d_2$ in~\S\ref{ss:definitions}) defines a morphism of complexes $\calS_{m+2} \otimes \calM_{-2}[-1] \to \calS_m \otimes \calM_0$; take its cone $\calL$; then similarly the Koszul differential defines a morphism of complexes $\calS_{m+4} \otimes \calM_{-4}[-1] \to \calL$; take its cone; and continue until $\calS_0 \otimes \calM_m$ is reached.


As $\calM_j$ is acyclic for any $j$, and as the tensor product of an acyclic complex with a bounded above complex of flat modules is acyclic, the complexes $\calS_i \otimes_{\calO_X} \calM_j$ are acyclic. Then,
as the cone of a morphism between acyclic complexes is acyclic, we deduce from the above observation that $\scra(\calM)_m$ is acyclic for any $m$, and finally that $\scra(\calM)$ is acyclic.

$\rmii$ The functors $\scra$ and $\scrb$ are clearly adjoint; in particular we have adjunction morphisms $\scra \circ \scrb \to \Id$ and $\Id \to \scrb \circ \scra$, hence similar morphisms for the induced functors $\overline{\scra}$ and $\overline{\scrb}$. We show that the morphism $\overline{\scra} \circ \overline{\scrb} \to \Id$ is an isomorphism; the proof for the morphism $\Id \to \overline{\scrb} \circ \overline{\scra}$ is similar. Let $\calN$ be an object of $\calC(\calS\Mod_-)$. By a construction similar to that of $\rmi$, the homogeneous internal degree components of the cone of the morphism $\scra \circ \scrb (\calN) \to \calN$ can be obtained from the negative homogeneous internal degree components of $\calK^{(1)}$ by tensoring with some homogeneous internal degree components of $\calM$ and taking shifts and cones a finite number of times. Hence, using Proposition~\ref{prop:Koszulcomplex}, it suffices to observe that the tensor product of an acyclic, bounded above complex of flat $\calO_X$-modules with any complex of $\calO_X$-modules is acyclic (see e.g.~\cite[Proposition 5.7]{Sp}).
\end{proof}

\begin{remark}
\begin{enumerate}
\item
So far, we have not used the condition that our dg-modules 
are quasi-coherent over $\calO_X$, or that $(X,\calO_X)$ is a scheme. In fact, Theorem~\ref{thm:Koszuldualitycomodules} is true for any commutative ringed space $(X,\calO_X)$ and the whole categories of $\calO_X$-dg-modules over $\calT$ and $\calS$ (with the prescribed condition on the grading). Our assumption will be used in \S \ref{ss:KD-v2} below.
\item
One can easily check using Proposition~\ref{prop:Koszulcomplex} that the composition of $\overline{\scra}$ with the forgetful functor $\calD(\calS\Mod_-) \to \calD(\calO_X\Mod)$ is isomorphic to $R\sheafHom_{\calT}(\calO_X,-)$, i.e.~to the right derived functor of the functor $\calM \mapsto \sheafHom_{\calT}(\calO_X, \calM)$. Similarly the composition of $\overline{\scrb}$ with the forgetful functor can be identified with $\calO_X \lotimes_{\calS} (-)$.
\end{enumerate}
\end{remark}

\subsection{Reminder on Grothendieck--Serre duality}
\label{ss:duality}

From now on we assume that $X$ is a nice scheme. Let $\Omega \in \calD^b \Coh(X)$ be a dualizing object for $X$. We choose a bounded below complex $\calI_\Omega$ of injective quasi-coherent sheaves on $X$ whose image in $\calD^b \Coh(X)$ is $\Omega$. Recall that the components of $\calI_\Omega$ are also injective in the category $\Sh(X)$. (This follows from \cite[Theorem II.7.18]{H}.) 

The functor
\[
\sheafHom_{\calO_X}(-,\calI_\Omega) : \calC \Sh(X) \to \calC \Sh(X)^\op
\]
is exact, hence induces a functor
\[
\sheafHom_{\calO_X}(-,\calI_\Omega) : \calD \Sh(X) \to \calD \Sh(X)^\op.
\]
It is well known that under our assumptions the natural functor
\begin{equation}
\label{eqn:qcoh-sh}
\calD \QCoh(X) \to \calD \Sh(X)
\end{equation}
is fully faithful (see e.g.~\cite[Proposition 3.1.3]{BR} and the references therein), as well as the natural functor
\begin{equation}
\label{eqn:coh-qcoh}
\calD^b \Coh(X) \to \calD \QCoh(X).
\end{equation}
Hence we can consider the category $\calD^b \Coh(X)$ as a subcategory of  $\calD \Sh(X)$, hence obtain a functor
\[
\sheafHom_{\calO_X}(-,\calI_\Omega) : \calD^b \Coh(X) \to \calD \Sh(X)^\op.
\]
It is explained in \cite[p.~257]{H} that this functor factors through a functor
\[
\mathrm{D}_\Omega : \calD^b \Coh(X) \to \calD^b \Coh(X)^\op.
\]
Using again the fully faithfulness of \eqref{eqn:qcoh-sh} and \eqref{eqn:coh-qcoh}, it is easy to construct a morphism $\varepsilon_\Omega : \Id \to  \mathrm{D}_\Omega \circ \mathrm{D}_\Omega$ of endofunctors of the category $\calD^b \Coh(X)$. The fact that $\Omega$ is a dualizing complex implies that $\varepsilon_\Omega$ is an isomorphism of functors (see \cite[Proposition V.2.1]{H}). In particular, $\mathrm{D}_\Omega$ is an equivalence of categories.

Now we let $\calA$ be a $\Gm$-equivariant, non-positively (cohomologically) graded, graded-commu\-tative sheaf of $\calO_X$-dg-algebras on $X$. We denote by $\calD^\bc (\calA\Mod)$ the subcategory of $\calD(\calA\Mod)$ with objects the dg-modules $\calM$ such that for any $j \in \Z$ the object $\calM_j$ of $\calD\QCoh(X)$ has bounded and coherent cohomology. Our goal now is to explain the construction of an equivalence
\[
\mathrm{D}_\Omega^\calA : \calD^\bc (\calA\Mod) \xrightarrow{\sim} \calD^\bc (\calA\Mod)^\op
\]
which is compatible with $\mathrm{D}_\Omega$ in the natural sense. 
First, there exists a natural functor
\[
{}^0 \widetilde{\mathrm{D}}_\Omega^\calA : \widetilde{\calC}(\calA\Mod) \to \widetilde{\calC}(\calA\Mod)^\op
\]
which sends a dg-module $\calM$ to the dg-module whose underlying $\Gm$-equivariant $\calO_X$-dg-module is $\sheafHom_{\calO_X}(\calM,\calI_\Omega)$, with $\calA$-action defined by
\[
(a \cdot \phi)(m) = (-1)^{|a| \cdot |\phi|} \phi(a \cdot m)
\]
for $a$ a local section of $\calA$, $\phi$ a local section of $\sheafHom_{\calO_X}(\calM,\calI_\Omega)$, and $m$ a local section of $\calM$. This functor is exact, hence induces a functor
\[
\widetilde{\mathrm{D}}_\Omega^\calA : \widetilde{\calD}(\calA\Mod) \to \widetilde{\calD}(\calA\Mod)^\op.
\]
Moreover, it is easy to construct a morphism $\widetilde{\varepsilon}_\Omega^\calA : \Id \to \widetilde{\mathrm{D}}_\Omega^\calA \circ \widetilde{\mathrm{D}}_\Omega^\calA$ of endofunctors of $\widetilde{\calD}(\calA\Mod)$. Now by \cite[Proposition 3.3.2]{BR} the natural functor $\calD(\calA\Mod) \to \widetilde{\calD}(\calA\Mod)$ is fully faithful, hence so is also the natural functor $\calD^\bc(\calA\Mod) \to \widetilde{\calD}(\calA\Mod)$. Moreover, it is easy to prove using the case of $\calO_X$ considered above that $\widetilde{\mathrm{D}}_\Omega^\calA$ factors through a functor
\[
\mathrm{D}_\Omega^\calA : \calD^\bc (\calA\Mod) \to \calD^\bc (\calA\Mod)^\op.
\]
Using fully faithfulness again the morphism $\widetilde{\varepsilon}_\Omega^\calA$ induces a morphism $\varepsilon_\Omega^\calA : \Id \to \mathrm{D}_\Omega^\calA \circ \mathrm{D}_\Omega^\calA$ of endofunctors of $\calD^\bc(\calA\Mod)$. Finally, using again the case of $\calO_X$ one can check that $\varepsilon_\Omega^\calA$ is an isomorphism of functors, proving in particular that $\mathrm{D}_\Omega^\calA$ is an equivalence.

\subsection{Contravariant linear Koszul duality}
\label{ss:KD-v2}

As in \S\ref{ss:duality} we assume that $X$ is nice. Using the same conventions as in \S\ref{ss:duality}, we denote by $\calD^\bc(\calT\Mod_-)$ the subcategory of the category $\calD(\calT\Mod_-)$ whose objects are the dg-modules $\calM$ such that, for any $j \in \Z$, $\calH^\bullet(\calM_j)$ is bounded and coherent. We define similarly the categories $\calD^\bc(\calS\Mod_-)$ (replacing $\calT$ by $\calS$) and $\calD^\bc(\calT\Mod_+)$ (replacing ``above'' by ``below'').

Clearly, the functors $\overline{\scra}$ and $\overline{\scrb}$ restrict to equivalences
\[
\overline{\scra}^\bc: \calD^\bc(\calT\Mod_-) \xrightarrow{\sim} \calD^\bc(\calS\Mod_-), \qquad \overline{\scrb}^\bc : \calD^\bc(\calS\Mod_-) \xrightarrow{\sim} \calD^\bc(\calT\Mod_-)
\]
(see e.g.~the proof of Theorem~\ref{thm:Koszuldualitycomodules}). On the other hand, in \S\ref{ss:duality} we have defined an equivalence $\mathrm{D}_\Omega^\calT$, which induces an equivalence of categories
\[
\mathbf{D}_\Omega^\calT : \calD^\bc(\calT\Mod_+) \ \xrightarrow{\sim} \ \calD^\bc(\calT\Mod_-)^\op.
\]
Composing these equivalences we obtain the following result, which is the second (contravariant) version of our linear Koszul duality.

\begin{thm}
\label{thm:equiv}
Let $X$ be a nice scheme with dualizing complex $\Omega$. Then the composition $\overline{\scra}^{\bc} \circ \mathbf{D}_{\Omega}^\calT$ gives an equivalence of triangulated categories
\[
K_{\Omega} :  \calD^{\bc}(\calT\Mod_+) \ \xrightarrow{\sim} \ \calD^{\bc}(\calS\Mod_-)^\op,
\]
which satisfies $K_{\Omega}(\calM[n] \langle m \rangle) = K_{\Omega}(\calM)[-n]\langle -m \rangle$.
\end{thm}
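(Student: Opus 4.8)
The plan is to treat the statement as a formal consequence of the two equivalences already established, the only genuine work being the bookkeeping of the two gradings. First I would record that each of the two composed functors is an equivalence: the functor $\overline{\scra}^\bc : \calD^\bc(\calT\Mod_-) \xrightarrow{\sim} \calD^\bc(\calS\Mod_-)$ is an equivalence by the restriction of Theorem~\ref{thm:Koszuldualitycomodules} to the bounded-coherent subcategories (exactly as recorded immediately before the statement), with quasi-inverse $\overline{\scrb}^\bc$; and $\mathbf{D}_\Omega^\calT$ is an equivalence because it is induced by the Grothendieck--Serre duality equivalence $\mathrm{D}_\Omega^\calT$ constructed in \S\ref{ss:duality}. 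It then remains to check that the two are composable and to verify the shift identity.

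Next I would check composability. The operation underlying $\mathbf{D}_\Omega^\calT$ is $\sheafHom_{\calO_X}(-,\calI_\Omega)$, and since $\calI_\Omega$ is concentrated in internal degree $0$, dualizing reverses the internal grading; hence an object of $\calD^\bc(\calT\Mod_+)$ (internal degree bounded below) is carried to one whose internal degree is bounded above, so that $\mathbf{D}_\Omega^\calT$ indeed lands in $\calD^\bc(\calT\Mod_-)^\op$. Applying to it the functor induced by $\overline{\scra}^\bc$ on the opposite categories, namely $\calD^\bc(\calT\Mod_-)^\op \to \calD^\bc(\calS\Mod_-)^\op$ (following the paper's convention for functors on opposite categories), exhibits $K_\Omega = \overline{\scra}^\bc \circ \mathbf{D}_\Omega^\calT$ as a composition of two equivalences, hence an equivalence.

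Finally, for the shift identity I would verify the two relevant compatibilities at the level of underlying objects. On one side, $\overline{\scra}^\bc$ is triangulated, so it commutes with $[1]$; and since $\scra(\calM) = \calS \otimes_{\calO_X} \calM$ with the internal grading of a tensor product additive, $\scra$ also commutes with $\langle 1 \rangle$. On the other side, $\sheafHom_{\calO_X}(-,\calI_\Omega)$ reverses both gradings, sending $[n]$ to $[-n]$ and (as $\calI_\Omega$ lies in internal degree $0$) sending $\langle m \rangle$ to $\langle -m \rangle$, so that $\mathbf{D}_\Omega^\calT(\calM[n]\langle m \rangle) = \mathbf{D}_\Omega^\calT(\calM)[-n]\langle -m \rangle$. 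Combining the two gives $K_\Omega(\calM[n]\langle m \rangle) = K_\Omega(\calM)[-n]\langle -m \rangle$, as claimed.

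The main obstacle here is not any substantive argument — all the hard content already resides in Theorem~\ref{thm:Koszuldualitycomodules} and in the construction of \S\ref{ss:duality} — but the consistent handling of conventions: tracking both the cohomological and the internal grading through a contravariant duality and through the opposite-category shift convention, so that the signs in the displayed formula come out correctly and the $\Mod_+$ versus $\Mod_-$ conditions match up under the duality. Once the two elementary facts (that $\scra$ preserves both shifts, and that the duality reverses both) are pinned down, the theorem follows immediately.
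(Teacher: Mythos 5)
Your proposal is correct and takes essentially the same route as the paper: the paper likewise obtains $K_\Omega$ by restricting $\overline{\scra}$ and $\overline{\scrb}$ to the bounded-coherent subcategories (via the argument in the proof of Theorem~\ref{thm:Koszuldualitycomodules}), observing that $\mathrm{D}_\Omega^\calT$ induces an equivalence $\mathbf{D}_\Omega^\calT : \calD^{\bc}(\calT\Mod_+) \xrightarrow{\sim} \calD^{\bc}(\calT\Mod_-)^\op$ because dualizing against $\calI_\Omega$ reverses the internal grading, and then composing the two equivalences. Your explicit bookkeeping of the shift identity $K_\Omega(\calM[n]\langle m\rangle) = K_\Omega(\calM)[-n]\langle -m\rangle$ is exactly the content the paper leaves implicit.
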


\begin{remark}
\label{rk:thm-equiv}
\begin{enumerate}
\item
The equivalence $K_{\Omega}$ only depends, up to isomorphism, on the dualizing complex $\Omega \in \calD^b \Coh(X)$, and not on the injective resolution $\calI_{\Omega}$. This justifies the notation.
\item
\label{it:description-K}
One can describe the equivalence $K_{\Omega}$ very explicitly. Namely, if $\calM$ is an object of the category $\calD^\bc(\calT\Mod_+)$, the image under $K_{\Omega}$ of $\calM$ is, as an $\calS$-dg-module, the image in the derived category of the complex
\[
\calS \otimes_{\calO_X} \sheafHom_{\calO_X}(\calM,\calI_{\Omega}),
\]
where the differential is the sum of a Koszul-type differential and of the usual differential on the tensor product $\calS \otimes \sheafHom(\calM,\calI_{\Omega})$. (Note that this dg-module might not be quasi-coherent.)
\item 
The covariant Koszul duality $\overline{\scra}$ may seem more appealing than the contravariant duality $K_{\Omega}$. However, the category $\calD(\calT\Mod_-)$ is not very interesting since it does not contain the free module $\calT$ in general. In particular, the equivalence $\overline{\scra}$ will not yield an equivalence for locally finitely generated dg-modules, contrary to $K_{\Omega}$ (see Proposition~\ref{prop:restriction-kappa} below). This equivalence will be interesting, however, if $\calX$ is concentrated in odd cohomological degrees. In this case, the equivalence obtained is essentially that of \cite[Theorem 8.4]{GKM} (see also \cite[Theorem 2.1.1]{R2}).
\end{enumerate}
\end{remark}

\subsection{Regraded contravariant linear Koszul duality}
\label{ss:kd-version3}

Consider the $\Gm$-equivariant dg-algebra
\[
\calR:=\Sym(\calY[2]).
\]
There is a ``regrading'' equivalence of categories
\[
\xi : \calC(\calS\Mod) \xrightarrow{\sim} \calC(\calR\Mod),
\]
which sends the $\calS$-dg-module $\calN$ to the $\calR$-dg-module with $(i,j)$-component $\xi(\calM)^i_j:=\calM^{i-j}_j$. (If one forgets the gradings, the dg-algebras $\calR$ and $\calS$ coincide, as well as $\calM$ and $\xi(\calM)$. Then the $\calR$-action and the differential on $\xi(\calM)$ are the same as the $\calS$-action and the differential on $\calM$.) Composing the equivalence of Theorem~\ref{thm:equiv} with $\xi$ we obtain the third version of our linear Koszul duality, which is the one we will use.

\begin{thm}
\label{thm:equiv2}
Let $X$ be a nice scheme with dualizing complex $\Omega$. Then the composition $\xi \circ K_{\Omega}$ gives an equivalence of triangulated categories
\[
\kappa_{\Omega} :  \calD^{\bc}(\calT\Mod_+) \ \xrightarrow{\sim} \ \calD^{\bc}(\calR\Mod_-)^\op,
\]
which satisfies $\kappa_{\Omega}(\calM[n] \langle m \rangle) = \kappa_{\Omega}(\calM)[-n+m]\langle -m \rangle$.
\end{thm}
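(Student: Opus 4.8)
The plan is to regard $\kappa_\Omega = \xi \circ K_\Omega$ as a composite in which one factor, $K_\Omega$, is already known to be an equivalence by Theorem~\ref{thm:equiv}. Thus the statement reduces to two essentially formal tasks: first, to check that the regrading $\xi$ induces a triangulated equivalence $\calD^\bc(\calS\Mod_-)^\op \xrightarrow{\sim} \calD^\bc(\calR\Mod_-)^\op$; and second, to combine the shift-and-twist formula for $K_\Omega$ recorded in Theorem~\ref{thm:equiv} with the way $\xi$ interacts with the two gradings. There is no new geometric or homological content beyond Theorem~\ref{thm:equiv}; the whole argument is bookkeeping of the bigrading.

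For the first task I would argue as follows. Since $\calR$ and $\calS$ agree as $\Gm$-equivariant dg-algebras once the bigrading is forgotten, and $\calN$ and $\xi(\calN)$ likewise agree as underlying dg-modules (with the same action and the same differential), the functor $\xi \colon \calC(\calS\Mod) \to \calC(\calR\Mod)$ is an isomorphism of categories. In particular it is exact, and preserves quasi-isomorphisms, cones and the cohomological shift, so it descends to a triangulated equivalence of derived categories. It then remains to verify that $\xi$ respects the two conditions defining the subcategories in play. As $\xi$ leaves the internal grading unchanged, it sends $\calC(\calS\Mod_-)$ into $\calC(\calR\Mod_-)$ (boundedness above in the internal degree is untouched). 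Moreover the defining formula $\xi(\calN)^i_j = \calN^{i-j}_j$ shows that $\xi(\calN)_j = \calN_j[-j]$ for every $j$, so the condition defining $\calD^\bc$ (each internal component of bounded coherent cohomology) is preserved as well. Hence $\xi$ restricts to an equivalence $\calD^\bc(\calS\Mod_-) \xrightarrow{\sim} \calD^\bc(\calR\Mod_-)$, and passing to opposite categories yields the equivalence that we postcompose with $K_\Omega$ to define $\kappa_\Omega$.

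For the second task I would compute directly from $\xi(\calN)^i_j = \calN^{i-j}_j$ how $\xi$ transforms the shift $[n]$ and the internal twist $\langle m \rangle$. A bidegree chase gives
\[
\xi(\calN[n]\langle m \rangle)^i_j = (\calN[n]\langle m \rangle)^{i-j}_j = \calN^{\,i-j+n}_{\,j-m} = \bigl(\xi(\calN)[n-m]\langle m \rangle\bigr)^i_j,
\]
whence $\xi(\calN[n]\langle m \rangle) = \xi(\calN)[n-m]\langle m \rangle$; in other words $\xi$ commutes with the cohomological shift but converts $\langle m \rangle$ into $[-m]\langle m \rangle$, the correction being exactly the entanglement of the two gradings built into the regrading. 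Combining this with the identity $K_\Omega(\calM[n]\langle m \rangle) = K_\Omega(\calM)[-n]\langle -m \rangle$ from Theorem~\ref{thm:equiv} gives
\[
\kappa_\Omega(\calM[n]\langle m \rangle) = \xi\bigl(K_\Omega(\calM)[-n]\langle -m \rangle\bigr) = \xi(K_\Omega(\calM))[-n+m]\langle -m \rangle = \kappa_\Omega(\calM)[-n+m]\langle -m \rangle,
\]
which is the asserted identity.

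The only point demanding genuine care — and the one I would expect to be the main obstacle — is the sign and index bookkeeping in the displays above, in particular pinning down the conventions for the shift and twist functors on the opposite categories $\calD^\bc(\calS\Mod_-)^\op$ and $\calD^\bc(\calR\Mod_-)^\op$ (these being the functors induced on opposite categories, per our standing convention). Once those conventions are fixed the two computations combine as shown, and no further difficulty arises.
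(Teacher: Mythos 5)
Your proposal is correct and follows the same route as the paper: the paper obtains Theorem~\ref{thm:equiv2} precisely by observing that the regrading $\xi$ is an (exact, grading-preserving-in-the-internal-variable) equivalence of categories and composing it with the equivalence $K_\Omega$ of Theorem~\ref{thm:equiv}, the shift/twist formula being the same bidegree bookkeeping you carried out. Your verifications that $\xi$ preserves the defining conditions of $\calD^\bc(\calS\Mod_-)$ (via $\xi(\calN)_j = \calN_j[-j]$) and the identity $\xi(\calN[n]\langle m\rangle) = \xi(\calN)[n-m]\langle m\rangle$ are exactly the details the paper leaves implicit.
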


\subsection{Finiteness conditions}
\label{ss:finiteness}

From now on, for simplicity we assume that $n \leq 1$.

We will now consider subcategories of locally finitely generated $\calT$- and $\calR$-dg-modules. More precisely, 
we define $\calD^{\fg}(\calT\Mod)$, respectively $\calD^{\fg}(\calT\Mod_+)$, as the subcategory of $\calD(\calT\Mod)$, respectively $\calD(\calT\Mod_+)$, whose objects are the dg-modules whose cohomology is locally finitely generated over the cohomology $\calH^{\bullet}(\calT)$. We use similar notation for $\calR$-dg-modules. It is easily checked that the natural functors induce equivalences
\begin{equation}
\label{eqn:equiv-fg}
\calD^{\fg}(\calT\Mod_+) \xrightarrow{\sim} \calD^{\fg}(\calT\Mod), \qquad \calD^{\fg}(\calR\Mod_-) \xrightarrow{\sim} \calD^{\fg}(\calR\Mod).
\end{equation}

%
%
%


\begin{prop} \label{prop:restriction-kappa}
Assume $X$ is a nice scheme with dualizing complex $\Omega$, and that $n \leq 1$. Then $\kappa_{\Omega}$ restricts to an equivalence of triangulated categories
\[
\calD^{\fg}(\calT\Mod_+) \ \xrightarrow{\sim} \ \calD^{\fg}(\calR\Mod_-)^\op. 
\]
\end{prop}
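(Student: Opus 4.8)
The plan is to show that the equivalence $\kappa_\Omega$ restricts to the indicated subcategories by verifying that, on each side, the ``finitely generated cohomology'' condition corresponds under $\kappa_\Omega$ to the ``bounded coherent in each internal degree'' condition combined with the relevant boundedness of the internal grading. Concretely, $\kappa_\Omega$ is already known (Theorem~\ref{thm:equiv2}) to be an equivalence $\calD^\bc(\calT\Mod_+) \xrightarrow{\sim} \calD^\bc(\calR\Mod_-)^\op$, so it suffices to check that an object $\calM$ of $\calD^\bc(\calT\Mod_+)$ lies in $\calD^{\fg}(\calT\Mod_+)$ if and only if $\kappa_\Omega(\calM)$ lies in $\calD^{\fg}(\calR\Mod_-)$. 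By Lemma~\ref{lem:fg} (together with the following remark, which reduces everything to the affine case) this is a purely local statement, so I would fix an affine open and work with honest dg-modules over the dg-algebras $\calT$ and $\calR$ of sections.

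First I would use the explicit description of $\kappa_\Omega$ obtained by composing Remark~\ref{rk:thm-equiv}(2) with the regrading $\xi$: up to quasi-isomorphism, $\kappa_\Omega(\calM)$ is the regraded version of $\calS \otimes_{\calO_X} \sheafHom_{\calO_X}(\calM, \calI_\Omega)$ with the Koszul-plus-tensor differential. Since we are reduced to $n \leq 1$ and to the affine/local setting, $\calT$ has cohomology concentrated in finitely many internal degrees with coherent components, and $\calH^\bullet(\calT)$ is a Noetherian (bi)graded ring. The key observation is that being finitely generated over $\calH^\bullet(\calT)$ is, in this bounded setting, equivalent to the conjunction of two conditions that are visibly preserved by $\kappa_\Omega$: coherence of the cohomology in each internal degree (which is exactly the $\bc$ condition, preserved by construction) together with the appropriate finiteness across internal degrees. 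The regrading $\xi$ converts the internal boundedness, so that ``bounded above internally with coherent pieces and finitely generated over $\calH^\bullet(\calT)$'' matches ``bounded below internally with coherent pieces and finitely generated over $\calH^\bullet(\calR)$'' on the other side.

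Thus the steps in order are: (1) invoke Theorem~\ref{thm:equiv2} to get the equivalence on the $\bc$ categories and reduce the proposition to the statement that $\kappa_\Omega$ and its quasi-inverse preserve local finite generation of cohomology; (2) use Lemma~\ref{lem:fg} and the subsequent remark to reduce to $X$ affine, so that $\calD\calF\calG(\calT) \cong \calD^{\fg}(\calT\Mod_+)$ and likewise for $\calR$, and in particular represent any finitely generated object by a genuine object of $\calC\calF\calG$; (3) compute $\kappa_\Omega$ on a locally free finitely generated dg-module (for instance a shifted free module or the generators of a finite resolution) using Grothendieck--Serre duality $\mathrm{D}_\Omega^\calT$ followed by $\overline{\scra}^\bc$ and $\xi$, and check by direct inspection that the result again has locally finitely generated cohomology over $\calH^\bullet(\calR)$; (4) propagate this from free modules to all of $\calD^{\fg}$ by the standard ``two out of three'' argument on triangles, using that $\calD^{\fg}$ is the triangulated subcategory generated by the free modules under the finiteness hypotheses of Lemma~\ref{lem:fg}.

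The main obstacle I anticipate is step (3): one must verify that Grothendieck--Serre duality interacts correctly with the Koszul differential so that $\kappa_\Omega$ of a finitely generated object stays finitely generated, rather than merely coherent in each internal degree. The subtlety is that $\sheafHom(\calM,\calI_\Omega)$ a priori lives in the larger category $\widetilde{\calC}$ and need not be quasi-coherent, and the symmetric algebra $\calS$ (respectively $\calR$) is infinite-dimensional in the internal direction, so one must control the interplay between the finitely many cohomological/internal degrees where $\calM$ is supported and the unboundedness of $\calS$. I would handle this by exploiting the hypothesis $n \leq 1$, which forces $\calT$ and $\calR$ to be (bi)graded Noetherian with cohomology in a small range, so that finite generation over $\calH^\bullet(\calT)$ is equivalent to a boundedness statement that the explicit formula for $\kappa_\Omega$ manifestly preserves after regrading; once reduced to the affine, genuinely dg-algebraic situation, this becomes the ``easy exercise'' flagged in the remark following Lemma~\ref{lem:fg}.
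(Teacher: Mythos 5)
Your argument has a genuine gap at step (4): it is simply not true that $\calD^{\fg}(\calT\Mod_+)$ is the triangulated subcategory generated by the free modules $\calT \otimes_{\calO_X} \calF$, $\calF \in \calD^b\Coh(X)$, once $n=1$ (nor that $\calD^{\fg}(\calR\Mod_-)$ is generated by free $\calR$-modules, even for $n=0$). Already over $X=\mathrm{Spec}(k)$ with zero differential one has $\calT \cong \mathrm{S}(V_0) \otimes \bigwedge(V_{-1})$ with $V_{-1}$ in cohomological degree $-1$, and the trivial module $k$ (which certainly lies in $\calD^{\fg}(\calT\Mod_+)$) is \emph{not} in the triangulated hull of the free modules: $\Ext^\bullet_{\calT}(k,k)$ contains $\mathrm{S} \bigl( (V_{-1})^\vee \bigr)$ with generators in cohomological degree $2$, hence is unbounded, whereas $\Ext^\bullet_{\calT}(\calP,k)$ is bounded for every $\calP$ in the hull of finitely generated free modules (boundedness is stable under shifts, cones and direct summands). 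The same obstruction occurs on the exterior side $\calR = \bigwedge(\calV_0)^\vee$ for $n=0$, where finitely generated modules have no finite free resolutions. So the ``two-out-of-three propagation from free modules'' can never reach all of $\calD^{\fg}$, and the whole reduction collapses. Two smaller problems: your ``key observation'' that finite generation over $\calH^\bullet(\calT)$ equals the $\bc$ condition plus internal boundedness is false (an infinite direct sum of trivial modules placed in internal degrees tending to $+\infty$ is in $\calD^{\bc}(\calT\Mod_+)$ but not in $\calD^{\fg}(\calT\Mod_+)$); and the ``easy exercise'' in the remark refers to Lemma~\ref{lem:fg} (the comparison $\calD\calF\calG(\calT) \cong \calD^{\fg}(\calT\Mod_+)$), not to the present proposition, which does not become trivial in the affine case.

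This failure is precisely what dictates the shape of the paper's proof. For $n=0$ the paper uses two \emph{different} generating families, matched by Koszul duality: on the symmetric side, $\calD^{\fg}(\calT\Mod_+)$ is generated by free modules (this is where the cited result on equivariant vector bundles, i.e.\ graded Hilbert syzygy, enters), while on the exterior side $\calD^{\fg}(\calR\Mod_-)$ is generated by \emph{trivial} modules, because every object of $\calC \calF \calG(\calR)$ has a finite filtration with trivial subquotients; the point is then that $\kappa_{\Omega}$ sends free $\calT$-modules to trivial $\calR$-modules and $(\kappa_{\Omega})^{-1}$ sends trivial $\calR$-modules to free $\calT$-modules. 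For $n=1$ neither category is generated by frees (or by trivials), so no generator argument is available at all; instead the paper restricts $\calM$ to the sub-dg-algebra $\widetilde{\calT}=\mathrm{S}(\calV_0)$, applies the $n=0$ case to conclude that $\widetilde{K}_{\Omega}(\calM)$ has bounded coherent cohomology, and then controls $K_{\Omega}(\calM)$ via the decomposition $K_{\Omega}(\calM) \cong \bigoplus_{i,j} \mathrm{S}^i \bigl( (\calV_{-1})^\vee \bigr) \otimes_{\calO_X} \bigl( \widetilde{K}_{\Omega}(\calM) \bigr)^j$ and a converging spectral sequence whose $E_1$-page is locally finitely generated over $\mathrm{S} \bigl( (\calV_{-1})^\vee \bigr)$ and concentrated on a diagonal strip, hence stationary after finitely many steps. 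Some such finiteness mechanism, absent from your outline, is indispensable.
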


\begin{proof}
\emph{First case:} $n=0$. In this case, $\calT=\mathrm{S}(\calV_0)$, with generators in bidegree $(0,2)$, and $\calR=\bigwedge(\calV_0)^{\vee}$, with generators in bidegree $(-1,-2)$. We need to check that $\kappa_\Omega$ sends $\calD^{\fg}(\calT\Mod_+)$ into $\calD^{\fg}(\calR\Mod_-)^\op$, and that $\kappa_\Omega^{-1}$ sends $\calD^{\fg}(\calR\Mod_-)^\op$ into $\calD^{\fg}(\calT\Mod_+)$.

First, consider the case of $\kappa_\Omega^{-1}$. Since $\calR$ is concentrated in non-positive cohomological degrees, the usual truncation functors for complexes make sense for $\calR$-dg-modules. In particular, since any object in $\calD^{\fg}(\calR\Mod_-)$ has only finitely many non-zero cohomology objects, this implies that $\calD^{\fg}(\calR\Mod_-)$ is generated, as a triangulated category, by objects whose cohomology is concentrated in only one degree. This also implies that such an object is isomorphic to a dg-module which is non-zero in only one degree. Such an object has a trivial $\calR$-action, and it is easily checked that its image under $\kappa_\Omega^{-1}$ is of the form $\calT \otimes_{\calO_X} \calF$ for $\calF$ in $\calD^b \Coh^{\Gm}(X)$; in particular it belongs to $\calD^{\fg}(\calT\Mod_+)$.

Now we consider the case of $\kappa_{\Omega}$. By Remark~\ref{rk:thm-equiv}\ref{it:description-K} and the properties of dualizing complexes (recalled in~\S\ref{ss:duality}), it is enough to prove that if $\calM$ belongs to $\calD^{\fg}(\calT\Mod_+)$ then the complex $\calS^\vee \otimes_{\calO_X} \calM = (\calS^\vee \otimes_{\calO_X} \calT) \otimes_{\calT} \calM$ (where $\calS^\vee \otimes_{\calO_X} \calT$ is endowed with the appropriate Koszul differential making it quasi-isomorphic to $\calO_X$) 
belongs to $\calD^b \Coh^{\Gm}(X)$.
This claim is local, hence we can assume that $X$ is affine and $\calV_0$ is free.

In this setting, our claim follows from the following fact: if $A$ is a Noetherian ring and $r \in \Z_{\geq 0}$, for any finitely generated $A[X_1, \cdots, X_r]$-module $M$, the $A$-modules $\mathsf{H}^i(A \lotimes_{A[X_1, \cdots, X_r]} M)$ are finitely generated, and $0$ except for finitely many $i$'s. (Here, $A$ has the ``trivial'' module structure, where each $X_j$ acts by $0$.) In fact, finite generation follows from the fact that $M$ admits a (possibly unbounded) resolution by finitely generated flat $A[X_1, \cdots, X_r]$-modules, and vanishing except for finitely many $i$'s follows from the fact that $A$ admits a finite flat resolution over $A[X_1, \cdots, X_r]$ (for instance the Koszul resolution).

\emph{Second case:} $n=1$. In this case $\calT=\Sym(\calV_{-1} \to \calV_0)$, with $\calV_{-1}$ in bidegree $(-1,2)$ and $\calV_0$ in bidegree $(0,2)$, while $\calR=\Sym \bigl( (\calV_0)^{\vee} \to (\calV_{-1})^{\vee} \bigr)$, with $(\calV_0)^{\vee}$ in bidegree $(-1,-2)$ and $(\calV_{-1})^{\vee}$ in bidegree $(0,-2)$. As in \S \ref{ss:duality}, we denote by $\calI_{\Omega}$ a bounded below complex of quasi-coherent injective $\calO_X$-modules whose image in the derived category is $\Omega$. 

As in the first case we need to show that $\kappa_\Omega$ sends $\calD^{\fg}(\calT\Mod_+)$ into $\calD^{\fg}(\calR\Mod_-)^\op$. Using truncation functors again, for this it suffices to prove that for any locally finitely generated $\Gm$-equivariant $\calT$-dg-module $\calM$, $\kappa_\Omega(\calM)$ belongs to $\calD^{\fg}(\calR\Mod_-)^\op$.
And in turn, for this it suffices to prove that the
cohomology of $\kappa_\Omega(\calM)$ is locally finitely generated over $\mathrm{S} \bigl( (\calV_{-1})^{\vee} \bigr)$. In fact it will be equivalent and easier to work with the equivalence $K_{\Omega}$ of Theorem~\ref{thm:equiv} and the dg-algebra $\calS$ (whose generators are in bidegrees $(1,-2)$ and $(2,-2)$).

Let $\widetilde{K}_{\Omega}$ be the equivalence corresponding to the complex of $\calO_X$-modules $\widetilde{\calX}$ concentrated in degree $0$, with only non-zero component $\calV_0$. Denote by $\widetilde{\calT}$, $\widetilde{\calS}$ the dg-algebras defined similarly to $\calT$ and $\calS$, but for the complex $\widetilde{\calX}$ instead of $\calX$. With this notation, $K_{\Omega}(\calM)$ is the image in the derived category of the $\calS$-dg-module $\calS \otimes_{\calO_X} \sheafHom_{\calO_X}(\calM,\calI_{\Omega})$ (with a certain differential), and $\widetilde{K}_{\Omega}(\calM)$ is the image in the derived category of the $\widetilde{\calS}$-dg-module $\widetilde{\calS} \otimes_{\calO_X} \sheafHom_{\calO_X}(\calM,\calI_{\Omega})$.

The locally finitely generated $\calT$-dg-module $\calM$ is also locally finitely generated as a $\widetilde{\calT}$-dg-module. By the first case, we deduce that $\widetilde{K}_{\Omega}(\calM)$ has locally finitely generated cohomology over $\widetilde{\calS}$; in other words, this cohomology is bounded and coherent over $\calO_X$. Now we have an isomorphism of $\mathrm{S} \bigl( (\calV_{-1})^{\vee} \bigr)$-dg-modules
\begin{equation}
\label{eq:decomposition}
K_{\Omega}(\calM) \ \cong \ \bigoplus_{i,j} \, \mathrm{S}^i \bigl( (\calV_{-1})^{\vee} \bigr) \otimes_{\calO_X} \bigl( \widetilde{K}_{\Omega}(\calM) \bigr)^j,
\end{equation}
where the term $\mathrm{S}^i \bigl( (\calV_{-1})^{\vee} \bigr) \otimes_{\calO_X} \bigl( \widetilde{K}_{\Omega}(\calM) \bigr)^j$ is in cohomological degree $j+2i$. The differential on $K_{\Omega}(\calM)$ is the sum of three terms: the differential $d_1$ induced by $d_{\calS}$, the Koszul differential $d_2$ induced by that of the Koszul complex $(\bigwedge \calV_{-1})^{\vee} \otimes \mathrm{S} \bigl( (\calV_{-1})^{\vee} \bigr)$, and finally the differential $d_3$ induced by that of $\widetilde{K}_{\Omega}(\calM)$. The effect of these differentials on the degrees of the decomposition \eqref{eq:decomposition} are the following:
\[
d_1 : \left\{
\begin{array}{ccc}
i & \mapsto & i+1 \\
j & \mapsto & j-1
\end{array} \right. ,
\quad d_2 : \left\{
\begin{array}{ccc}
i & \mapsto & i+1 \\
j & \mapsto & j-1
\end{array} \right. ,
\quad d_3 : j \mapsto j+1.
\]
Hence $K_{\Omega}(\calM)$ is the total complex of the double complex with $(p,q)$-term
\[
\calB^{p,q}:= \bigoplus_{\genfrac{}{}{0pt}{}{p=i,}{q=j+i}} \mathrm{S}^i \bigl( (\calV_{-1})^{\vee} \bigr) \otimes_{\calO_X} \bigl( \widetilde{K}_{\Omega}(\calM) \bigr)^j
\]
and differentials $d':=d_1+d_2$, $d'':=d_3$. Now $\calM$ is locally finitely generated over $\calT$; in particular it is bounded above for the cohomological grading. Hence $\widetilde{K}_{\Omega}(\calM)$ is bounded below for the cohomological grading. It follows that $\calB^{p,q}=0$ for $q \ll 0$. Hence by \cite[Proposition 2.2.1]{MR}, there exists a converging spectral sequence
\[
E_1^{p,q}=\calH^q(\calB^{p,*},d'') \ \Rightarrow \ \calH^{p+q}(K_{\Omega}(\calM)).
\]
As $\widetilde{K}_{\Omega}(\calM)$ has bounded, coherent cohomology over $\calO_X$, $E_1^{\bullet,\bullet}$ is a locally finitely generated $\mathrm{S}((\calV_{-1})^{\vee})$-module. Moreover, in the $(p,q)$-plane, it is concentrated on an ascending diagonal strip. It follows that the spectral sequence is stationary after finitely many steps. We deduce that $\calH^{\bullet}(K_{\Omega}(\calM))$ is locally finitely generated over $\mathrm{S}((\calV_{-1})^{\vee})$.


By symmetry the equivalence $(\kappa_{\Omega})^{-1}$ also sends dg-modules with locally finitely generated cohomology to dg-modules with the same property, which finishes the proof.
\end{proof}

\subsection{Intersection of vector subbundles}
\label{ss:lkd}

We can now extend the main result of \cite{MR} to nice schemes. We let $X$ be such a scheme, and denote by $\Omega$ a dualizing complex. 

Let $E$ be a vector bundle over $X$, let $F_1,F_2 \subset E$ be two vector subbundles, and let $\calF_1,\calF_2,\calE$ be the sheaves of sections of $F_1,F_2,E$. Let $E^*$ be the dual vector bundle, and $F_1^{\bot}, F_2^{\bot} \subset E^*$ be the orthogonals to $F_1$ and $F_2$. We will apply the constructions above to the complex
\begin{equation}
\label{eqn:definition-X-lkd}
\calX:=(0 \to \calF_1^{\bot} \to \calF_2^{\vee} \to 0)
\end{equation}
where $\calF_1^\bot$ is in degree $-1$, $\calF_2^\vee$ is in degree $0$, and the differential is the composition $\calF_1^\bot \hookrightarrow \calE^\vee \twoheadrightarrow \calF_2^\vee$.
We set
\[
\calD^{\mathrm{c}}_{\Gm}(F_1 \, \rcap_E \, F_2) \ := \ \calD^{\fg}(\calT\Mod), \qquad
\calD^{\mathrm{c}}_{\Gm}(F_1^{\bot} \, \rcap_{E^*} \, F_2^{\bot}) \ := \ \calD^{\fg}(\calR\Mod).
\]

To justify this notation, recall the notion of dg-scheme, first defined in \cite{CK} and later studied in \cite{R2}, \cite{MR}, \cite{BR}. (Here we follow the conventions of \cite{MR}.) It is explained in \cite[Lemma 4.1.1]{MR} that the derived category of coherent dg-sheaves on the dg-scheme $F_1 \, \rcap_E \, F_2$, respectively $F_1^{\bot} \, \rcap_{E^*} \, F_2^{\bot}$, is equivalent to the subcategory of the derived category of quasi-coherent dg-modules over the sheaf of $\calO_X$-dg-algebras $\calT$, respectively $\calR$, whose objects have their cohomology locally finitely generated. (Note that here we consider \emph{ordinary} dg-modules, and not $\Gm$-equivariant ones.) Hence the category $\calD^{\fg}(\calT\Mod)$, respectively $\calD^{\fg}(\calR\Mod)$, is a ``graded version'' of this category.

\begin{remark}
\label{rk:symmetry-definition}
Our definition of the category $\calD^{\mathrm{c}}_{\Gm}(F_1 \, \rcap_E \, F_2)$ is not symmetric in $F_1$ and $F_2$. However, it follows from an obvious $\Gm$-equivariant analogue of \cite[Proposition 1.3.2]{MR} that the triangulated categories $\calD^{\mathrm{c}}_{\Gm}(F_1 \, \rcap_E \, F_2)$ and $\calD^{\mathrm{c}}_{\Gm}(F_2 \, \rcap_E \, F_1)$ are equivalent.
\end{remark}

With this notation, using~\eqref{eqn:equiv-fg}, Proposition~\ref{prop:restriction-kappa} can rephrased in the following terms.

\begin{thm}
\label{thm:lkd}
Assume that $X$ is a nice scheme with dualizing complex $\Omega$. Then $\kappa_{\Omega}$ induces an equivalence of triangulated categories
\[
\kappa_{\Omega} : \calD^{\mathrm{c}}_{\Gm}(F_1 \, \rcap_E \, F_2) \ \xrightarrow{\sim} \ \calD^{\mathrm{c}}_{\Gm}(F_1^{\bot} \, \rcap_{E^*} \, F_2^{\bot})^\op,
\]
which satisfies $\kappa_{\Omega}(\calM[n] \langle m \rangle) = \kappa_{\Omega}(\calM)[-n+m]\langle -m \rangle$.
\end{thm}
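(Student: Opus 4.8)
The plan is to recognize that this statement is a mere translation of Proposition~\ref{prop:restriction-kappa} through the definitions of the two categories $\calD^{\mathrm{c}}_{\Gm}(\cdots)$ and the equivalences supplied by Lemma~\ref{lem:fg}; the work consists only in checking that everything has been set up so that these two results apply and compose correctly. First I would observe that the complex $\calX$ of \eqref{eqn:definition-X-lkd} is concentrated in degrees $-1$ and $0$, with terms $\calF_1^\bot$ and $\calF_2^\vee$ locally free of finite rank; hence it fits the framework of \S\ref{ss:definitions} with $n=1$, and the associated dg-algebras $\calT$, $\calS$ and $\calR$ are precisely those for which Lemma~\ref{lem:fg} and Proposition~\ref{prop:restriction-kappa} were established.

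Next I would unwind the definitions. By construction we have $\calD^{\mathrm{c}}_{\Gm}(F_1 \rcap_E F_2) = \calD^{\fg}(\calT\Mod)$ and $\calD^{\mathrm{c}}_{\Gm}(F_1^\bot \rcap_{E^*} F_2^\bot) = \calD^{\fg}(\calR\Mod)$. By Lemma~\ref{lem:fg}(i) the natural inclusion induces an equivalence $\calD^{\fg}(\calT\Mod_+) \xrightarrow{\sim} \calD^{\fg}(\calT\Mod)$, and by Lemma~\ref{lem:fg}(ii) the inclusion induces an equivalence $\calD^{\fg}(\calR\Mod_-) \xrightarrow{\sim} \calD^{\fg}(\calR\Mod)$. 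Meanwhile Proposition~\ref{prop:restriction-kappa} provides the equivalence $\kappa_\Omega : \calD^{\fg}(\calT\Mod_+) \xrightarrow{\sim} \calD^{\fg}(\calR\Mod_-)^\op$. Composing the first equivalence (inverted), then $\kappa_\Omega$, then the second equivalence yields the asserted equivalence $\calD^{\mathrm{c}}_{\Gm}(F_1 \rcap_E F_2) \xrightarrow{\sim} \calD^{\mathrm{c}}_{\Gm}(F_1^\bot \rcap_{E^*} F_2^\bot)^\op$. The grading formula $\kappa_\Omega(\calM[n]\langle m\rangle) = \kappa_\Omega(\calM)[-n+m]\langle -m\rangle$ is inherited verbatim from Theorem~\ref{thm:equiv2}, since the equivalences of Lemma~\ref{lem:fg} are induced by inclusions of subcategories and therefore commute with the cohomological shift $[\,\cdot\,]$ and the internal shift $\langle\,\cdot\,\rangle$.

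Since all the ingredients are already in place, no genuine obstacle remains; the content is entirely the verification that the functors compose as claimed. The one point deserving a moment's attention is that $\kappa_\Omega$ is a priori only defined on $\calD^{\bc}(\calT\Mod_+)$, so one must confirm that $\calD^{\fg}(\calT\Mod_+)$ sits inside $\calD^{\bc}(\calT\Mod_+)$ (and similarly on the $\calR$-side) before invoking Proposition~\ref{prop:restriction-kappa}; this is already implicit in that proposition and holds because cohomology that is locally finitely generated over $\calH^\bullet(\calT)$ has bounded, coherent internal-degree components.
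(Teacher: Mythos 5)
Your proposal is correct and is exactly the paper's own argument: the paper introduces Theorem~\ref{thm:lkd} with the sentence that, using Lemma~\ref{lem:fg} and the definitions $\calD^{\mathrm{c}}_{\Gm}(F_1 \, \rcap_E \, F_2) := \calD^{\fg}(\calT\Mod)$, $\calD^{\mathrm{c}}_{\Gm}(F_1^{\bot} \, \rcap_{E^*} \, F_2^{\bot}) := \calD^{\fg}(\calR\Mod)$, it is a rephrasing of Proposition~\ref{prop:restriction-kappa}, which is precisely your composition of equivalences. Your added remarks --- that the complex \eqref{eqn:definition-X-lkd} has $n=1$ so the finiteness results apply, and that $\calD^{\fg}(\calT\Mod_+) \subseteq \calD^{\bc}(\calT\Mod_+)$ is needed (and holds) for $\kappa_\Omega$ to be defined on these objects --- are points the paper leaves implicit, and they are verified correctly.
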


\begin{remark}
One can easily check that, if the assumptions of \cite[Theorem 4.2.1]{MR} are satisfied, then $\calO_X$ is a dualizing complex, and the equivalence of \emph{loc.}~\emph{cit.}~is isomorphic to the equivalence $\kappa_{\calO_X}$ of Theorem~\ref{thm:lkd}.
\end{remark}

\section{Linear Koszul duality and morphisms of perfect complexes} \label{sec:morphisms}

\subsection{Statement} \label{ss:kd-morphisms-complexes}

Let us come back to the setting of \S \ref{ss:kd-version3}. More precisely, we consider a nice scheme $X$ with dualizing complex $\Omega$, and two complexes $\calX$ and $\calX'$ of locally free sheaves as in \S\ref{ss:definitions}. 
We denote by $\calR,\calS,\calT$, respectively $\calR',\calS',\calT'$, the dg-algebras constructed from $\calX$, respectively $\calX'$. We also denote by
\[
\kappa_{\Omega} :  \calD^{\bc}(\calT\Mod_+) \ \xrightarrow{\sim} \ \calD^{\bc}(\calR\Mod_-)^\op, \quad
\kappa_{\Omega}' :  \calD^{\bc}(\calT'\Mod_+) \ \xrightarrow{\sim} \ \calD^{\bc}(\calR'\Mod_-)^\op
\]
the associated equivalences of Theorem~\ref{thm:equiv2}.

Let $\varphi: \calX' \to \calX$ be a morphism of complexes. This morphism induces morphisms of $\Gm$-equivariant dg-algebras
\[
\Phi : \calT' \to \calT, \qquad \Psi : \calR \to \calR'.
\]
In turn, these morphisms of dg-algebras induce functors
\[
\Phi_* : \calC(\calT\Mod) \to \calC(\calT'\Mod), \quad \Psi_* : \calC(\calR'\Mod) \to \calC(\calR\Mod)
\]
(restriction of scalars) and
\[
\Phi^* : \calC(\calT'\Mod) \to \calC(\calT\Mod), \quad \Psi^* : \calC(\calR\Mod) \to \calC(\calR'\Mod)
\]
(extension of scalars).

The functors $\Phi_*$ and $\Psi_*$ are exact, hence induce functors
\[
R\Phi_* : \calD(\calT\Mod) \to \calD(\calT'\Mod), \quad R\Psi_* : \calD(\calR'\Mod) \to \calD(\calR\Mod).
\]
These functors clearly send the subcategory $\calD^\bc(\calT\Mod_\pm)$ into $\calD^\bc(\calT'\Mod_\pm)$ and the subcategory $\calD^\bc(\calR'\Mod_-)$ into $\calD^\bc(\calR\Mod_-)$ (and similarly without ``$\bc$'').

The functors $\Phi^*$ and $\Psi^*$ are not exact. However, it follows from \cite[Proposition 1.2.3]{MR} (existence of $K$-flat resolutions) that they admit left derived functors
\[
L\Phi^* : \calD(\calT'\Mod) \to \calD(\calT\Mod), \quad L\Psi^* : \calD(\calR\Mod) \to \calD(\calR'\Mod).
\]

The following result expresses the compatibility of our Koszul duality equivalence $\kappa_{\Omega}$ with morphisms of perfect sheaves. It is similar in spirit (but in a much more general setting) to \cite[Proposition 2.5.4]{R2}. 

\begin{prop}
\label{prop:kd-morphisms-complexes}
Let $X$ be a nice scheme with dualizing complex $\Omega$.

\begin{enumerate}
\item 
The functor $L\Psi^*$ restricts to a functor from $\calD^{\bc}(\calR\Mod_-)$ to $\calD^{\bc}(\calR'\Mod_-)$, denoted similarly. Moreover, there exists an isomorphism of functors from $\calD^\bc(\calT\Mod_+)$ to $\calD^\bc(\calR'\Mod_-)^\op$:
\[
L\Psi^* \circ \kappa_{\Omega} \ \cong \ \kappa'_{\Omega} \circ R\Phi_*.
\]
\item 
The functor $L\Phi^*$ restricts to a functor from $\calD^{\bc}(\calT'\Mod_+)$ to $\calD^{\bc}(\calT\Mod_+)$, denoted similarly. Moreover, there exists an isomorphism of functors from $\calD^\bc(\calT'\Mod_+)$ to $\calD^\bc(\calR\Mod_-)^\op$:
\[
\kappa_{\Omega} \circ L\Phi^* \ \cong \ R\Psi_* \circ \kappa_{\Omega}'.
\]
\end{enumerate}
\end{prop}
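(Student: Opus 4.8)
The plan is to reduce part (2) to part (1) by a formal argument with adjoint functors, and to prove part (1) by unwinding $\kappa_{\Omega}$ into its two constituents, Grothendieck--Serre duality and the covariant functor $\overline{\scra}$, and treating each separately. Throughout, the morphism $\phi$ also induces a morphism of $\Gm$-equivariant dg-algebras $\Xi:\calS\to\calS'$ (coming from $\phi^{\vee}$), and under the regrading equivalences $\xi,\xi'$ of \S\ref{ss:kd-version3} the functors $\Psi_*,\Psi^*$ correspond to $\Xi_*,\Xi^*$. Since $\xi$ is an equivalence commuting with extension and restriction of scalars, it suffices to prove both statements with $K_{\Omega}=\overline{\scra}\circ\mathbf{D}_{\Omega}^{\calT}$ in place of $\kappa_{\Omega}$ and $\Xi$ in place of $\Psi$. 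Moreover, once (1) is known, (2) follows from uniqueness of adjoints: we have adjunctions $L\Phi^*\dashv R\Phi_*$ and $L\Psi^*\dashv R\Psi_*$, and since $\kappa_{\Omega},\kappa_{\Omega}'$ are equivalences onto opposite categories, on the opposite categories one has $(R\Psi_*)^{\op}\dashv(L\Psi^*)^{\op}$. Rewriting (1) as $R\Phi_*\cong(\kappa_{\Omega}')^{-1}\circ(L\Psi^*)^{\op}\circ\kappa_{\Omega}$ and passing to left adjoints (the left adjoint of a composite being the composite of the left adjoints in reverse order, and an equivalence being its own quasi-inverse) yields $L\Phi^*\cong\kappa_{\Omega}^{-1}\circ(R\Psi_*)^{\op}\circ\kappa_{\Omega}'$, which is exactly (2); as these manipulations only use that the horizontal functors are equivalences, the resulting isomorphism is genuine.

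To prove (1) it then remains to produce two intertwining isomorphisms. First, Grothendieck--Serre duality commutes with restriction of scalars: $R\Phi_*\circ\mathbf{D}_{\Omega}^{\calT}\cong\mathbf{D}_{\Omega}^{\calT'}\circ R\Phi_*$. This is immediate at the level of complexes, because $\mathrm{D}_{\Omega}$ is computed by the exact functor $\sheafHom_{\calO_X}(-,\calI_{\Omega})$, which does not see the module structure, while $\Phi_*$ preserves the underlying $\calO_X$-dg-module and merely transports the action along $\Phi$; the two resulting $\calT'$-actions coincide by the definition of ${}^0\widetilde{\mathrm{D}}_{\Omega}$. Second, the covariant duality interchanges restriction and extension of scalars: $L\Xi^*\circ\overline{\scra}\cong\overline{\scra}'\circ R\Phi_*$. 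At the chain level one checks directly, using a local basis $\{x_{\alpha}\}$ of $\calX'$ and the matrix of $\phi$, that $\Xi^*\scra(\calM)=\scra'(\Phi_*\calM)$: both have underlying object $\calS'\otimes_{\calO_X}\calM$, the differentials $d_1$ agree trivially, and the two Koszul differentials $d_2$ agree precisely because $\phi^{\vee}$ is dual to $\phi$.

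The main obstacle is to promote this chain-level identity to the derived level, i.e.\ to see that $\Xi^*$ computes $L\Xi^*$ on the objects $\scra(\calM)$; note that $\calS'$ is \emph{not} flat over $\calS$ in general, so $\scra(\calM)$ is not $K$-flat over $\calS$ for arbitrary $\calM$. I would resolve this as follows. Choose a $K$-flat resolution $\calP\to\calM$ over $\calT$ in $\calC(\calT\Mod_-)$, which exists by \cite[Proposition 1.2.3]{MR}. Since each internal-degree component of $\calT$ is a bounded complex of flat $\calO_X$-modules, $\calT$ is $K$-flat over $\calO_X$, and the identity $\calN\otimes_{\calO_X}\calP\cong(\calN\otimes_{\calO_X}\calT)\otimes_{\calT}\calP$ (valid for any $\calO_X$-complex $\calN$) shows that $\calP$ is also $K$-flat over $\calO_X$. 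A filtration argument by internal degree, exactly as in the proof of Proposition~\ref{prop:Koszulcomplex} (the differential $d_2$ strictly raising the filtration, the associated graded being the untwisted tensor product), then shows that $\scra(\calP)=\calS\otimes_{\calO_X}\calP$ is $K$-flat over $\calS$. As $\overline{\scra}$ is exact, $\scra(\calP)\to\scra(\calM)$ is a quasi-isomorphism, so $\scra(\calP)$ is a $K$-flat representative of $\overline{\scra}(\calM)$, whence
\[
L\Xi^*\overline{\scra}(\calM)\cong\Xi^*\scra(\calP)=\scra'(\Phi_*\calP)\cong\overline{\scra}'(R\Phi_*\calM),
\]
where the last isomorphism uses that $\Phi_*\calP\to\Phi_*\calM$ is a quasi-isomorphism and $\overline{\scra}'$ is exact.

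Combining the two intertwiners gives $L\Xi^*\circ K_{\Omega}\cong\overline{\scra}'\circ R\Phi_*\circ\mathbf{D}_{\Omega}^{\calT}\cong\overline{\scra}'\circ\mathbf{D}_{\Omega}^{\calT'}\circ R\Phi_*=K_{\Omega}'\circ R\Phi_*$, and transporting through $\xi$ yields the displayed isomorphism of (1). Finally, this isomorphism, first established with target $\calD(\calR'\Mod_-)^{\op}$, shows that $L\Psi^*$ preserves the subcategory $\calD^{\bc}$: every object of $\calD^{\bc}(\calR\Mod_-)$ is $\kappa_{\Omega}(\calM)$ for some $\calM$ in $\calD^{\bc}(\calT\Mod_+)$, and $\kappa_{\Omega}'(R\Phi_*\calM)$ lies in $\calD^{\bc}(\calR'\Mod_-)^{\op}$ because both $R\Phi_*$ and $\kappa_{\Omega}'$ preserve the boundedness-coherence condition. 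This establishes the first assertion of (1), completing the proof modulo the adjunction reduction already described for (2).
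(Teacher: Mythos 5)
Your strategy for part (1) is the same as the paper's: split $\kappa_{\Omega}$ into $\mathbf{D}_{\Omega}^{\calT}$ and $\overline{\scra}$, use that Grothendieck--Serre duality commutes with restriction of scalars, prove the chain-level identity $\Xi^*\scra(\calM)=\scra'(\Phi_*\calM)$, and promote it to derived categories by a $K$-flatness argument, finally reading off the restriction property of $L\Psi^*$ from the isomorphism. However, your $K$-flatness step has a genuine gap. You ``choose a $K$-flat resolution $\calP\to\calM$ over $\calT$ in $\calC(\calT\Mod_-)$, which exists by \cite[Proposition 1.2.3]{MR}.'' That proposition builds resolutions out of free $\calT$-dg-modules; since $\calT=\Sym_{\calO_X}(\calX)$ is concentrated in \emph{non-negative} internal degrees, a free module $\calT\otimes_{\calO_X}\calF$ on generators of bounded-above internal degree has internal degrees unbounded above, so these resolutions never lie in $\calC(\calT\Mod_-)$ (for $\calM\neq 0$), and the cited result does not give what you claim. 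Membership in $\calC(\calT\Mod_-)$ is not a pedantic point here: you need it both to know that $\scra(\calP)\to\scra(\calM)$ is a quasi-isomorphism ($\scra$ is exact only on the minus category, by Theorem~\ref{thm:Koszuldualitycomodules}) and to run the filtration argument showing $\scra(\calP)$ is $K$-flat over $\calS$. The paper circumvents exactly this asymmetry in Lemma~\ref{lem:T-dg-mod-K-flat}: it replaces $\calM$ by $\scrb(\calN)$ with $\calN$ in $\calC(\calS\Mod_-)$ $K$-flat over $\calS$ --- such resolutions \emph{do} stay in the minus category because $\calS$ has non-positive internal degrees --- and shows that $\scrb(\calN)$ is $K$-flat over $\calO_X$ (not over $\calT$, which is neither available nor needed); combined with Lemmas~\ref{lem:S-K-flat} and~\ref{lem:image-C-K-flat} this is the correct substitute for your step, and the rest of your argument for (1) then coincides with the paper's proof.

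Your reduction of part (2) to part (1) by uniqueness of adjoints also has a gap. Uniqueness of adjoints identifies two left adjoints \emph{of the same functor between the same categories}; but the adjunction $L\Phi^*\dashv R\Phi_*$ lives on the full derived categories $\calD(\calT'\Mod)$ and $\calD(\calT\Mod)$, whereas the isomorphism of (1) lives on the subcategories $\calD^{\bc}(\Mod_+)$ and $\calD^{\bc}(\Mod_-)$. From (1) you may conclude that the restriction of $R\Phi_*$ to $\calD^{\bc}(\calT\Mod_+)$ admits a left adjoint, namely $G:=\kappa_{\Omega}^{-1}\circ(R\Psi_*)^{\op}\circ\kappa'_{\Omega}$; but to identify $G$ with (the restriction of) $L\Phi^*$ you would need to know that $L\Phi^*$ sends $\calD^{\bc}(\calT'\Mod_+)$ into $\calD^{\bc}(\calT\Mod_+)$ --- and that is precisely the first assertion of (2), which your argument never establishes. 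The Hom-isomorphisms $\Hom(G\calM',\calM)\cong\Hom(\calM',R\Phi_*\calM)\cong\Hom(L\Phi^*\calM',\calM)$, natural in $\calM\in\calD^{\bc}(\calT\Mod_+)$, do not force $G\calM'\cong L\Phi^*\calM'$: an object of $\calD(\calT\Mod)$ lying outside a full subcategory can corepresent the same functor on that subcategory as an object inside it. This is why the paper proves (2) by a direct argument parallel to that of (1) (its proof of the proposition only treats (i) and declares (ii) ``similar''), deducing the restriction property of $L\Phi^*$ from the resulting isomorphism of functors, exactly as it does for $L\Psi^*$ in (1); a purely formal adjunction argument cannot recover that restriction statement.
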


\subsection{Proof of Proposition~\ref{prop:kd-morphisms-complexes}}

To prove Proposition~\ref{prop:kd-morphisms-complexes} we need some preparatory lemmas. We assume the conditions in the proposition are satisfied.

\begin{lem}
\label{lem:S-K-flat}
The $\Gm$-equivariant $\calO_X$-dg-module $\calS$ is $K$-flat.
\end{lem}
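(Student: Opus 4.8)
The plan is to reduce the $K$-flatness of $\calS$ to the $K$-flatness of each of its internal degree components, and then to observe that each such component is a bounded complex of flat $\calO_X$-modules, exactly as in the argument already used in the proof of Proposition~\ref{prop:Koszulcomplex}.

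First I would recall the structure of $\calS = \Sym_{\calO_X}(\calY)$. Since $\calY = \calX^\vee[-1]$ has all of its generators in internal degree $-2$, the algebra $\calS$ is concentrated in non-positive even internal degrees, and its internal degree $-2m$ component is the $m$-th graded-symmetric power $\Sym^m_{\calO_X}(\calY)$. As $\calY$ is a bounded complex of locally free $\calO_X$-modules of finite rank, each power $\Sym^m_{\calO_X}(\calY)$ is again a bounded complex of locally free $\calO_X$-modules of finite rank (being a subquotient, resp.~a direct factor, of $\calY^{\otimes m}$); in particular its terms are flat and it is concentrated in finitely many cohomological degrees. A bounded complex of flat $\calO_X$-modules is $K$-flat in the sense of \cite{Sp}, so each internal degree component $\calS_j$ is $K$-flat.

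Next I would use that both tensoring over $\calO_X$ and forming cohomology respect the internal grading, since $\calO_X$ is concentrated in internal degree $0$ and all differentials preserve the internal degree. Concretely, for any acyclic $\Gm$-equivariant $\calO_X$-dg-module $A$ one has $\left( \calS \otimes_{\calO_X} A \right)_m \cong \bigoplus_{i+j=m} \calS_i \otimes_{\calO_X} A_j$, where each $A_j$ is acyclic (because $\calH^\bullet(A) = \bigoplus_j \calH^\bullet(A_j)$). Since each $\calS_i$ is $K$-flat, each summand $\calS_i \otimes_{\calO_X} A_j$ is acyclic, and an arbitrary direct sum of acyclic complexes is acyclic, as cohomology commutes with direct sums. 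Hence $\left( \calS \otimes_{\calO_X} A \right)_m$ is acyclic for every $m$, so $\calS \otimes_{\calO_X} A$ is acyclic, proving that $\calS$ is $K$-flat.

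The argument involves no genuine obstacle; the only points that require a little care are the reduction to internal degree components, which rests on the fact that the internal grading splits $\calS$ as a direct sum of ordinary $\calO_X$-complexes, and the permanence of $K$-flatness under arbitrary (possibly infinite) direct sums. Both are standard, the latter being immediate from the acyclicity criterion just used.
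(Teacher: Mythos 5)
Your proposal is correct and takes essentially the same approach as the paper: the paper's proof likewise reduces to showing that each internal-degree component $\calS_i$ is $K$-flat, and observes that each such component is a bounded complex of flat $\calO_X$-modules (your extra paragraph on the direct-sum decomposition just makes explicit the reduction the paper leaves implicit). One small caveat: your parenthetical justification via ``subquotient of $\calY^{\otimes m}$'' is not a valid reason for flatness (subquotients of flat modules need not be flat); the correct, standard reason is that locally, the graded-symmetric power of a bounded free complex is a sum of tensor products of symmetric powers of the even-degree part with exterior powers of the odd-degree part, all of which are free -- but this slip does not affect the argument.
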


\begin{proof}
It is enough to prove that for any $i \in \mathbb{Z}$, the $\calO_X$-dg-module $\calS_i$ is $K$-flat. However, this complex is a bounded complex of flat $\calO_X$-modules, which proves this fact.
\end{proof}

\begin{lem}
\label{lem:T-dg-mod-K-flat}
For every object $\calM$ of $\calC(\calT\Mod_-)$, there exists an object $\calM'$ of $\calC(\calT\Mod_-)$ which is $K$-flat as a $\Gm$-equivariant $\calO_X$-dg-module and such that the images of $\calM$ and $\calM'$ in $\calD(\calT\Mod_-)$ are isomorphic.
\end{lem}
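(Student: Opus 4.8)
The goal is to produce, for any $\calM$ in $\calC(\calT\Mod_-)$, a $K$-flat (over $\calO_X$) replacement $\calM'$ still lying in $\calC(\calT\Mod_-)$ and isomorphic to $\calM$ in the derived category. The natural approach is to invoke the existence of $K$-flat resolutions in the category of $\calT$-dg-modules, and then check that the resolution stays in the ``bounded-above internal degree'' subcategory and is moreover $K$-flat as a mere $\calO_X$-module (not just as a $\calT$-module). The key structural fact to exploit is Lemma~\ref{lem:S-K-flat}: $\calS$, and by the same token $\calT$ (being a bounded-below-in-cohomology tensor/symmetric construction on a finite complex of locally free sheaves, with each internal-degree component a bounded complex of flats), is $K$-flat over $\calO_X$. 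So a dg-module that is $K$-flat over $\calT$ should remain $K$-flat over $\calO_X$ because extension of scalars along the $K$-flat algebra $\calO_X \to \calT$ preserves $K$-flatness.

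\textbf{First steps.} First I would recall, from \cite[Proposition 1.2.3]{MR} (existence of $K$-flat resolutions, already cited in the excerpt for the construction of $L\Phi^*$ and $L\Psi^*$), that $\calM$ admits a surjective quasi-isomorphism $\calM' \twoheadrightarrow \calM$ with $\calM'$ a $K$-flat $\calT$-dg-module built from a resolution by direct sums of shifts of free $\calT$-modules $\calT\langle i\rangle[j]$. The standard construction (a homotopy colimit/bar-type resolution, or the Spaltenstein construction) produces $\calM'$ whose underlying graded object is assembled from pieces of the form $\calT \otimes_{\calO_X} \calP$ with $\calP$ flat over $\calO_X$. The point I would then emphasize is twofold: (a) because $\calM$ has internal degree bounded above and $\calT$ is concentrated in non-positive internal degrees, the resolution can be arranged to stay in $\calC(\calT\Mod_-)$ (the free generators needed only involve internal-degree shifts that are bounded above); and (b) the resulting $\calM'$, being a $K$-flat $\calT$-dg-module, is $K$-flat over $\calO_X$.

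\textbf{The main point and the obstacle.} The heart of the matter is claim (b), that a $K$-flat $\calT$-dg-module is automatically $K$-flat over $\calO_X$. The argument is: for an acyclic $\calO_X$-dg-module $\calL$, one has $\calM' \otimes_{\calO_X} \calL \cong \calM' \otimes_{\calT} (\calT \otimes_{\calO_X} \calL)$, and $\calT \otimes_{\calO_X} \calL$ is acyclic because $\calT$ is $K$-flat over $\calO_X$ (Lemma~\ref{lem:S-K-flat} applies verbatim to $\calT$, each internal-degree component being a bounded complex of flat $\calO_X$-modules); then $K$-flatness of $\calM'$ over $\calT$ gives that $\calM' \otimes_{\calT}(\calT\otimes_{\calO_X}\calL)$ is acyclic, so $\calM'\otimes_{\calO_X}\calL$ is acyclic. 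I expect the main obstacle to be purely bookkeeping: one must verify that the $K$-flat resolution over $\calT$ can genuinely be chosen inside $\calC(\calT\Mod_-)$ (controlling the internal grading throughout the possibly infinite resolution process), and that the associativity isomorphism $\calM'\otimes_{\calO_X}\calL \cong \calM'\otimes_{\calT}(\calT\otimes_{\calO_X}\calL)$ is compatible with the $\Gm$-equivariant, $\bbZ^2$-graded structure. Both are routine in the framework of \cite{R2,BR} recalled in \S\ref{ss:nice-schemes}, where enough $K$-flat objects exist in $\calC(\calT\Mod)$ and the relevant tensor-product associativity is established, so the proof reduces to citing these facts and recording the grading bound.
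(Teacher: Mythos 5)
Your step (b) --- that a $K$-flat $\calT$-dg-module is automatically $K$-flat over $\calO_X$ because $\calT$ itself is $K$-flat over $\calO_X$ --- is correct; this is \cite[Lemma 1.3.2]{R2}, which the paper also invokes. The genuine gap is in your step (a), and it is precisely the difficulty that this lemma exists to address. You assert that ``$\calT$ is concentrated in non-positive internal degrees,'' but the opposite is true: the generators of $\calT=\Sym_{\calO_X}(\calX)$ sit in internal degree $2$, so $\calT$ lives in non-\emph{negative} internal degrees, unbounded above (it is $\calS$ and $\calT^\vee$ that are concentrated in non-positive internal degrees). Consequently no nonzero free module $\calT\otimes_{\calO_X}\calP$, whatever internal-degree shift you apply to the generators, belongs to $\calC(\calT\Mod_-)$, and the semi-free resolution produced by \cite[Proposition 1.2.3]{MR} lies in $\calC(\calT\Mod)$ but never in $\calC(\calT\Mod_-)$. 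This is exactly the phenomenon recorded in Remark~\ref{rk:thm-equiv}: the category $\calD(\calT\Mod_-)$ does not even contain the free module $\calT$. So your claim that ``the resolution can be arranged to stay in $\calC(\calT\Mod_-)$'' fails, and with it the whole argument, since the lemma demands that $\calM'$ itself be an object of $\calC(\calT\Mod_-)$.

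The paper's proof avoids resolving over $\calT$ altogether: by Theorem~\ref{thm:Koszuldualitycomodules} one may assume $\calM=\scrb(\calN)$ with $\calN$ in $\calC(\calS\Mod_-)$; on the $\calS$-side the resolution argument you invoke \emph{is} legitimate ($\calS$ is non-positively graded internally, so free $\calS$-modules with bounded-above generators stay in $\calC(\calS\Mod_-)$), so one may further assume $\calN$ is $K$-flat over $\calS$, hence over $\calO_X$ by Lemma~\ref{lem:S-K-flat} and \cite[Lemma 1.3.2]{R2}; finally one checks by hand that $\scrb(\calN)$ --- which lies in $\calC(\calT\Mod_-)$ because $\calT^\vee$ is non-positively graded internally --- is $K$-flat over $\calO_X$, since each component $\scrb(\calN)_n=\bigoplus_{k+l=n}(\calT^\vee)_k\otimes\calN_l$ is a finite sum built from $K$-flat $\calO_X$-dg-modules by shifts and cones. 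Note that the output is only claimed to be $K$-flat over $\calO_X$, not over $\calT$, which is all the lemma asserts. (Your approach could plausibly be salvaged by a different device: take a $K$-flat resolution $\calP\to\calM$ in $\calC(\calT\Mod)$ and pass to the quotient of $\calP$ by the sub-dg-module of internal degrees $>N$, where $\calM_n=0$ for $n>N$; this quotient lies in $\calC(\calT\Mod_-)$, remains quasi-isomorphic to $\calM$ because cohomology decomposes along the internal grading, and is $K$-flat over $\calO_X$ as an $\calO_X$-dg-module direct summand of $\calP$. But some such additional step is indispensable and is absent from your proposal.)
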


\begin{proof}
Let $\calM$ be an object of $\calC(\calT\Mod_-)$. By Theorem~\ref{thm:Koszuldualitycomodules}, there exists an object $\calN$ of $\calC(\calS\Mod_-)$ such that $\calM$ and $\scrb(\calN)$ are isomorphic in the derived category. Hence we can assume that $\calM=\scrb(\calN)$.

By the same arguments as in \cite[Proposition 1.2.3]{MR}, using the existence of enough flat quasi-coherent $\calO_X$-modules in $\QCoh(X)$ (see \cite[\S 3.2]{BR}) one can check that for any object $\calN'$ of $\calC(\calS\Mod_-)$ there exists an object $\calN''$ of $\calC(\calS\Mod_-)$ which is $K$-flat as a $\Gm$-equivariant $\calS$-dg-module and a quasi-isomorphism $\calN'' \xrightarrow{\qis} \calN'$. Hence (as $\scrb$ is exact) we can assume that $\calN$ is $K$-flat as a $\Gm$-equivariant $\calS$-dg-module. Then it follows from Lemma~\ref{lem:S-K-flat} that $\calN$ is also $K$-flat as a $\Gm$-equivariant $\calO_X$-dg-module (see \cite[Lemma 1.3.2]{R2}).

We claim that, in this case, $\scrb(\calN)$ is $K$-flat as a $\Gm$-equivariant $\calO_X$-dg-module. Indeed, it is enough to show that for any $n \in \mathbb{Z}$, $\scrb(\calN)_n$ is a $K$-flat $\calO_X$-dg-module. However, we have
\[
\scrb(\calN)_n = \bigoplus_{k+l=n} (\calT^{\vee})_k \otimes_{\calO_X} \calN_l.
\]
This sum is finite and, as in the proof of Theorem~\ref{thm:Koszuldualitycomodules}, this dg-module can be obtained from the dg-modules $(\calT^{\vee})_k \otimes_{\calO_X} \calN_l$ (ordinary tensor product) by taking shifts and cones a finite number of times. The latter dg-modules are $K$-flat over $\calO_X$, and the cone of a morphism between $K$-flat dg-modules is still $K$-flat, hence these remarks finish the proof of our claim, and also of the lemma.
\end{proof}

\begin{lem}
\label{lem:image-C-K-flat}
If $\calM$ is an object of $\calC(\calT\Mod_-)$ which is $K$-flat as a $\Gm$-equivariant $\calO_X$-dg-module, then $\scra(\calM)$ is $K$-flat as a $\Gm$-equivariant $\calS$-dg-module.
\end{lem}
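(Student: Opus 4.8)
The plan is to exploit the fact that, as a graded $\calS$-module, $\scra(\calM)$ is the induced (free) module $\calS \otimes_{\calO_X} \calM$, and to reduce the computation of the $\calS$-module differential to its ``leading term'' $d_1$ by means of a filtration by the internal degree of the $\calM$-factor.

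First I would record two standard facts about $K$-flatness, to be used freely. (1) A $\Gm$-equivariant $\calO_X$-dg-module is $K$-flat if and only if each of its internal-degree components is $K$-flat as an ordinary $\calO_X$-dg-module (tensor product respects the internal grading, and acyclicity of a $\Gm$-equivariant module is checked componentwise); in particular each $\calM_j$ is $K$-flat over $\calO_X$. (2) For any $\calO_X$-dg-module $\calN$, the induced $\calS$-dg-module $\calS \otimes_{\calO_X} \calN$ (with $\calS$ acting on the left factor and the tensor-product differential) is $K$-flat over $\calS$ as soon as $\calN$ is $K$-flat over $\calO_X$: indeed, for any $\calS$-dg-module $\calP$ one has $\calP \otimes_\calS (\calS \otimes_{\calO_X} \calN) \cong \calP \otimes_{\calO_X} \calN$ as dg-modules, and the right-hand side is acyclic whenever $\calP$ is acyclic.

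Next I would set up the filtration. Since $\calM$ lies in $\calC(\calT\Mod_-)$ we have $\calM_j = 0$ for $j \gg 0$, and each truncation $\calM_{\geq -k} := \bigoplus_{j \geq -k} \calM_j$ is a $\calT$-dg-submodule of $\calM$ (the differential preserves the internal degree, and the $\calT$-action can only raise it). Setting $G_k := \scra(\calM_{\geq -k}) = \calS \otimes_{\calO_X} \calM_{\geq -k}$, this is an $\calS$-dg-submodule of $\scra(\calM)$: it is visibly stable under $d_1$, and the Koszul term $d_2$, which sends $\calS_i \otimes \calM_j$ into $\calS_{i-2} \otimes \calM_{j+2}$, only raises the internal degree of the $\calM$-factor and so preserves $G_k$. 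The filtration $(G_k)$ is increasing, exhaustive, and bounded below. On the associated graded one has $G_k / G_{k-1} \cong \calS \otimes_{\calO_X} \calM_{-k}$, and here the crucial point is that $d_2$ \emph{strictly} raises the $\calM$-degree, hence vanishes on this subquotient; the induced differential is therefore just $d_1$. Thus each $G_k / G_{k-1}$ is precisely the induced module $\calS \otimes_{\calO_X} \calM_{-k}$, which by facts (1) and (2) is $K$-flat over $\calS$.

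Finally I would assemble these pieces using the permanence properties of $K$-flatness: an extension of a $K$-flat $\calS$-dg-module by a $K$-flat one is $K$-flat (tensoring the short exact sequence with an acyclic $\calP$ yields a distinguished triangle with two acyclic vertices), so by induction each $G_k$ is $K$-flat; and a filtered colimit of $K$-flat dg-modules is $K$-flat, since tensor product commutes with colimits and cohomology commutes with filtered colimits. As $\scra(\calM) = \varinjlim_k G_k$, this gives the claim. The only genuinely non-formal input is the observation that, with respect to the chosen filtration, the Koszul differential $d_2$ is of strictly lower order and hence disappears on the associated graded, reducing everything to induced modules; the remaining ingredients are the two $K$-flatness facts above and the standard stability of $K$-flatness under extensions and filtered colimits.
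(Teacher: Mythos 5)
Your proof is correct, but it takes a genuinely different route from the paper's. The paper verifies $K$-flatness directly against test objects: it reduces an acyclic $\calN$ in $\calC(\calS\Mod)$ to a direct limit of acyclic objects of $\calC(\calS\Mod_-)$ (using that $\calS$ sits in non-positive internal degrees), identifies $\calN \otimes_{\calS} \scra(\calM)$ with the twisted tensor product $\calN \otimes_{\calO_X} \calM$, and then quotes the iterated-cone (``staircase'') argument of Theorem~\ref{thm:Koszuldualitycomodules}: each internal-degree component of this twisted product is a finite iterated cone of ordinary tensor products $\calN_i \otimes_{\calO_X} \calM_j$, which are acyclic since $\calM$ is $K$-flat. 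You instead perform the reduction on the module side: you filter $\scra(\calM)$ by the internal degree of the $\calM$-factor so that the Koszul differential vanishes on the associated graded, identify the subquotients as induced modules $\calS \otimes_{\calO_X} \calM_{-k}$ (which are $K$-flat by the associativity isomorphism $\calP \otimes_{\calS} (\calS \otimes_{\calO_X} \calN) \cong \calP \otimes_{\calO_X} \calN$), and invoke permanence of $K$-flatness under extensions and filtered colimits. Both arguments hinge on the same bookkeeping --- $d_2$ strictly raises the $\calM$-degree, and the boundedness hypothesis makes the relevant filtrations work --- but yours trades the paper's reduction of the test object for a reduction of the module itself, which makes it more self-contained and isolates reusable general facts, while the paper's is shorter because the staircase argument is already available. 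One point you should make explicit: $K$-flatness is \emph{not} stable under arbitrary extensions of dg-modules (tensoring need not preserve exactness of a short exact sequence), so your extension step requires that each sequence $0 \to G_{k-1} \to G_k \to G_k / G_{k-1} \to 0$ be split as a sequence of graded $\calS$-modules; this is automatic here, since after forgetting the differentials one has $\calM_{\geq -k} = \calM_{-k} \oplus \calM_{\geq -k+1}$, so the tensored sequence remains (termwise split) exact and the two-out-of-three argument for acyclicity applies.
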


\begin{proof}
We have to check that for any acyclic object $\calN$ of $\calC(\calS\Mod)$, the complex $\calN \otimes_{\calS} \scra(\calM)$ is acyclic. Every object of $\calC(\calS\Mod)$ is a direct limit of objects of $\calC(\calS\Mod_-)$ (because $\calS$ is concentrated in non-positive internal degrees); moreover if the initial object is acyclic one can choose these objects to be also acyclic. Hence we can assume that $\calN$ is in $\calC(\calS\Mod_-)$. Then we have $\calN \otimes_{\calS} \scra(\calM) = \calN \otimes_{\calO_X} \calM$, where the differential is the sum of the usual differential of the tensor product $\calN \otimes_{\calO_X} \calM$ and a Koszul-type differential. The same argument as in the proof of Theorem~\ref{thm:Koszuldualitycomodules} or Lemma~\ref{lem:T-dg-mod-K-flat} proves that this complex is acyclic.
\end{proof}

\begin{proof}[of Proposition~{\rm \ref{prop:kd-morphisms-complexes}}]
We only prove $\rmi$; the proof of $\rmii$ is similar.

The regrading functors of \S \ref{ss:kd-version3} do not play any role here, hence we will rather work with the equivalences $K_{\Omega}$, $K'_{\Omega}$ of Theorem \ref{thm:equiv}. For simplicity, we still denote by $L\Psi^*$ the derived extension of scalars from $\calS$- to $\calS'$-dg-modules. We denote by $\scra'$, $\scrb'$ the functors of \S\ref{ss:definitions} relative to the complex $\calX'$. It is clear from definition that we have an isomorphism of functors
\begin{equation}
\label{eqn:isom-functors-morphism-1}
\mathbf{D}_{\Omega}^{\calT'} \circ R\Phi_* \ \cong \ R\Phi_* \circ \mathbf{D}_{\Omega}^{\calT}.
\end{equation}
(Here, the first $R\Phi_*$ is considered as a functor from $\calD^{\bc}(\calT\Mod_+)$ to $\calD^{\bc}(\calT'\Mod_+)$, while the second one is considered as a functor from $\calD^{\bc}(\calT\Mod_-)$ to $\calD^{\bc}(\calT'\Mod_-)$.)

We claim that there exists an isomorphism of functors from $\calD(\calT\Mod_-)$ to $\calD(\calS'\Mod)$
\begin{equation}
\label{eqn:isom-functors-morphism-2}
\overline{\scra'} \circ R\Phi_* \ \cong \ L\Psi^* \circ \overline{\scra}.
\end{equation}
First, consider the assigment from $\calC(\calT\Mod_-)$ to $\calC(\calS'\Mod)$ given by $\calM \mapsto \Psi^* \scra(\calM) \cong \calS' \otimes_{\calO_X} \calM \cong \scra' \Phi_* (\calM)$. (Here the differential on $\calS' \otimes_{\calO_X} \calM$ involves a Koszul-type differential as usual.) This functor is exact, as the composition of the exact functors $\Phi_*$ and $\scra'$. The induced functor from $\calD(\calT\Mod_-)$ to $\calD(\calS'\Mod)$ is clearly isomorphic to the left hand side of \eqref{eqn:isom-functors-morphism-2}.

By usual properties of composition of derived functors, we obtain a morphism of functors
\begin{equation}
\label{eqn:morphism-functors-Phi-Psi}
L\Psi^* \circ \overline{\scra} \to \overline{\scra'} \circ R\Phi_*.
\end{equation}
What we have to show is that this morphism is an isomorphism. By Lemma \ref{lem:T-dg-mod-K-flat}, it is enough to show that it is an isomorphism when applied to an object $\calM$ of $\calC(\calT\Mod_-)$ which is $K$-flat as a $\Gm$-equivariant $\calO_X$-dg-module. However, in this case $\scra(\calM)$ is $K$-flat over $\calS$ by Lemma \ref{lem:image-C-K-flat}, hence $L\Psi^* \circ \overline{\scra}(\calM)$ is the image in the derived category of $\Psi^* \circ \scra(\calM)$, which finishes the proof of \eqref{eqn:isom-functors-morphism-2}.

Composing \eqref{eqn:isom-functors-morphism-1} with the restriction of \eqref{eqn:isom-functors-morphism-2} and then with the regrading functor $\xi'$ we obtain an isomorphism of functors from $\calD^{\bc}(\calT\Mod_+)$ to $\calD(\calR'\Mod)^\op$
\begin{equation}
\label{eqn:isom-functors-morphism-3}
L\Psi^* \circ \kappa_{\Omega} \ \cong \ \kappa'_{\Omega} \circ R\Phi_*.
\end{equation}
As the right hand side sends $\calD^{\bc}(\calT\Mod_+)$ to $\calD^{\bc}(\calR'\Mod_-)^\op$, and as $\kappa_{\Omega}$ is an equivalence between $\calD^{\bc}(\calT\Mod_+)$ and $\calD^{\bc}(\calR\Mod_-)^\op$, it follows from \eqref{eqn:isom-functors-morphism-3} that $L\Psi^*$ restricts to a functor from $\calD^{\bc}(\calR\Mod_-)$ to $\calD^{\bc}(\calR'\Mod_-)$. Then \eqref{eqn:isom-functors-morphism-3} proves the isomorphism of $\rmi$, hence finishes the proof.
\end{proof}

\subsection{Application to intersection of subbundles}

Now we will explain the geometric content of Proposition \ref{prop:kd-morphisms-complexes} in the context of \S\ref{ss:lkd}. We let $E$ and $E'$ be vector bundles on $X$, and let
\[
\xymatrix@R=0.3cm{
E \ar[rd] \ar[rr]^{\phi} & & E' \ar[ld] \\
& X &
}
\]
be a morphism of vector bundles over $X$. We consider subbundles $F_1, F_2 \subseteq E$ and $F_1', F_2' \subseteq E'$, and assume that
\[
\phi(F_1) \subseteq F_1', \qquad \phi(F_2) \subseteq F_2'.
\]
Let $\calE, \, \calF_1, \, \calF_2, \, \calE', \, \calF_1', \, \calF_2'$ be the respective sheaves of sections of $E, \, F_1, \, F_2, \, E', \, F_1', \, F_2'$. By Theorem \ref{thm:lkd} we have linear Koszul duality equivalences
\begin{align*}
\kappa_{\Omega} : \calD^{\mathrm{c}}_{\Gm}(F_1 \, \rcap_E \, F_2) \ & \xrightarrow{\sim} \ \calD^{\mathrm{c}}_{\Gm}(F_1^{\bot} \, \rcap_{E^*} \, F_2^{\bot})^\op, \\
\kappa_{\Omega}' : \calD^{\mathrm{c}}_{\Gm}(F_1' \, \rcap_{E'} \, F_2') \ & \xrightarrow{\sim} \ \calD^{\mathrm{c}}_{\Gm} \bigl( (F_1')^{\bot} \, \rcap_{(E')^*} \, (F_2')^{\bot} \bigr)^\op.
\end{align*}

We consider the complexes $\calX$ (for the vector bundle $E$) and $\calX'$ (for the vector bundle $E'$) defined as in \S \ref{ss:lkd}. The morphism $\phi$ defines a morphism of complexes $\calX' \to \calX$, to which we can apply the constructions of \S \ref{ss:kd-morphisms-complexes}.

More geometrically, $\phi$ induces a morphism of dg-schemes $\Phi : F_1 \, \rcap_E \, F_2 \to F_1' \, \rcap_{E'} \, F_2'$, and we have a (derived) direct image functor
\[
R\Phi_* : \calD^{\mathrm{c}}_{\Gm}(F_1 \, \rcap_E \, F_2) \to \calD(\calT'\Mod).
\]
This functor is just the restriction of the functor denoted similarly in \S \ref{ss:kd-morphisms-complexes} (in our special case). 

\begin{lem}
\label{lem:direct-image-Phi}
Assume that the induced morphism of schemes between non-derived intersections $F_1 \cap_E F_2 \to F_1' \cap_{E'} F_2'$ is proper. Then the functor $R\Phi_*$ sends $\calD^{\mathrm{c}}_{\Gm}(F_1 \, \rcap_E \, F_2)$ into $\calD^{\mathrm{c}}_{\Gm}(F_1' \, \rcap_{E'} \, F_2')$. 
\end{lem}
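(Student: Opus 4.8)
The plan is to exploit the fact that, in the situation at hand, $R\Phi_*$ is nothing but the (exact) restriction-of-scalars functor along the morphism of dg-algebras $\Phi : \calT' \to \calT$. Consequently, for any object $\calM$ the underlying complex of $R\Phi_* \calM$ and its cohomology $\calH^\bullet(R\Phi_*\calM)$ coincide with those of $\calM$; the only change is that $\calH^\bullet(\calM)$ is now viewed as a module over $\calH^\bullet(\calT')$ through the graded algebra map $\calH^\bullet(\Phi) : \calH^\bullet(\calT') \to \calH^\bullet(\calT)$. Since by definition $\calD^{\mathrm{c}}_{\Gm}(F_1 \, \rcap_E \, F_2) = \calD^{\fg}(\calT\Mod)$ consists of those $\calM$ whose cohomology is locally finitely generated over $\calH^\bullet(\calT)$, the whole statement reduces, by transitivity of finite generation, to the single assertion that $\calH^\bullet(\calT)$ is locally finitely generated as a module over $\calH^\bullet(\calT')$ via $\Phi$. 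This is a local condition on $X$, so I would freely assume $X$ affine.

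Next I would unwind the two cohomology algebras. With $n=1$ the dg-algebra $\calT$ is, as a bigraded $\calO_X$-algebra, $\mathrm{S}(\calF_2^\vee) \otimes_{\calO_X} \bigwedge(\calF_1^\bot)$, equipped with the Koszul differential induced by $\calF_1^\bot \to \calF_2^\vee \hookrightarrow \mathrm{S}(\calF_2^\vee)$. Two finiteness facts then follow. First, $\calH^0(\calT) = \mathrm{S}(\calF_2^\vee)/(\calF_1^\bot)$ is exactly $\calO_{F_1 \cap_E F_2}$, the ring of functions on the non-derived intersection sitting inside $F_2$. Second, each $\calH^{-k}(\calT)$ is a subquotient of the finitely generated $\mathrm{S}(\calF_2^\vee)$-module $\mathrm{S}(\calF_2^\vee) \otimes_{\calO_X} \bigwedge^k(\calF_1^\bot)$, and is annihilated by the ideal generated by the image of $\calF_1^\bot$ (each such generator acts null-homotopically on a Koszul complex, hence kills its homology); as $\mathrm{S}(\calF_2^\vee)$ is Noetherian locally on $X$ and there are only finitely many values of $k$, the total cohomology $\calH^\bullet(\calT)$ is locally finitely generated over $\calH^0(\calT) = \calO_{F_1 \cap_E F_2}$. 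The same applies verbatim to $\calT'$.

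The main step — and the only place where the hypothesis enters — is the passage from properness to module-finiteness of $\calO_{F_1 \cap_E F_2}$ over $\calO_{F_1' \cap_{E'} F_2'}$. Here I would observe that both non-derived intersections are affine over $X$, so that the morphism $F_1 \cap_E F_2 \to F_1' \cap_{E'} F_2'$ is itself affine; being simultaneously affine and proper it is finite, whence $\calH^0(\calT) = \calO_{F_1 \cap_E F_2}$ is a finite module over $\calH^0(\calT') = \calO_{F_1' \cap_{E'} F_2'}$ through $\calH^0(\Phi)$, this map being precisely the comorphism of the given scheme morphism. Combining the three finiteness facts by transitivity — $\calH^\bullet(\calT)$ finite over $\calH^0(\calT)$, then $\calH^0(\calT)$ finite over $\calH^0(\calT')$, and finally $\calH^0(\calT')$ being the degree-zero subalgebra of $\calH^\bullet(\calT')$ — yields that $\calH^\bullet(\calT)$ is locally finitely generated over $\calH^\bullet(\calT')$, as required. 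I expect the genuinely substantive point to be this affine-plus-proper-implies-finite reduction (together with the verification that $\calH^0(\Phi)$ really is the comorphism of the stated scheme map); everything else is bookkeeping with Koszul complexes and transitivity of finite generation.
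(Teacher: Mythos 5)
Your argument is correct, but it takes a genuinely different route from the paper's. The paper proves Lemma~\ref{lem:direct-image-Phi} by reducing to dg-modules over $\calH^0(\calT)$: it uses the commutation of $R\Phi_*$ with the restriction functors $\Theta_*, \Theta'_*$ along $\calT \to \calH^0(\calT)$, $\calT' \to \calH^0(\calT')$, the fact that the essential image of $\Theta_*$ generates $\calD^\fg(\calT\Mod)$ as a triangulated category (citing \cite[Lemma 4.1.1]{MRIM}), and then identifies $R\Phi^0_*$ with the derived direct image of $\Gm$-equivariant coherent sheaves along the proper morphism $F_1 \cap_E F_2 \to F_1' \cap_{E'} F_2'$, so that the conclusion follows from Grothendieck's coherence theorem \cite[Proposition II.2.2]{H}. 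You bypass both of these ingredients: since $R\Phi_* = \Phi_*$ is restriction of scalars and does not change cohomology, the lemma reduces by transitivity of module-finiteness to the single statement that $\calH^\bullet(\calT)$ is locally finitely generated over $\calH^\bullet(\calT')$, which you get from (a) the Koszul null-homotopy plus Noetherianity (finiteness of $\calH^\bullet(\calT)$ over $\calH^0(\calT)$, and likewise for $\calT'$), and (b) the observation that the morphism between the non-derived intersections is automatically affine (both being affine over $X$), hence finite once it is proper, so $\calH^0(\calT)$ is module-finite over $\calH^0(\calT')$. Your route is more elementary --- it trades the generation lemma and the coherence theorem for ``affine $+$ proper $=$ finite'' and standard Noetherian bookkeeping --- and it makes transparent that in this situation the properness hypothesis is really a finiteness hypothesis. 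What the paper's pattern buys is robustness: the same reduction-to-$\calH^0$ plus coherence-theorem argument works for proper morphisms that are not affine, which is exactly the situation in the base-change setting (compare the proof of Lemma~\ref{lem:base-change-derived-functors}$\rmii$, where $\pi$ is proper but in general not finite), so the paper reuses one technique in both places. Note finally that both proofs rest on the same unproved but easy compatibility, namely that $\calH^0(\Phi)$ is the comorphism of the stated morphism of schemes; your write-up has the merit of flagging this explicitly.
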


\begin{proof}
The dg-algebras $\calT$ and $\calT'$ are both concentrated in non-positive cohomological degrees; hence there exist natural morphisms of dg-algebras $\Theta : \calT \to \calH^0(\calT)$, $\Theta' : \calT' \to \calH^0(\calT')$. Let also $\Phi^0 : \calH^0(\calT') \to \calH^0(\calT)$ be the morphism induced by $\Phi$, so that we have $\Phi^0 \circ \Theta' = \Theta \circ \Phi$. Taking direct images (i.e.~restriction of scalars) we obtain a commutative diagram
\[
\xymatrix@C=2cm{
\calD \bigl( \calH^0(\calT)\Mod \bigr) \ar[r]^-{R\Phi^0_*} \ar[d]_-{\Theta_*} & \calD \bigl( \calH^0(\calT')\Mod \bigr) \ar[d]^-{\Theta'_*} \\
\calD(\calT\Mod) \ar[r]^-{R\Phi_*} & \calD(\calT'\Mod).
}
\]

One can easily check that the functor $\Theta_*$ restricts to a functor from $\calD^\fg \bigl( \calH^0(\calT)\Mod \bigr)$ to $\calD^\fg(\calT\Mod)$, and similarly for $\Theta'_*$. Moreover, 
using truncation functors as in~\S\ref{ss:finiteness},
one can check that the essential image of $\Theta_*$ generates the category $\calD^\fg(\calT\Mod)$.
Hence it is enough to prove that $R\Phi^0_*$ sends $\calD^\fg \bigl( \calH^0(\calT)\Mod \bigr)$ into $\calD^\fg \bigl( \calH^0(\calT')\Mod \bigr)$. However the morphism from $F_1 \cap_E F_2$ to $X$ is affine, and the direct image of the structure sheaf under this morphism is $\calH^0(\calT)$, so that we obtain an equivalence of categories
\[
\calD^\fg \bigl( \calH^0(\calT)\Mod \bigr) \ \cong \ \calD^b\Coh^{\Gm}(F_1 \cap_E F_2)
\]
where $t \in \Gm$ acts on $E$ by dilatation by $t^{-2}$ along the fibers, and on $F_1 \cap_E F_2$ by restriction.
Similarly we have an equivalence
\[
\calD \bigl( \calH^0(\calT')\Mod \bigr) \ \cong \ \calD \QCoh^{\Gm}(F_1' \cap_E F_2').
\]
and under these equivalences the functor $R\Phi^0_*$ identifies with the (derived) direct image under the morphism $F_1 \cap_E F_2 \to F_1' \cap_{E'} F_2'$. Hence our claim follows from \cite[Proposition II.2.2]{H}.
\end{proof}

We also consider the (derived) inverse image functor
\[
L\Phi^* : \calD^{\mathrm{c}}_{\Gm}(F_1' \, \rcap_{E'} \, F_2') \to \calD(\calT\Mod).
\]
Again, this functor is the restriction of the functor denoted similarly in \S \ref{ss:kd-morphisms-complexes}.

The morphism $\phi$ induces a morphism of vector bundles
\[
\psi:=\phi^{\vee} : (E')^* \to E^*,
\]
which satisfies $\psi((F_i')^{\bot}) \subset F_i^{\bot}$ for $i=1,2$. Hence the above constructions and results also apply to $\psi$. We use similar notation.

The following result is an immediate application of Proposition \ref{prop:kd-morphisms-complexes} and Lemma \ref{lem:direct-image-Phi}.

\begin{prop}
\label{prop:morphisms}
Assume that $X$ is a nice scheme with dualizing complex $\Omega$.

\begin{enumerate}
\item 
Assume that the morphism of schemes $F_1 \cap_E F_2 \to F_1' \cap_{E'} F_2'$ induced by $\phi$ is proper. Then $L\Psi^*$ sends the subcategory $\calD^{\mathrm{c}}_{\Gm}(F_1^{\bot} \, \rcap_{E^*} \, F_2^{\bot})$ into $\calD^{\mathrm{c}}_{\Gm}((F_1')^{\bot} \, \rcap_{(E')^*} \, (F_2')^{\bot})$. Moreover, there exists a natural isomorphism of functors from $\calD^{\mathrm{c}}_{\Gm}(F_1 \, \rcap_{E} \, F_2)$ to $\calD^{\mathrm{c}}_{\Gm} \bigl( (F_1')^{\bot} \, \rcap_{(E')^*} \, (F_2')^{\bot} \bigr)^\op$:
\[
L\Psi^* \circ \kappa_{\Omega} \ \cong \ \kappa'_{\Omega} \circ R\Phi_*.
\]
\item 
Assume that the morphism of schemes $(F_1')^{\bot} \cap_{(E')^*} (F_2')^{\bot} \to F_1^{\bot} \cap_{E'} F_2^{\bot}$ induced by $\psi$ is proper. Then $L\Phi^*$ sends $\calD^{\mathrm{c}}_{\Gm}(F_1' \, \rcap_{E'} \, F_2')$ into $\calD^{\mathrm{c}}_{\Gm}(F_1 \, \rcap_E \, F_2)$. Moreover, there exists a natural isomorphism of functors from $\calD^{\mathrm{c}}_{\Gm}(F_1' \, \rcap_{E'} \, F_2')$ to $\calD^{\mathrm{c}}_{\Gm}(F_1^{\bot} \, \rcap_{E^*} \, F_2^{\bot})^\op$:
\[
\kappa_{\Omega} \circ L\Phi^* \ \cong \ R\Psi_* \circ \kappa'_{\Omega}.
\]
\end{enumerate}

In particular, if both assumptions are satisfied, the following diagram is commutative:
\[
\xymatrix@C=3cm{
\calD^{\mathrm{c}}_{\Gm}(F_1 \, \rcap_E \, F_2) \ar[r]^-{\kappa_{\Omega}}_-{\sim} \ar@<0.5ex>[d]^-{R\Phi_*} & \calD^{\mathrm{c}}_{\Gm}(F_1^{\bot} \, \rcap_{E^*} \, F_2^{\bot})^\op \ar@<0.5ex>[d]^-{L\Psi^*} \\
\calD^{\mathrm{c}}_{\Gm}(F_1' \, \rcap_{E'} \, F_2') \ar@<0.5ex>[u]^-{L\Phi^*} \ar[r]^-{\kappa'_{\Omega}}_-{\sim} & \calD^{\mathrm{c}}_{\Gm} \bigl( (F_1')^{\bot} \, \rcap_{(E')^*} \, (F_2')^{\bot} \bigr)^\op. \ar@<0.5ex>[u]^-{R\Psi_*}
}
\]
\end{prop}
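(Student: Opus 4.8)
The plan is to deduce everything formally from the functorial isomorphisms of Proposition~\ref{prop:kd-morphisms-complexes}, combined with the finiteness statements of Lemma~\ref{lem:direct-image-Phi} and Proposition~\ref{prop:restriction-kappa}. Throughout I identify the geometric categories with their algebraic definitions from \S\ref{ss:lkd}, so that $\calD^{\mathrm{c}}_{\Gm}(F_1 \, \rcap_E \, F_2) = \calD^{\fg}(\calT\Mod)$, $\calD^{\mathrm{c}}_{\Gm}(F_1^{\bot} \, \rcap_{E^*} \, F_2^{\bot}) = \calD^{\fg}(\calR\Mod)$, and similarly for the primed bundles. Using Lemma~\ref{lem:fg} I then freely replace these by $\calD^{\fg}(\calT\Mod_+)$ and $\calD^{\fg}(\calR\Mod_-)$, on which $\kappa_{\Omega}$ and $\kappa'_{\Omega}$ are defined and are equivalences by Proposition~\ref{prop:restriction-kappa}. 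The functors $R\Phi_*$, $L\Phi^*$, $L\Psi^*$, $R\Psi_*$ appearing here are exactly the restrictions of the functors denoted similarly in \S\ref{ss:kd-morphisms-complexes}, so Proposition~\ref{prop:kd-morphisms-complexes} applies verbatim.

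For part $\rmi$, I would first invoke Lemma~\ref{lem:direct-image-Phi}: properness of $F_1 \cap_E F_2 \to F_1' \cap_{E'} F_2'$ ensures that $R\Phi_*$ restricts to a functor $\calD^{\fg}(\calT\Mod_+) \to \calD^{\fg}(\calT'\Mod_+)$. Composing with $\kappa'_{\Omega}$, which preserves finite generation by Proposition~\ref{prop:restriction-kappa}, the functor $\kappa'_{\Omega} \circ R\Phi_*$ lands in $\calD^{\fg}(\calR'\Mod_-)^\op$. Now the isomorphism $L\Psi^* \circ \kappa_{\Omega} \cong \kappa'_{\Omega} \circ R\Phi_*$ of Proposition~\ref{prop:kd-morphisms-complexes}$\rmi$, together with the fact that $\kappa_{\Omega} : \calD^{\fg}(\calT\Mod_+) \xrightarrow{\sim} \calD^{\fg}(\calR\Mod_-)^\op$ is an equivalence, forces $L\Psi^*$ to send $\calD^{\fg}(\calR\Mod_-)$ into $\calD^{\fg}(\calR'\Mod_-)$; this is precisely the asserted restriction of $L\Psi^*$. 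The isomorphism of functors in $\rmi$ is then the restriction of the isomorphism of Proposition~\ref{prop:kd-morphisms-complexes}$\rmi$ to these finitely generated subcategories.

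Part $\rmii$ is entirely symmetric: I would apply Lemma~\ref{lem:direct-image-Phi} to the dual morphism $\psi = \phi^{\vee}$, whose properness hypothesis is now on $(F_1')^{\bot} \cap_{(E')^*} (F_2')^{\bot} \to F_1^{\bot} \cap_{E^*} F_2^{\bot}$, to conclude that $R\Psi_*$ preserves finite generation, and then run the identical argument using the isomorphism $\kappa_{\Omega} \circ L\Phi^* \cong R\Psi_* \circ \kappa'_{\Omega}$ of Proposition~\ref{prop:kd-morphisms-complexes}$\rmii$ to deduce the finiteness restriction of $L\Phi^*$ and the corresponding functor isomorphism. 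Finally, when both properness hypotheses hold, the commutativity of the square is just the simultaneous reading of the two isomorphisms of $\rmi$ and $\rmii$: the horizontal arrows being $\kappa_{\Omega}$ and $\kappa'_{\Omega}$, these two natural isomorphisms express exactly that the right-hand vertical pair $(L\Psi^*, R\Psi_*)$ is intertwined, through the Koszul duality equivalences, with the left-hand vertical pair $(R\Phi_*, L\Phi^*)$.

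The only genuinely non-formal point — and hence what I expect to be the main obstacle — is the finiteness preservation of the \emph{extension-of-scalars} functors $L\Psi^*$ and $L\Phi^*$. Extension of scalars along $\Psi : \calR \to \calR'$ (resp.\ $\Phi : \calT' \to \calT$) does not visibly send locally finitely generated cohomology to locally finitely generated cohomology, and one does not prove this directly. Rather, it is transported across Koszul duality from the properness-based finiteness of the \emph{direct image} supplied by Lemma~\ref{lem:direct-image-Phi}, using that $\kappa_{\Omega}$ and $\kappa'_{\Omega}$ are equivalences on the finitely generated subcategories. Once this transport is made precise, everything else is a formal manipulation of the natural isomorphisms already established in Proposition~\ref{prop:kd-morphisms-complexes}.
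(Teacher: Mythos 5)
Your proposal is correct and takes essentially the same route as the paper: the authors state this result as an immediate application of Proposition~\ref{prop:kd-morphisms-complexes} and Lemma~\ref{lem:direct-image-Phi}, and your spelled-out argument---transporting the properness-based finiteness of $R\Phi_*$ (resp.\ $R\Psi_*$) across the equivalences of Proposition~\ref{prop:restriction-kappa} to obtain the finiteness of $L\Psi^*$ (resp.\ $L\Phi^*$), then restricting the functor isomorphisms---is exactly the intended reasoning. Your closing observation that the finiteness of the extension-of-scalars functors is not proved directly but inherited through Koszul duality correctly identifies the one non-formal point of the deduction.
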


\section{Linear Koszul duality and base change}
\label{sec:basechange}

\subsection{Statement}
\label{ss:defbasechange}

Let us come back to the setting of \S \ref{ss:kd-version3}. More precisely, let $X$ and $Y$ be nice schemes, and let $\pi: X \to Y$ be a morphism of finite type. Let $\Omega$ be a dualizing complex for $Y$; then by \cite[Corollary VI.3.5]{H} $\pi^! \Omega$ is a dualizing complex for $X$.

Let $\calX$ be a complex of locally free sheaves on $Y$ of the form considered in \S \ref{ss:definitions}, and let $\calR_Y,\calT_Y,\calS_Y$ be the associated dg-algebras. We also consider the complex of locally free sheaves $\pi^* \calX$ on $X$, and let $\calR_X,\calT_X,\calS_X$ be the associated dg-algebras. Note that we have natural isomorphisms of dg-algebras
\[
\calR_X \cong \pi^* \calR_Y, \quad \calS_X \cong \pi^* \calS_Y, \quad \calT_X \cong \pi^* \calT_Y.
\]
We denote by
\[
\kappa_{\Omega}^Y : \calD^{\bc}(\calT_Y\Mod_+) \xrightarrow{\sim} \calD^{\bc}(\calR_Y\Mod_-)^\op, \quad \kappa_{\pi^! \Omega}^X : \calD^{\bc}(\calT_X\Mod_+) \xrightarrow{\sim} \calD^{\bc}(\calR_X\Mod_-)^\op
\]
the associated equivalences of Theorem \ref{thm:equiv2}.

The morphism of schemes $\pi$ induces a morphism of dg-ringed spaces $\hat{\pi} : (X, \, \calT_X) \to (Y, \, \calT_Y)$, and we can consider the associated direct and inverse image functors $\hat{\pi}_* : \calC(\calT_X\Mod) \to \calC(\calT_Y\Mod)$, $\hat{\pi}^* : \calC(\calT_Y\Mod) \to \calC(\calT_X\Mod)$. Note that the following diagram commutes:
\begin{equation}
\label{eqn:diagram-pi-hat}
\vcenter{
\xymatrix@C=2cm{
\calC(\calT_Y\Mod) \ar[r]^-{\hat{\pi}^*} \ar[d]_-{\For} & \calC(\calT_X\Mod) \ar[d]^-{\For} \\
\calC(\calO_Y\Mod) \ar[r]^-{\pi^*} & \calC(\calO_X\Mod).
}
}
\end{equation}
One can also consider the associated derived functors
\[
R\hat{\pi}_* : \calD(\calT_X\Mod) \to \calD(\calT_Y\Mod), \qquad L\hat{\pi}^* : \calD(\calT_Y\Mod) \to \calD(\calT_X\Mod)
\]
(see \S\ref{ss:nice-schemes}).

In the following lemma we will say $\pi$ has finite Tor-dimension if for any $\calF$ in  $\QCoh(Y)$ the set $\{i \in \Z \mid \calH^i(L\pi^* \calF) \neq 0 \}$ is bounded. (Note that this terminology is different from the one used in \cite{H}!)

\begin{lem}
\label{lem:base-change-derived-functors}
\begin{enumerate}
\item Assume that $\pi$ has finite Tor-dimension. Then the functor $L\hat{\pi}^*$ restricts to a functor from $\calD^{\bc}(\calT_Y\Mod_{\pm})$ to $\calD^{\bc}(\calT_X\Mod_{\pm})$.
\item Assume that $\pi$ is proper. Then the functor $R\hat{\pi}_*$ restricts to a functor from $\calD^{\bc}(\calT_X\Mod_+)$ to $\calD^{\bc}(\calT_Y\Mod_+)$.
\end{enumerate}
\end{lem}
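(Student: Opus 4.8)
The plan is to reduce both statements to classical facts about the derived functors $L\pi^*$ and $R\pi_*$ on complexes of $\calO$-modules, by passing through the forgetful functors and working one internal degree at a time. The key point is that, exactly as in Lemma~\ref{lem:S-K-flat}, both $\calT_Y$ and $\calT_X \cong \pi^* \calT_Y$ are, in each internal degree, bounded complexes of flat modules over $\calO_Y$, respectively $\calO_X$; consequently the functors $L\hat{\pi}^*$ and $R\hat{\pi}_*$ can be computed by the same resolutions that compute $L\pi^*$ and $R\pi_*$ on underlying sheaves. Concretely, I would first upgrade the commutative square \eqref{eqn:diagram-pi-hat} to an isomorphism of derived functors $\For \circ L\hat{\pi}^* \cong L\pi^* \circ \For$, and establish the analogous isomorphism $\For \circ R\hat{\pi}_* \cong R\pi_* \circ \For$ for direct images (part of the compatibility package of \cite[\S 3]{BR}). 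Since here $\For$ lands in $\Z^2$-graded $\calO$-modules and both $\pi^*$ and $\pi_*$ respect the internal grading, these refine to degree-by-degree identifications
\[
(L\hat{\pi}^* \calM)_j \ \cong \ L\pi^*(\calM_j), \qquad (R\hat{\pi}_* \calN)_j \ \cong \ R\pi_*(\calN_j)
\]
for every $j \in \Z$, where $\calM_j, \calN_j$ denote the internal-degree-$j$ components viewed as (cohomologically graded) complexes of $\calO$-modules. As the $\bc$-condition is defined internal degree by internal degree, this reduces the lemma to the behaviour of the classical functors $L\pi^*$ and $R\pi_*$.

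For part $\rmi$, to justify the first isomorphism I would take for $\calM$ a $K$-flat resolution over $\calT_Y$; by the argument of Lemma~\ref{lem:T-dg-mod-K-flat} (via \cite[Lemma 1.3.2]{R2} and the $K$-flatness of $\calT_Y$ over $\calO_Y$) such a resolution is automatically $K$-flat over $\calO_Y$, so that $\hat{\pi}^*$ applied to it simultaneously computes $L\hat{\pi}^*\calM$ and, after forgetting, $L\pi^*(\For \calM)$. It then remains to show that $L\pi^*$ preserves bounded and coherent cohomology. Coherence of each $\calH^i(L\pi^*(\calM_j))$ is automatic since $\pi$ is of finite type between Noetherian schemes; boundedness is precisely the content of the finite Tor-dimension hypothesis (a bounded complex with coherent cohomology is built from finitely many quasi-coherent sheaves in a bounded cohomological range, and $L\pi^*$ of each is bounded by assumption). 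Finally the $\pm$ condition is preserved for free: if $\calM_j$ is acyclic then so is $L\pi^*(\calM_j)$, so the vanishing of cohomology for $j$ in a range bounded above (resp.\ below) is carried over.

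For part $\rmii$, the same degree-by-degree reduction turns the claim into the statement that $R\pi_*$ sends bounded coherent complexes to bounded coherent complexes. Here boundedness follows because $X$ is nice, hence Noetherian of finite Krull dimension, so $R\pi_*$ has finite cohomological dimension (Grothendieck vanishing); coherence of the higher direct images is the finiteness theorem for proper morphisms, \cite[Proposition II.2.2]{H}, exactly as used in the proof of Lemma~\ref{lem:direct-image-Phi}. As in part $\rmi$, acyclic internal-degree components are sent to acyclic ones, so the $+$ condition (internal degree bounded below) is preserved, yielding the asserted functor $\calD^{\bc}(\calT_X\Mod_+) \to \calD^{\bc}(\calT_Y\Mod_+)$.

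The main obstacle is the passage from the underived square \eqref{eqn:diagram-pi-hat} to the derived isomorphisms $\For \circ L\hat{\pi}^* \cong L\pi^* \circ \For$ and, especially, $\For \circ R\hat{\pi}_* \cong R\pi_* \circ \For$. For $L\hat{\pi}^*$ this is controlled by the fact, recalled above, that $K$-flat $\calT$-resolutions are $K$-flat over $\calO$; for $R\hat{\pi}_*$ one must instead know that the resolutions over $\calT_X$ used to compute $R\hat{\pi}_*$ also compute $R\pi_*$ on underlying sheaves, which is the least formal point and where I would rely on the compatibility results of \cite[\S 3]{BR}.
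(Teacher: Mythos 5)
Your proposal is correct and follows essentially the same route as the paper: both reduce to the classical case $\calX=0$ by showing the derived functors commute with the forgetful functor (using $K$-flatness of $\calT_Y$ over $\calO_Y$ and \cite[Lemma 1.3.2]{R2} for $L\hat{\pi}^*$, and the compatibility of $R\hat{\pi}_*$ with $R\pi_*$ from \cite[Proposition 3.3.6]{BR} for direct images), and then invoke the standard finiteness results (\cite[Proposition II.4.4]{H} for finite Tor-dimension, \cite[Proposition II.2.2]{H} for properness). Your degree-by-degree unpacking of the $\bc$-condition and the explicit coherence/boundedness arguments are just a spelled-out version of what the paper delegates to these citations.
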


\begin{proof} 
$\rmi$ The dg-algebra $\calT_Y$ is $K$-flat as a $\Gm$-equivariant $\calO_X$-dg-module, hence any $K$-flat $\Gm$-equivariant $\calT_Y$-dg-module is also $K$-flat as a $\Gm$-equivariant $\calO_X$-dg-module (see \cite[Lemma 1.3.2$\rmii$]{R2}). It follows (using \eqref{eqn:diagram-pi-hat}) that the following diagram commutes:
\[
\xymatrix@C=2cm{
\calD(\calT_Y\Mod) \ar[r]^-{L\hat{\pi}^*} \ar[d]_-{\For} & \calD(\calT_X\Mod) \ar[d]^-{\For} \\
\calD(\calO_Y\Mod) \ar[r]^-{L\pi^*} & \calD(\calO_X\Mod).
}
\]
Hence we only have to prove the result when $\calX=0$, in which case it follows from \cite[Proposition II.4.4]{H} and our assumption on $\pi$.

The proof of $\rmii$ is similar, using the compatibility of the functor $R\hat{\pi}_*$ with $R\pi_*$ (see \cite[Proposition 3.3.6]{BR}) and \cite[Proposition II.2.2]{H}.
\end{proof}

In addition to the above functors, we will also consider the following one, in case $\pi$ has finite Tor-dimension:
\[
\hat{\pi}^! := \mathbf{D}_{\pi^! \Omega}^{\calT_X} \circ L\hat{\pi}^* \circ \mathbf{D}_{\Omega}^{\calT_Y} : \calD^{\bc}(\calT_Y\Mod_+) \to \calD^{\bc}(\calT_X\Mod_+)
\]
(see \S \ref{ss:duality} for the definition of Grothendieck--Serre duality functors).

Similarly, $\pi$ induces a morphism of dg-ringed spaces $\tilde{\pi} : (X,\calR_X) \to (Y,\calR_Y)$. By the same arguments as above, if $\pi$ has finite Tor-dimension then we have a derived inverse image functor
\[
L\tilde{\pi}^* : \calD^{\bc}(\calR_Y\Mod_-) \to \calD^{\bc}(\calR_X\Mod_-)
\]
and, if $\pi$ is proper, we have a derived direct image functor
\[
R\tilde{\pi}_* : \calD^{\bc}(\calR_X\Mod_-) \to \calD^{\bc}(\calR_Y\Mod_-).
\]

The following result expresses the compatibility of our Koszul duality equivalence with base change. It is similar in spirit to \cite[Proposition 2.4.5]{R2}.

\begin{prop}
\label{prop:compatibility-kd-base-change}
Assume that $X$ and $Y$ are nice schemes and that $\pi$ is of finite type. Let $\Omega$ be a dualizing complex for $Y$.

\begin{enumerate}
\item 
Assume moreover that $\pi$ has finite Tor-dimension. Then there exists an isomorphism
\[
L\tilde{\pi}^* \circ \kappa_{\Omega}^Y \ \cong \ \kappa_{\pi^! \Omega}^X \circ \hat{\pi}^!
\]
of functors from $\calD^{\bc}(\calT_Y\Mod_+)$ to $\calD^{\bc}(\calR_X\Mod_-)^\op$
\item 
Assume moreover that $\pi$ is proper. Then there exists an isomorphism
\[
R\tilde{\pi}_* \circ \kappa_{\pi^! \Omega}^X \ \cong \ \kappa_{\Omega}^Y \circ R\hat{\pi}_*
\]
of functors from $\calD^{\bc}(\calT_X\Mod_+)$ to $\calD^{\bc}(\calR_Y\Mod_-)^\op$.
\end{enumerate}
\end{prop}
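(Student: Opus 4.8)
The plan is to unwind the definition $\kappa = \xi \circ \overline{\scra} \circ \mathbf{D}$ from Theorems~\ref{thm:equiv} and~\ref{thm:equiv2}, and thereby reduce both statements to compatibilities of the \emph{covariant} duality $\overline{\scra}$ with derived pullback and pushforward. Write $\check{\pi} : (X, \calS_X) \to (Y, \calS_Y)$ for the morphism of dg-ringed spaces induced by $\pi$ (so that $\calS_X \cong \pi^* \calS_Y$), with associated derived functors $L\check{\pi}^*$ and $R\check{\pi}_*$. Since the regrading functors $\xi$ of \S\ref{ss:kd-version3} merely relabel bidegrees, they commute with pullback and pushforward, giving $L\tilde{\pi}^* \circ \xi^Y \cong \xi^X \circ L\check{\pi}^*$ and $R\tilde{\pi}_* \circ \xi^X \cong \xi^Y \circ R\check{\pi}_*$. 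Using in addition that $\mathbf{D}_{\pi^! \Omega}^{\calT_X} \circ \mathbf{D}_{\pi^! \Omega}^{\calT_X} \cong \Id$ (via the isomorphism $\varepsilon_{\pi^! \Omega}^{\calT_X}$ of \S\ref{ss:duality}), statement $\rmi$ reduces to an isomorphism of functors
\[
L\check{\pi}^* \circ \overline{\scra}^Y \ \cong \ \overline{\scra}^X \circ L\hat{\pi}^*,
\]
while, using the $\calT$-equivariant enhancement of Grothendieck--Serre duality for the proper morphism $\pi$, namely $R\hat{\pi}_* \circ \mathbf{D}_{\pi^! \Omega}^{\calT_X} \cong \mathbf{D}_{\Omega}^{\calT_Y} \circ R\hat{\pi}_*$, statement $\rmii$ reduces to an isomorphism of functors
\[
R\check{\pi}_* \circ \overline{\scra}^X \ \cong \ \overline{\scra}^Y \circ R\hat{\pi}_*.
\]

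For $\rmi$, I would argue exactly as in the proof of Proposition~\ref{prop:kd-morphisms-complexes}. At the level of complexes, for $\calM$ in $\calC(\calT_Y\Mod_-)$ there is a canonical isomorphism $\check{\pi}^* \scra^Y(\calM) \cong \calS_X \otimes_{\calO_X} \pi^* \calM \cong \scra^X(\hat{\pi}^* \calM)$, since $\calS_X \cong \pi^* \calS_Y$ and both the tensor differential and the Koszul differential are obtained by pullback (compatibly with diagram~\eqref{eqn:diagram-pi-hat}). This yields, by the usual composition of derived functors, a morphism $L\check{\pi}^* \circ \overline{\scra}^Y \to \overline{\scra}^X \circ L\hat{\pi}^*$, and it remains to check it is an isomorphism. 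By Lemma~\ref{lem:T-dg-mod-K-flat} it suffices to evaluate on an object $\calM$ that is $K$-flat as a $\Gm$-equivariant $\calO_Y$-dg-module. For such an $\calM$: on one hand $\scra^Y(\calM)$ is $K$-flat over $\calS_Y$ by Lemma~\ref{lem:image-C-K-flat}, hence over $\calO_Y$ by Lemma~\ref{lem:S-K-flat}, so $L\check{\pi}^* \overline{\scra}^Y(\calM)$ is computed by the naive pullback; on the other hand an acyclic, $\calO_Y$-$K$-flat complex pulls back to an acyclic complex, so $\hat{\pi}^* \calM$ already computes $L\hat{\pi}^* \calM$, and $\overline{\scra}^X$ is exact. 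The chain-level isomorphism then identifies the two sides.

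For $\rmii$, the key input is the projection formula. For $\calM$ in $\calC(\calT_X\Mod_+)$ one has $\scra^Y(\hat{\pi}_* \calM) = \calS_Y \otimes_{\calO_Y} \pi_* \calM$ and $\check{\pi}_* \scra^X(\calM) = \pi_*(\pi^* \calS_Y \otimes_{\calO_X} \calM)$, and the projection formula provides a natural comparison morphism $\overline{\scra}^Y \circ R\hat{\pi}_* \to R\check{\pi}_* \circ \overline{\scra}^X$. Since each homogeneous internal-degree component of $\calS_Y$ is a bounded complex of locally free $\calO_Y$-modules of finite rank, the derived projection formula holds and this morphism is an isomorphism; one checks its compatibility with the $\calS$-actions exactly as in the chain-level computation.

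The main obstacle is the duality input needed to assemble $\rmii$: one must prove the $\calT$-equivariant form of Grothendieck--Serre duality $R\hat{\pi}_* \circ \mathbf{D}_{\pi^! \Omega}^{\calT_X} \cong \mathbf{D}_{\Omega}^{\calT_Y} \circ R\hat{\pi}_*$. The underlying scheme-level statement is duality for the proper morphism $\pi$ (this is precisely where the choice of dualizing complex $\pi^! \Omega$ on $X$ enters, via \cite[Corollary VI.3.5]{H}); the real work is to upgrade it to the dg-algebra level, compatibly with the $\calA$-module structure defining $\widetilde{\mathrm{D}}_\Omega^\calA$ in \S\ref{ss:duality} and with the fully faithful embedding $\calD^\bc(\calA\Mod) \hookrightarrow \widetilde{\calD}(\calA\Mod)$, so that the comparison can ultimately be verified after forgetting down to $\calO_X$. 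Establishing the derived projection formula in the $\Gm$-equivariant dg-setting with the correct $\calS$-equivariant structure is the other point requiring care, though it is routine given the local freeness of $\calS_Y$.
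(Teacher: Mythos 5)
Your proposal is correct, and its skeleton coincides with the paper's: unwind $\kappa = \xi \circ \overline{\scra} \circ \mathbf{D}$, use that the regrading functors commute with $L\tilde{\pi}^*$ and $R\tilde{\pi}_*$, reduce $\rmi$ to a covariant pullback compatibility plus biduality, and reduce $\rmii$ to a covariant pushforward compatibility plus a dg-enhanced Grothendieck--Serre duality. The paper isolates the latter as Proposition~\ref{prop:compatibility-direct-image-duality} and proves it exactly as you sketch: the comparison morphism is built from the trace morphism $\pi_* \calI_{\pi^! \Omega} \to \calI_\Omega$ and shown to be an isomorphism after forgetting the $\calT_Y$-action, where it becomes Hartshorne's duality morphism. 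The differences lie in the covariant compatibilities (Proposition~\ref{prop:compatibility-C-D-base-change} in the paper). For the pullback one, the paper passes to $\overline{\scrb}$ via Theorem~\ref{thm:Koszuldualitycomodules} and runs the $K$-flatness argument there, whereas you argue directly with $\overline{\scra}$ using Lemmas~\ref{lem:T-dg-mod-K-flat}, \ref{lem:image-C-K-flat} and \ref{lem:S-K-flat}; the two are equivalent and yours is if anything more direct. The genuine divergence is the pushforward compatibility: the paper gets it for free by adjunction from the pullback one --- since $\overline{\scra}$ and $\overline{\scrb}$ are quasi-inverse equivalences and $(L\hat{\pi}^*, R\hat{\pi}_*)$, $(L\check{\pi}^*, R\check{\pi}_*)$ are adjoint pairs on the $\Mod_-$ categories, passing to right adjoints in the pullback isomorphism yields the pushforward one --- while you prove it by a projection-formula argument. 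Your route works (each internal-degree component of $\calS_Y$ is a bounded complex of finite locally free sheaves, and the finite filtration by $\calS$-degree reduces the Koszul-twisted statement to the classical projection formula), but it forces you to verify compatibility of the projection morphism with the Koszul differential and the $\calS$-actions at the derived level, which is precisely the work the adjunction trick avoids; in exchange, your argument does not use that $\overline{\scra}$ is an equivalence. One small slip to fix: in that step the test objects should be taken in $\calC(\calT_X\Mod_-)$, not $\calC(\calT_X\Mod_+)$, since $\overline{\scra}_X$ is applied after $\mathbf{D}_{\pi^! \Omega}^{\calT_X}$, whose essential image lies in $\calD^{\bc}(\calT_X\Mod_-)^\op$.
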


\subsection{Proof of Proposition~\ref{prop:compatibility-kd-base-change}}
\label{ss:proof-base-change}

We begin by proving some compatibility results for the equivalences $\overline{\scra}$ and $\overline{\scrb}$ of \S \ref{ss:KD1} (under weaker assumptions than in the proposition). We use subscripts to indicate the scheme we are working on. The morphism $\pi$ induces a morphism of dg-ringed spaces $\check{\pi} : (X,\calS_X) \to (Y,\calS_Y)$. As above, we have corresponding derived direct and inverse image functors\footnote{Note that $\calS_X$ and $\calS_Y$ are not non-positively graded, so that we cannot apply directly the results quoted in \S\ref{ss:nice-schemes}. However, one can reduce our situation to the one of \S\ref{ss:nice-schemes} using the regrading equivalence of \S\ref{ss:kd-version3}.}
\[
R\check{\pi}_* : \calD(\calS_X\Mod) \to \calD(\calS_Y\Mod), \qquad L\check{\pi}^* : \calD(\calS_Y\Mod) \to \calD(\calS_X\Mod).
\]
Using the same arguments as in the proof of Lemma \ref{lem:base-change-derived-functors}, one can check that these functors restrict to functors
\[
R\check{\pi}_* : \calD(\calS_X\Mod_-) \to \calD(\calS_Y\Mod_-), \qquad L\check{\pi}^* : \calD(\calS_Y\Mod_-) \to \calD(\calS_X\Mod_-)
\]
which form an adjoint pair, and that the functors $R\hat{\pi}_*$ and $L\hat{\pi}^*$ restrict to functors
\[
R\hat{\pi}_* : \calD(\calT_X\Mod_-) \to \calD(\calT_Y\Mod_-), \qquad L\hat{\pi}^* : \calD(\calT_Y\Mod_-) \to \calD(\calT_X\Mod_-)
\]
which also form an adjoint pair.

\begin{prop}
\label{prop:compatibility-C-D-base-change}
Assume that $X$ and $Y$ are nice schemes.

\begin{enumerate}
\item There exists an isomorphism of functors from $\calD(\calT_Y\Mod_-)$ to $\calD(\calS_X\Mod_-)$:
\[
L\check{\pi}^* \circ \overline{\scra}_Y \ \cong \ \overline{\scra}_X \circ L\hat{\pi}^*.
\]
\item There exists an isomorphism of functors from $\calD(\calT_X\Mod_-)$ to $\calD(\calS_Y\Mod_-)$:
\[
\overline{\scra}_Y \circ R\hat{\pi}_* \ \cong \ R\check{\pi}_* \circ \overline{\scra}_X.
\]
\end{enumerate}
\end{prop}

\begin{proof}
$\rmi$ By Theorem \ref{thm:Koszuldualitycomodules}, it is equivalent to prove an isomorphism of functors from $\calD(\calS_Y\Mod_-)$ to $\calD(\calT_X\Mod_-)$:
\[
\overline{\scrb}_X \circ L\check{\pi}^* \ \cong \ L\hat{\pi}^* \circ \overline{\scrb}_Y.
\]
By construction of resolutions, there are enough objects in $\calC(\calS_Y\Mod_-)$ which are $K$-flat as $\Gm$-equivariant $\calS_Y$-dg-modules.
Using this observation and the same arguments as for \eqref{eqn:morphism-functors-Phi-Psi} one can easily construct a morphism of functors from the right hand side to the left hand side; what remains is to prove that it is an isomorphism.
For this it is enough to prove that for any object $\calN$ of $\calC(\calS_Y\Mod_-)$ which is $K$-flat as a $\Gm$-equivariant $\calS_Y$-dg-module, with image $\overline{\calN}$ in the derived category, our morphism
\begin{equation}
\label{eq:isom-D-inverse-image}
L\hat{\pi}^* \circ \overline{\scrb}_Y(\overline{\calN}) \ \to \ \overline{\scrb}_X \circ L\check{\pi}^*(\overline{\calN})
\end{equation}
is an isomorphism.

The right hand side of \eqref{eq:isom-D-inverse-image} is easy to compute: it is isomorphic to the image in the derived category of the $\calT_X$-dg-module $\scrb_X \bigl( \check{\pi}^* \calN \bigr)$. Let us consider now the left hand side. As $\calS_Y$ is $K$-flat as a $\Gm$-equivariant $\calO_Y$-dg-module (see Lemma \ref{lem:S-K-flat}), any $K$-flat $\Gm$-equivariant $\calS_Y$-dg-module is also $K$-flat as a $\Gm$-equivariant $\calO_Y$-dg-module (see \cite[Lemma 1.3.2]{R2}); in particular $\calN$ has this property. As checked in the course of the proof of Lemma \ref{lem:T-dg-mod-K-flat}, this implies that $\scrb_Y (\calN)$ is $K$-flat as a $\Gm$-equivariant $\calO_Y$-dg-module. From this it follows that the left hand side of \eqref{eq:isom-D-inverse-image} is isomorphic to the image in the derived category of $\hat{\pi}^* \bigl( \scrb_Y(\calN) \bigr)$. Indeed, let $\calP$ be a $K$-flat resolution of $\scrb_Y(\calN)$ as a $\Gm$-equivariant $\calT_Y$-dg-module; then the morphism $\hat{\pi}^* \calP \to \hat{\pi}^* \bigl( \scrb_Y(\calN) \bigr)$ is a quasi-isomorphism since the functor $\hat{\pi}^*$ sends every acyclic $\calT_Y$-dg-module which is $K$-flat as a $\Gm$-equivariant $\calO_Y$-dg-module to an acyclic dg-module.

Using these remarks, the fact that \eqref{eq:isom-D-inverse-image} is an isomorphism now follows from the natural isomorphism $\scrb_X \bigl( \check{\pi}^* \calN \bigr) \cong \hat{\pi}^* \bigl( \scrb_Y(\calN) \bigr)$, which finishes the proof of $\rmi$.

Isomorphism $\rmii$ follows from $\rmi$ by adjunction.
\end{proof}

We will also need the following compatibility result for direct images and duality. It is a ``dg-version'' of Hartshorne's Duality Theorem of \cite{H}.

\begin{prop}
\label{prop:compatibility-direct-image-duality}
Assume that $X$ and $Y$ are nice schemes, and that $\pi$ is proper and of finite type. Let $\Omega$ be a dualizing complex for $Y$. Then there exists an isomorphism of functors from $\calD^{\bc}(\calT_X\Mod)$ to $\calD^{\bc}(\calT_Y\Mod)^\op$:
\[
R\hat{\pi}_* \circ \mathbf{D}_{\pi^! \Omega}^{\calT_X} \ \cong \ \mathbf{D}_{\Omega}^{\calT_Y} \circ R\hat{\pi}_*.
\]
\end{prop}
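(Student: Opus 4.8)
The plan is to lift the classical Grothendieck--Serre duality isomorphism to the level of $\calT_Y$-dg-modules, and then to check that the resulting morphism is an isomorphism by forgetting the extra structure, where it reduces to Hartshorne's Duality Theorem. First I fix bounded-below complexes of injective quasi-coherent sheaves $\calI_\Omega$ on $Y$ and $\calI_{\pi^!\Omega}$ on $X$ representing $\Omega$ and $\pi^!\Omega$ (recall $\pi^!\Omega$ is dualizing by \cite[Corollary~VI.3.5]{H}). Since their components are injective, hence flasque, they are $\pi_*$-acyclic; using the compatibility of $R\hat{\pi}_*$ with $R\pi_*$ (\cite[Proposition~3.3.6]{BR}) and the fact that $\sheafHom_{\calO_X}(\calM,\calI_{\pi^!\Omega})$ has flasque components, the left-hand side $R\hat{\pi}_*\circ\mathbf{D}_{\pi^!\Omega}^{\calT_X}(\calM)$ is computed on the nose by the complex $\hat{\pi}_*\sheafHom_{\calO_X}(\calM,\calI_{\pi^!\Omega})$. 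Replacing $\calM$ by a $K$-injective representative (whose components are then flasque, hence $\hat{\pi}_*$-acyclic), the right-hand side $\mathbf{D}_\Omega^{\calT_Y}\circ R\hat{\pi}_*(\calM)$ is likewise computed by $\sheafHom_{\calO_Y}(\hat{\pi}_*\calM,\calI_\Omega)$.

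Next I construct a chain map between these two complexes. The trace morphism of Grothendieck--Serre duality is a map $R\pi_*\pi^!\Omega\to\Omega$ in $\calD\QCoh(Y)$; since $\pi_*\calI_{\pi^!\Omega}$ represents $R\pi_*\pi^!\Omega$ and $\calI_\Omega$ is $K$-injective, it is realized by an honest $\calO_Y$-linear chain map $\mathrm{tr}\colon\pi_*\calI_{\pi^!\Omega}\to\calI_\Omega$, well defined up to homotopy. Composing the canonical morphism $\hat{\pi}_*\sheafHom_{\calO_X}(\calM,\calI_{\pi^!\Omega})\to\sheafHom_{\calO_Y}(\hat{\pi}_*\calM,\pi_*\calI_{\pi^!\Omega})$ with the map induced by $\mathrm{tr}$ yields
\[
\hat{\pi}_*\sheafHom_{\calO_X}(\calM,\calI_{\pi^!\Omega}) \longrightarrow \sheafHom_{\calO_Y}(\hat{\pi}_*\calM,\calI_\Omega).
\]
I then check, by a direct computation, that this is a morphism of $\calT_Y$-dg-modules and is $\Gm$-equivariant: in both duality functors the $\calT$-action is the twisted action induced from the module structure of the first argument, the $\calT_Y$-action on $\hat{\pi}_*\calM$ is the one obtained from the $\calT_X$-action on $\calM$ through the unit $\calT_Y\to\hat{\pi}_*\calT_X$, and $\mathrm{tr}$ is $\calO_Y$-linear and untwisted; hence all the structure maps are compatible. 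Naturality in $\calM$ and independence of the choices (up to the equivalences recalled in \S\ref{ss:nice-schemes}) give a well-defined morphism of functors $R\hat{\pi}_*\circ\mathbf{D}_{\pi^!\Omega}^{\calT_X}\to\mathbf{D}_\Omega^{\calT_Y}\circ R\hat{\pi}_*$.

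It remains to prove this morphism is an isomorphism. Whether a morphism of $\calT_Y$-dg-modules is invertible in $\calD(\calT_Y\Mod)$ can be tested after applying the forgetful functor to $\calD\Sh(Y)$ (using the full faithfulness of \eqref{eqn:qcoh-sh} together with \cite[Proposition~3.3.2]{BR}), since both amount to the underlying map being a quasi-isomorphism. After forgetting the $\calT_Y$-action the above chain map becomes, by construction, precisely the classical duality morphism
\[
R\pi_* R\sheafHom_{\calO_X}(\calM,\pi^!\Omega) \longrightarrow R\sheafHom_{\calO_Y}(R\pi_*\calM,\Omega).
\]
As $\pi$ is proper and the objects lie in $\calD^\bc$, I may argue one internal degree at a time, where each component $\calM_j$ belongs to $\calD^b\Coh(X)$; there the Duality Theorem (\cite[Chapter~VII]{H}) asserts exactly that this map is an isomorphism, which finishes the proof. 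The main obstacle I expect is the second paragraph: producing the trace at the level of complexes and verifying that the induced map genuinely respects the twisted $\calT_Y$-action and the $\Gm$-grading, naturally in $\calM$ — that is, checking that the classical duality isomorphism upgrades to a morphism of dg-modules. Once this equivariant lift is in place, the invertibility is a soft reduction to the non-dg theorem.
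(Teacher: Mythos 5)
Your proposal is correct and follows essentially the same route as the paper's proof: represent both sides by the explicit complexes $\hat{\pi}_*\sheafHom_{\calO_X}(\calJ,\calI_{\pi^!\Omega})$ and $\sheafHom_{\calO_Y}(\hat{\pi}_*\calJ,\calI_\Omega)$ for a $K$-injective resolution $\calJ$, build the comparison map as the canonical morphism followed by the trace $\pi_*\calI_{\pi^!\Omega}\to\calI_\Omega$, and conclude by recognizing it, after forgetting the $\calT_Y$-action, as Hartshorne's duality morphism (your internal-degree-by-internal-degree reduction to $\calD^b\Coh(X)$ is a valid way to invoke the classical theorem). One cosmetic point: your parenthetical claim that a $K$-injective object of $\calC(\calT_X\Mod)$ has flasque components is neither justified nor needed, since $K$-injectivity alone guarantees that $\hat{\pi}_*\calJ$ computes $R\hat{\pi}_*$.
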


\begin{proof}
In this proof we will work with some dg-modules which are not necessarily quasi-coherent, but which have quasi-coherent cohomology. This is allowed by \cite[Proposition 3.3.2]{BR}. Note also that by \cite[Proposition 3.3.6]{BR} we can compute the image under $R\hat{\pi}_*$ of an object of $\calD(\calT_X\Mod)$ in the larger derived category of all sheaves of $\calT_X$-dg-modules. Finally, in this proof we write $Q \calM$ for the image in the suitable derived category of a dg-module $\calM$.

Recall that we have fixed bounded below complexes of injective quasi-coherent $\calO_X$-modules $\calI_{\Omega}$ and $\calI_{\pi^! \Omega}$ such that $Q\calI_{\Omega} \cong \Omega$ and $Q \calI_{\pi^! \Omega} \cong \pi^! \Omega$. By \cite[p.~257]{H}, we can assume that both of them are bounded complexes. Note that, as $X$ and $Y$ are in particular locally Noetherian, the terms of these complexes are injective as $\calO$-modules (see \cite[Proposition II.7.17]{H}).

Start with some $\calM$ in $\calC(\calT_X\Mod)$ such that $Q\calM$ is in $\calD^{\bc}(\calT_X\Mod)$, and denote by $\calJ$ a $K$-injective object of $\calC(\calT_X\Mod)$ endowed with a quasi-isomorphism $\calM \xrightarrow{\qis} \calJ$. Then we have 
\begin{equation}
\label{eq:direct-image-duality-1}
R\hat{\pi}_* \circ \mathbf{D}_{\pi^! \Omega}^{\calT_X}(Q \calM) \ \cong \ Q \hat{\pi}_* \sheafHom_{\calO_X}(\calJ,\calI_{\pi^! \Omega}).
\end{equation}
Indeed, by definition we have $\mathbf{D}_{\pi^! \Omega}^{\calT_X}(Q \calM) \cong Q \sheafHom_{\calO_X}(\calM,\calI_{\pi^! \Omega})$. Now as $\calI_{\pi^! \Omega}$ is a bounded complex of injective $\calO_X$-modules, it is $K$-injective in the category of all $\calO_X$-modules, hence the morphism $\sheafHom_{\calO_X}(\calJ,\calI_{\pi^! \Omega}) \to \sheafHom_{\calO_X}(\calM,\calI_{\pi^! \Omega})$ is a quasi-isomorphism (see \cite[Lemma (2.4.5.1)]{Li}). Now again as $\calI_{\pi^! \Omega}$ is a bounded complex of injective $\calO_X$-module, the complex of $\calO_X$-modules $\sheafHom_{\calO_X}(\calJ,\calI_{\pi^! \Omega})$ is made of flasque sheaves, which are $\pi_*$-acyclic. By \cite[Corollary (3.9.3.5)]{Li}, we deduce that the natural morphism $Q \pi_* \sheafHom_{\calO_X}(\calJ,\calI_{\pi^! \Omega}) \to R\pi_* Q \sheafHom_{\calO_X}(\calJ,\calI_{\pi^! \Omega})$ is an isomorphism. 
This proves \eqref{eq:direct-image-duality-1}.

Now there exists a natural morphism of dg-modules
\[
\hat{\pi}_* \sheafHom_{\calO_X}(\calJ,\calI_{\pi^! \Omega}) \to \sheafHom_{\calO_Y}(\hat{\pi}_* \calJ,\pi_*\calI_{\pi^! \Omega}).
\]
Composing with the morphism induced by the ``trace morphism" $\pi_*\calI_{\pi^! \Omega} \to \calI_{\Omega}$ one obtains a morphism of dg-modules
\[
\hat{\pi}_* \sheafHom_{\calO_Y}(\calJ,\calI_{\pi^! \Omega}) \to \sheafHom_{\calO_Y}(\hat{\pi}_* \calJ,\calI_{\Omega}).
\]
Moreover, the image in the derived category of the right hand side is isomorphic to $\mathbf{D}_{\Omega}^{\calT_Y} \circ R\hat{\pi}_*(Q\calM)$. Hence we have constructed a natural morphism
\[
R\hat{\pi}_* \circ \mathbf{D}_{\pi^! \Omega}^{\calT_X} (Q\calM)\ \to \ \mathbf{D}_{\Omega}^{\calT_Y} \circ R\hat{\pi}_*(Q\calM).
\]
After forgetting the action of $\calT_Y$, this is the ``duality morphism" of \cite[p.~378]{H}, which is known to be an isomorphism (see \cite[Theorem VII.3.3]{H}). Hence this morphism is also an isomorphism, which finishes the proof of the proposition.
\end{proof}

\begin{proof}[of Proposition~{\rm \ref{prop:compatibility-kd-base-change}}]
$\rmi$ We obtain the isomorphism as the following composition of isomorphisms of functors:
\[
\begin{array}{rcl}
L\tilde{\pi}^* \circ \kappa_\Omega^Y & = & L\tilde{\pi}^* \circ \xi_Y \circ \overline{\scra}_Y^\bc \circ \mathbf{D}_\Omega^{\calT_Y} \\
& \cong & \xi_X \circ L{\check \pi}^* \circ \overline{\scra}_Y^\bc \circ \mathbf{D}_\Omega^{\calT_Y} \\
& \overset{\text{Prop.~\ref{prop:compatibility-C-D-base-change}}}{\cong} & \xi_X \circ \overline{\scra}_X^\bc \circ L\hat{\pi}^* \circ \mathbf{D}_\Omega^{\calT_Y} \\
& = & \kappa^X_{\pi^! \Omega} \circ \hat{\pi}^!.
\end{array}
\]

$\rmii$ We obtain the isomorphism as the following composition of isomorphisms of functors:
\[
\begin{array}{rcl}
R\tilde{\pi}_* \circ \kappa^X_{\pi^! \Omega} & = & R\tilde{\pi}_* \circ \xi_X \circ \overline{\scra}^\bc_X \circ \mathbf{D}_{\pi^! \Omega}^{\calT_X} \\
& \cong & \xi_Y \circ R{\check \pi}_* \circ \overline{\scra}^\bc_X \circ \mathbf{D}_{\pi^! \Omega}^{\calT_X} \\
& \overset{\text{Prop.~\ref{prop:compatibility-C-D-base-change}}}{\cong} & \xi_Y \circ \overline{\scra}^\bc_Y \circ R\hat{\pi}_* \circ \mathbf{D}_{\pi^! \Omega}^{\calT_X} \\
& \overset{\text{Prop.~\ref{prop:compatibility-direct-image-duality}}}{\cong} & \kappa^Y_\Omega \circ R\hat{\pi}_*.
\end{array}
\]
\end{proof}

\subsection{Compatibility with inverse image in the case of smooth varieties}

The isomorphism of Proposition~\ref{prop:compatibility-kd-base-change}$\rmi$ is not easy to work with in general since the functor $\hat{\pi}^!$ does not have a completely explicit description. In this subsection we give an easier description of this functor in the case $X$ and $Y$ are Noetherian, integral, separated, regular schemes of finite dimension. In this case $\pi$ has automatically finite Tor-dimension, as follows from \cite[Ex.~III.6.9]{HAG}. Moreover $\Omega$ is a shift of a line bundle (see \cite[Theorem V.3.1 and \S V.10]{H}), hence $\pi^* \Omega$ is also a dualizing complex on $X$. For simplicity we will also assume that $n \leq 1$ and work with locally finitely generated dg-modules, so that we are in the setting of \S\ref{ss:finiteness}.

\begin{lem}
\label{lem:smooth-case}
Assume that $Y$ is a Noetherian, integral, separated, regular scheme of finite dimension.
For any $\calM$ in $\calD^\fg(\calT_Y\Mod)$, there exists an object $\calP$ in $\calC(\calT_Y\Mod)$ such that $\calP_j$ is a finite complex of locally free $\calO_Y$-modules of finite rank for any $j \in \Z$, and whose image in $\calD(\calT_Y\Mod)$ is isomorphic to $\calM$.
\end{lem}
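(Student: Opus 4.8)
The plan is to reduce to a locally finitely generated dg-module and then build an explicit semifree resolution whose generators are finite locally free $\calO_Y$-modules, exploiting that every differential preserves the internal grading while the $\calT_Y$-action strictly raises it.

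First I would invoke Lemma~\ref{lem:fg} to assume that $\calM$ lies in $\calC \calF \calG(\calT_Y)$. Such an $\calM$ is bounded below in internal degree (it has finitely many generators and $\calT_Y$ is concentrated in non-negative internal degrees), and for each $j \in \Z$ the complex $\calM_j$ has bounded and coherent cohomology over $\calO_Y$; indeed each $(\calT_Y)_j$ is a bounded complex of finite locally free sheaves, so $\calH^\bullet(\calT_Y)$ is bounded and coherent in each internal degree, whence $\calD^\fg(\calT_Y\Mod) \subseteq \calD^\bc(\calT_Y\Mod)$. The key structural remark is that the differential of any object of $\calC(\calT_Y\Mod)$ has bidegree $(1,0)$: the generators $\calV^{-1}$ and $\calV^0$ of $\calX$ both sit in internal degree $2$, so $d_\calX$, and hence every module differential, preserves the internal grading. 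Consequently cohomology is computed internal-degree by internal-degree, and a morphism in $\calC(\calT_Y\Mod)$ is a quasi-isomorphism if and only if it is one in each internal degree.

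Next I would construct $\calP$ by ascending induction on the internal degree, starting from the lowest degree $j_0$ with $\calM_{j_0} \neq 0$. Since $Y$ is regular of finite Krull dimension, every complex of $\calO_Y$-modules with bounded coherent cohomology is perfect, i.e.\ quasi-isomorphic to a bounded complex of locally free sheaves of finite rank (see \cite{SGA6}). Applying this to $\calM_{j_0}$ produces such a complex $\calW_{j_0}$, and the free dg-module $\calT_Y \otimes_{\calO_Y} \calW_{j_0}$, with generators placed in internal degree $j_0$, maps to $\calM$ by a morphism that is a quasi-isomorphism in degree $j_0$. As $\calT_Y$ raises internal degree, this summand contributes only to degrees $\geq j_0$, leaving lower degrees untouched; I then move to the next internal degree $j$, form the cone of the partial map there, and resolve it by a bounded finite locally free complex $\calW_j$ of new generators placed in degree $j$. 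At each stage the object to be resolved has bounded coherent cohomology, being the cone of a morphism from a finite locally free complex (built inductively) to $\calM_j$, so regularity again supplies a finite locally free $\calW_j$. Passing to the colimit over all internal degrees yields a semifree $\calT_Y$-dg-module $\calP$, with underlying graded module $\calT_Y \otimes_{\calO_Y} \bigl( \bigoplus_j \calW_j \bigr)$ and a twisted differential, together with a morphism $\calP \xrightarrow{\qis} \calM$ that is a quasi-isomorphism in every internal degree, hence in $\calD(\calT_Y\Mod)$.

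It then remains to verify the finiteness of each component, which is also where I expect the only real difficulty to lie: although the semifree filtration may be infinite, each fixed internal-degree slice must come out finite. Fix $j \in \Z$. Since $\calT_Y$ is concentrated in non-negative internal degrees and each $\calW_{j'}$ is generated in degree $j' \geq j_0$, the component $\calP_j$ receives contributions $(\calT_Y)_{j-j'} \otimes_{\calO_Y} \calW_{j'}$ only from the finitely many $j'$ with $j_0 \leq j' \leq j$ (and $(\calT_Y)_{j-j'} = 0$ unless $j-j'$ is a non-negative even integer). Each $(\calT_Y)_{j-j'}$ is a bounded complex of locally free $\calO_Y$-modules of finite rank, and so is each $\calW_{j'}$; hence $\calP_j$ is a finite complex of locally free $\calO_Y$-modules of finite rank. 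In particular $\calP$ is quasi-coherent, so it is an object of $\calC(\calT_Y\Mod)$ with image isomorphic to $\calM$ in $\calD(\calT_Y\Mod)$, as required. The finiteness of $\calP_j$ thus rests on the combination of two features: regularity and finite Krull dimension of $Y$, used to resolve each internal-degree slice by a bounded finite locally free complex, and the fact that $\calT_Y$ strictly raises the internal grading, which confines the contributions to a given degree to finitely many slices.
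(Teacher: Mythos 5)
Your proof is correct and takes essentially the same route as the paper: reduce to $\calC \calF \calG(\calT_Y)$ via Lemma~\ref{lem:fg}, then build a resolution whose internal-degree slices are bounded complexes of finite locally free sheaves by ascending induction on the internal degree, using that the differential preserves the internal grading, that $\calT_Y$ raises it, and that regularity together with separatedness (via \cite{SGA6}) lets one resolve each slice. The paper simply outsources this second step by citing \cite[Proposition 3.1.1]{MR}, and your explicit construction is in substance that cited argument written out.
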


\begin{proof}
Using truncation functors we can assume that $\calM$ is bounded (for the cohomological grading), and then
by~\cite[Lemma~3.6.2]{MR} we can assume that $\calM$ is a locally finitely generated $\calT_Y$-dg-module. Then the construction in the proof of \cite[Proposition 3.1.1]{MR} produces an object $\calP$ as in the statement and a quasi-isomorphism $\calP \xrightarrow{\qis} \calM$.
\end{proof}

\begin{prop}
\label{prop:!-smooth}
Assume that $X$ and $Y$ are Noetherian, integral, separated, regular schemes of finite dimension. Then there exists an isomorphism of functors from $\calD^\fg(\calT_Y\Mod)$ to $\calD^\bc(\calT_X\Mod)^\op$:
\[
L\hat{\pi}^* \circ \mathbf{D}_\Omega^{\calT_Y} \ \cong \ \mathbf{D}_{\pi^*\Omega}^{\calT_X} \circ L\hat{\pi}^*.
\]
\end{prop}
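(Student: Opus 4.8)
The idea is to evaluate both compositions on particularly convenient representatives and to match the results by the elementary base-change isomorphism for $\sheafHom$. First I would use Lemma~\ref{lem:smooth-case}: since $Y$ is regular, every object of $\calD^\fg(\calT_Y\Mod)$ is isomorphic to the image of a dg-module $\calP$ in $\calC(\calT_Y\Mod)$ each of whose internal-degree components $\calP_j$ is a finite complex of locally free $\calO_Y$-modules of finite rank. On such $\calP$ both derived functors in the statement become underived. Indeed, since $\calT_Y$ is $K$-flat over $\calO_Y$ (the argument of Lemma~\ref{lem:S-K-flat} applies verbatim) and each $\calP_j$ is a bounded complex of flat modules, $\calP$ is $K$-flat as a $\Gm$-equivariant $\calO_Y$-dg-module; combined with the diagram~\eqref{eqn:diagram-pi-hat} and the fact that a morphism of $\calT_X$-dg-modules is a quasi-isomorphism as soon as its underlying $\calO_X$-morphism is, this shows that $L\hat{\pi}^*\calP \cong \hat{\pi}^*\calP$, with underlying $\calO_X$-module $\pi^*\calP$. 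Moreover $\sheafHom_{\calO_Y}(\calP,-)$ preserves quasi-isomorphisms because each $\calP_j$ is locally free of finite rank.

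Next I would exploit regularity at the level of the dualizing complex. By \cite[Theorem~V.3.1 and \S V.10]{H}, $\Omega$ is a shift $L_Y[d]$ of a line bundle $L_Y$, so $\pi^*\Omega \cong (\pi^*L_Y)[d]$ is again a shift of a line bundle. Since $\sheafHom_{\calO_Y}(\calP,-)$ preserves quasi-isomorphisms, in computing $\mathbf{D}_\Omega^{\calT_Y}(\calP)$ the injective resolution $\calI_\Omega$ may be replaced by $L_Y[d]$, giving
\[
\mathbf{D}_\Omega^{\calT_Y}(\calP) \ \cong \ \sheafHom_{\calO_Y}(\calP,L_Y)[d],
\]
a dg-module again with locally free finite-rank components. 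Feeding this through the two compositions, and using the first paragraph (applied also to the locally free dg-modules $\sheafHom_{\calO_Y}(\calP,L_Y)[d]$ and $\pi^*\calP$), I obtain on the one side $\pi^*\sheafHom_{\calO_Y}(\calP,L_Y)[d]$ and on the other side $\sheafHom_{\calO_X}(\pi^*\calP,\pi^*L_Y)[d]$.

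These two dg-modules are identified by the canonical morphism
\[
\pi^*\sheafHom_{\calO_Y}(\calP,L_Y) \ \to \ \sheafHom_{\calO_X}(\pi^*\calP,\pi^*L_Y),
\]
which is an isomorphism because each $\calP_j$ is locally free of finite rank (one checks this Zariski-locally, where $\calP_j$ is free). Using $\calT_X \cong \pi^*\calT_Y$ and the explicit formula $(a\cdot\phi)(m)=(-1)^{|a|\cdot|\phi|}\phi(a\cdot m)$ for the $\calT$-action on a dual, I would verify that this morphism respects the two $\calT_X$-actions, so that it is an isomorphism in $\calC(\calT_X\Mod)$, hence in $\calD^\bc(\calT_X\Mod)^\op$.

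The main difficulty is \emph{naturality}: the representative $\calP$ of Lemma~\ref{lem:smooth-case} is not functorial in $\calM$, so the computation above produces a family of isomorphisms rather than an isomorphism of functors. The remedy, as for the morphism~\eqref{eqn:morphism-functors-Phi-Psi} in \S\ref{sec:morphisms}, is to first construct a genuine morphism of functors $L\hat{\pi}^* \circ \mathbf{D}_\Omega^{\calT_Y} \to \mathbf{D}_{\pi^*\Omega}^{\calT_X} \circ L\hat{\pi}^*$ from the universal properties of the derived functors together with the canonical base-change map above, and only afterwards to check that it is an isomorphism. Since every object of $\calD^\fg(\calT_Y\Mod)$ is isomorphic to such a $\calP$, it suffices to check this on these objects, which is exactly what the preceding paragraphs establish.
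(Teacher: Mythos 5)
Your reduction via Lemma~\ref{lem:smooth-case} and the computation on the representatives $\calP$ (both derived functors become underived, $\mathbf{D}_\Omega^{\calT_Y}(\calP)$ is computed by $\sheafHom_{\calO_Y}(\calP,\Omega)$ with $\Omega$ a shift of a line bundle, and the base-change morphism $\pi^*\sheafHom_{\calO_Y}(\calP,\Omega) \to \sheafHom_{\calO_X}(\pi^*\calP,\pi^*\Omega)$ is an isomorphism of $\calT_X$-dg-modules) is exactly the verification step of the paper's proof, and you correctly identify that the crux is to have a morphism of functors \emph{before} this verification. The gap is in how you propose to obtain that morphism. Invoking ``the universal properties of the derived functors'' does not work here the way it does for \eqref{eqn:morphism-functors-Phi-Psi}: in that case every functor in sight was left-derivable and the canonical comparison $LG \circ LF \to L(G\circ F)$ applied, whereas $L\hat{\pi}^* \circ \mathbf{D}_\Omega^{\calT_Y}$ is an exact (right-derived) functor followed by a genuinely left-derived one, and one would need the left derived functor of the composite $\hat{\pi}^* \sheafHom_{\calO_Y}(-,\calI_\Omega)$ to exist, which requires resolutions $\calJ$ with $\sheafHom_{\calO_Y}(\calJ,\calI_\Omega)$ adapted to $\hat{\pi}^*$ --- not available in general. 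Likewise, the chain-level base-change map naturally lands in $\sheafHom_{\calO_X}(\hat{\pi}^*(-),\pi^*\calI_\Omega)$, and there is no canonical comparison $\pi^*\calI_\Omega \to \calI_{\pi^*\Omega}$: the underived pullback of a complex of injectives can even be acyclic (pull the two-term injective resolution $\calK \to \calK/\calO$ of $\calO_{\mathbb{A}^1}$ back to a closed point; both terms become $0$), so one cannot pass from there to $\mathbf{D}_{\pi^*\Omega}^{\calT_X}$ naturally in an arbitrary object.

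The paper supplies the missing device by an adjunction maneuver. One first constructs, at the underived level, a morphism $\sheafHom_{\calO_Y}(-,\Omega) \to \hat{\pi}_*\sheafHom_{\calO_X}(\hat{\pi}^*(-),\pi^*\Omega)$ from the local adjunction isomorphism $\pi_*\sheafHom_{\calO_X}(\pi^*(-),\pi^*\Omega) \cong \sheafHom_{\calO_Y}(-,\pi_*\pi^*\Omega)$ together with the unit $\Omega \to \pi_*\pi^*\Omega$. The target is a composite of three \emph{right}-derivable functors ($\hat{\pi}^*$ viewed on opposite categories, $\sheafHom_{\calO_X}(-,\pi^*\Omega)$, and $\hat{\pi}_*$), so the usual derived-functor formalism yields a morphism $\mathbf{D}_\Omega^{\calT_Y} \to R\hat{\pi}_* \circ \mathbf{D}_{\pi^*\Omega}^{\calT_X} \circ L\hat{\pi}^*$, and only then does transposition along the adjunction $(L\hat{\pi}^*, R\hat{\pi}_*)$ produce the desired morphism $L\hat{\pi}^* \circ \mathbf{D}_\Omega^{\calT_Y} \to \mathbf{D}_{\pi^*\Omega}^{\calT_X} \circ L\hat{\pi}^*$. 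With this construction supplied, your check on the objects of Lemma~\ref{lem:smooth-case} finishes the argument exactly as in the paper; without it, what you have is a family of isomorphisms, not an isomorphism of functors.
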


\begin{proof}
Recall that by \cite[\S 3.2--3.3]{BR} one can compute the derived functors of inverse and direct image functors also in the categories $\widetilde{\calD}$ of all sheaves of dg-modules, and that we obtain functors which are compatible in the natural sense with their version for categories $\calD$.

First we construct a morphism of functors from $\widetilde{\calC}(\calT_Y\Mod)^\op$ to $\widetilde{\calC}(\calT_Y\Mod)$
\begin{equation}
\label{eqn:morphism-functors-smooth}
\mathbf{D}_\Omega^{\calT_Y} \ \to \ R\hat{\pi}_* \circ \mathbf{D}^{\calT_X}_{\pi^* \Omega} \circ L\hat{\pi}^*.
\end{equation}
For this construction we observe that we have an isomorphism of functors
\[
\pi_*\sheafHom_{\calO_X}(\pi^*(-), \pi^* \Omega) \ \cong \ \sheafHom_{\calO_Y}(-,\pi_* \pi^* \Omega)
\]
by the local version of the $(-)^*$-$(-)_*$ adjunction (see e.g.~\cite[Corollary 2.3.4]{KS}), so that (using the natural morphism $\Omega \to \pi_* \pi^* \Omega$) we obtain a morphism of functors
\[
\sheafHom_{\calO_Y}(-,\Omega) \ \to \ \hat{\pi}_*\sheafHom_{\calO_X}(\hat{\pi}^*(-), \pi^* \Omega)
\]
from $\widetilde{\calC}(\calT_Y\Mod)^\op$ to $\widetilde{\calC}(\calT_Y\Mod)$.
Now we observe that the right derived functor of the left hand side is the left hand side of \eqref{eqn:morphism-functors-smooth}, while the right hand side is the composition of the functors $\hat{\pi}^* : \widetilde{\calC}(\calT_Y\Mod)^{\mathrm{op}} \to \widetilde{\calC}(\calT_X\Mod)^{\mathrm{op}}$, $\sheafHom_{\calO_X}(-,\pi^* \Omega) : \widetilde{\calC}(\calT_X\Mod)^{\mathrm{op}} \to \widetilde{\calC}(\calT_X\Mod)$, $\hat{\pi}_* : \widetilde{\calC}(\calT_X\Mod) \to \widetilde{\calC}(\calT_Y\Mod)$. These functors admit right derived functors, and the composition of these derived functors is the right hand side of \eqref{eqn:morphism-functors-smooth}. Hence we obtain morphism \eqref{eqn:morphism-functors-smooth} by standard properties of (right) derived functors.

By adjunction, from \eqref{eqn:morphism-functors-smooth} we obtain a morphism of functors from $\widetilde{\calC}(\calT_Y\Mod)^\op$ to $\widetilde{\calC}(\calT_X\Mod)$:
\begin{equation}
\label{eqn:morphism-functors-smooth-2}
L\hat{\pi}^* \circ \mathbf{D}_\Omega^{\calT_Y} \ \to \ \mathbf{D}^{\calT_X}_{\pi^* \Omega} \circ L\hat{\pi}^*.
\end{equation}
To conclude the proof, we only have to check that this morphism is an isomorphism on objects of $\calD^\fg(\calT_Y\Mod)$. By Lemma~\ref{lem:smooth-case} it is enough to prove that it is an isomorphism on images in the derived category of objects $\calP$ in $\calC(\calT_Y\Mod)$ such that $\calP_j$ is a finite complex of locally free $\calO_Y$-modules of finite rank for any $j \in \Z$. Let $\calP$ be such an object. It is easy to check that $L\hat{\pi}^* \calP$ is the image in the derived category of $\hat{\pi}^* \calP$, that the natural morphism
\[
\sheafHom_{\calO_X}(\hat{\pi}^* \calP, \pi^* \Omega) \ \to \ \sheafHom_{\calO_X}(\hat{\pi}^* \calP, \calI_{\pi^* \Omega})
\]
is a quasi-isomorphism, and finally that we have a natural isomorphism
\[
\sheafHom_{\calO_X}(\hat{\pi}^* \calP, \pi^* \Omega) \ \cong \ \hat{\pi}^* \sheafHom_{\calO_Y}(\calP, \Omega).
\]
Hence $\mathbf{D}^{\calT_X}_{\pi^* \Omega} \circ L\hat{\pi}^* \calP$ is the image in the derived category of $\hat{\pi}^* \sheafHom_{\calO_X}(\calP, \Omega)$. By similar arguments, one can check that $L\hat{\pi}^* \circ \mathbf{D}_\Omega^{\calT_Y}$ is also the image in the derived category of $\hat{\pi}^* \sheafHom_{\calO_Y}(\calP, \Omega)$, and that \eqref{eqn:morphism-functors-smooth-2} applied to $\calP$ is an isomorphism.
\end{proof}

Combining Proposition~\ref{prop:compatibility-C-D-base-change}$\rmi$ and Proposition~\ref{prop:!-smooth} we obtain the following result.

\begin{cor}
\label{cor:KD-base-change-inverse-image}
Assume that $X$ and $Y$ are Noetherian, integral, separated, regular schemes of finite dimension. Then there exists an isomorphism of functors from $\calD^\fg(\calT_Y\Mod)$ to $\calD^\bc(\calR_X\Mod)^\op$
\[
L\tilde{\pi}^* \circ \kappa_\Omega^{Y} \ \cong \ \kappa_{\pi^*\Omega}^{X} \circ L\hat{\pi}^*.
\]
\end{cor}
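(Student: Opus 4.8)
The plan is to follow the pattern of the proof of Proposition~\ref{prop:compatibility-kd-base-change}$\rmi$ almost verbatim, with the single modification that the Grothendieck--Serre duality step is replaced by the explicit computation of the twisted inverse image furnished by Proposition~\ref{prop:!-smooth}. Recall first that, by the definitions in Theorems~\ref{thm:equiv} and \ref{thm:equiv2}, we have $\kappa_\Omega^Y = \xi_Y \circ \overline{\scra}_Y^\bc \circ \mathbf{D}_\Omega^{\calT_Y}$ and $\kappa_{\pi^*\Omega}^X = \xi_X \circ \overline{\scra}_X^\bc \circ \mathbf{D}_{\pi^*\Omega}^{\calT_X}$. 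Here I use that $\pi^*\Omega$ is again a dualizing complex on $X$ (as noted above, since in the regular case $\Omega$ is a shift of a line bundle), so that $\mathbf{D}_{\pi^*\Omega}^{\calT_X}$, and hence $\kappa_{\pi^*\Omega}^X$, are well defined; this is exactly the point where regularity of $X$ and $Y$ enters.

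First I would record the (formal) compatibility of the regrading equivalences of \S\ref{ss:kd-version3} with inverse image, namely a natural isomorphism $L\tilde\pi^* \circ \xi_Y \cong \xi_X \circ L\check\pi^*$: this holds because $\xi$ is a relabelling of the bigrading that leaves the underlying sheaves, the algebra action and the differential unchanged, so it commutes with $L\hat\pi^*$ termwise. Granting this, I would assemble the desired isomorphism as the following chain, exactly parallel to the display in the proof of Proposition~\ref{prop:compatibility-kd-base-change}:
\[
\begin{array}{rcl}
L\tilde\pi^* \circ \kappa_\Omega^Y & = & L\tilde\pi^* \circ \xi_Y \circ \overline{\scra}_Y^\bc \circ \mathbf{D}_\Omega^{\calT_Y} \\
& \cong & \xi_X \circ L\check\pi^* \circ \overline{\scra}_Y^\bc \circ \mathbf{D}_\Omega^{\calT_Y} \\
& \overset{\text{Prop.~\ref{prop:compatibility-C-D-base-change}}}{\cong} & \xi_X \circ \overline{\scra}_X^\bc \circ L\hat\pi^* \circ \mathbf{D}_\Omega^{\calT_Y} \\
& \overset{\text{Prop.~\ref{prop:!-smooth}}}{\cong} & \xi_X \circ \overline{\scra}_X^\bc \circ \mathbf{D}_{\pi^*\Omega}^{\calT_X} \circ L\hat\pi^* \\
& = & \kappa_{\pi^*\Omega}^X \circ L\hat\pi^*.
\end{array}
\]

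The substance of the argument is not in the formula but in the bookkeeping of subcategories, which I expect to be the main obstacle. Three checks are needed. First, Proposition~\ref{prop:compatibility-C-D-base-change}$\rmi$ is stated on the categories $\calD(\calT_Y\Mod_-)$ without any finiteness condition, so I must verify it descends to the ``$\bc$'' subcategories; this uses that $\pi$ has finite Tor-dimension automatically in the regular case (by \cite[Ex.~III.6.9]{HAG}), so that by Lemma~\ref{lem:base-change-derived-functors}$\rmi$ the functor $L\hat\pi^*$ preserves $\calD^\bc(\calT\Mod_+)$, together with the fact that $\overline{\scra}^\bc$, $L\check\pi^*$ and the regrading preserve the relevant boundedness and coherence of cohomology. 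Second, since Proposition~\ref{prop:!-smooth} is only asserted to be an isomorphism on $\calD^\fg(\calT_Y\Mod)$, I must restrict the whole chain to that subcategory from the outset — which is harmless, as it is precisely the stated domain, and $\calD^\fg(\calT_Y\Mod) \subseteq \calD^\bc(\calT_Y\Mod_+)$ via Lemma~\ref{lem:fg}, so that $\mathbf{D}_\Omega^{\calT_Y}$ applies. Third, I must confirm that the composite lands in the target $\calD^\bc(\calR_X\Mod)^\op$ claimed in the statement, which follows once each intermediate functor is seen to preserve bounded coherent cohomology. With these verifications in place the chain above yields the asserted isomorphism of functors.
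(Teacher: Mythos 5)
Your proposal is correct and takes essentially the same route as the paper: the paper's entire proof is the sentence ``Combining Proposition~\ref{prop:compatibility-C-D-base-change}$\rmi$ and Proposition~\ref{prop:!-smooth} we obtain the following result,'' and the chain of isomorphisms you display (parallel to the proof of Proposition~\ref{prop:compatibility-kd-base-change}, with the Grothendieck--Serre duality step replaced by Proposition~\ref{prop:!-smooth}) is exactly that combination written out. Your three bookkeeping checks -- finite Tor-dimension being automatic for morphisms of regular schemes, restricting to $\calD^\fg(\calT_Y\Mod)$ because Proposition~\ref{prop:!-smooth} only holds there, and the compatibility of the regrading $\xi$ with inverse image -- are precisely the verifications the paper leaves implicit.
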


\subsection{Application to intersections of subbundles}
\label{ss:base-change-lkd}

Now we explain the geometric content on Proposition~\ref{prop:compatibility-kd-base-change} and Corollary~\ref{cor:KD-base-change-inverse-image} in the context of \S\ref{ss:lkd}. We assume as above that $X$ and $Y$ are nice schemes and that $\pi : X \to Y$ is a morphism of finite type. We denote by $\Omega$ a dualizing complex for $Y$.

Consider a vector bundle $E$ on $Y$, and let $F_1, F_2 \subset E$ be subbundles. Consider also $E^X:=E \times_Y X$, which is a vector bundle on $X$, and the subbundles $F_i^X:=F_i \times_Y X \subset E^X$ ($i=1,2$). If $\calE, \, \calF_1, \, \calF_2$ are the respective sheaves of sections of $E, \, F_1, \, F_2$, then $\pi^* \calE, \, \pi^* \calF_1, \, \pi^* \calF_2$ are the sheaves of sections of $E^X, \, F_1^X, \, F_2^X$, respectively. Out of these data we define the complexes $\calX_X$ and $\calX_Y \cong \pi^* \calX_X$ as in \S\ref{ss:lkd}, and then the dg-algebras $\calT_X$, $\calS_X$, $\calR_X$ and $\calT_Y$, $\calS_Y$, $\calR_Y$. Note that we have natural isomorphisms of dg-algebras
\[
\calT_X \cong \pi^* \calT_Y, \quad \calS_X \cong \pi^* \calS_Y, \quad \calR_X \cong \pi^* \calR_Y.
\]
We define the categories
\begin{align*}
\calD_{\Gm}^{\mathrm{c}}(F_1 \, \rcap_E \, F_2), & \quad \calD_{\Gm}^{\mathrm{c}}(F_1^{\bot} \, \rcap_{E^*} \, F_2^{\bot}) \\
\calD_{\Gm}^{\mathrm{c}}(F_1^X \, \rcap_{E^X} \, F_2^X), & \quad \calD_{\Gm}^{\mathrm{c}}((F_1^X)^{\bot} \, \rcap_{(E^X)^*} \, (F_2^X)^{\bot})
\end{align*}
as in \S\ref{ss:lkd}. Then by Theorem~\ref{thm:lkd} there are equivalences of categories
\begin{align*}
\kappa^X_{\pi^! \Omega} : \calD^{\mathrm{c}}_{\Gm}(F_1^X \, \rcap_{E^X} \, F_2^X) \ & \xrightarrow{\sim} \ \calD^{\mathrm{c}}_{\Gm}((F_1^X)^{\bot} \, \rcap_{(E^X)^*} \, (F_2^X)^{\bot})^\op, \\
\kappa^Y_\Omega : \calD^{\mathrm{c}}_{\Gm}(F_1 \, \rcap_E \, F_2) \ & \xrightarrow{\sim} \ \calD^{\mathrm{c}}_{\Gm}(F_1^{\bot} \, \rcap_{E^*} \, F_2^{\bot})^\op.
\end{align*}
If $X$ and $Y$ are Noetherian, integral, separated, regular schemes of finite dimension, we also have an equivalence
\[
\kappa^X_{\pi^* \Omega} : \calD^{\mathrm{c}}_{\Gm}(F_1^X \, \rcap_{E^X} \, F_2^X) \ \xrightarrow{\sim} \ \calD^{\mathrm{c}}_{ \Gm}((F_1^X)^{\bot} \, \rcap_{(E^X)^*} \, (F_2^X)^{\bot})^\op.
\]

The morphism of schemes $\pi$ induces a morphism of dg-schemes
\[
\hat{\pi} : F_1^X \, \rcap_{E^X} \, F_2^X \to F_1 \, \rcap_E \, F_2.
\]
This morphism can be represented by the natural morphism of dg-ringed spaces $(X, \, \calT_X) \to (Y, \, \calT_Y)$.

\begin{lem}
\begin{enumerate}
\item
Assume $\pi$ has finite Tor-dimension. Then the functor
\[
L \hat{\pi}^* : \calD_{\Gm}^{\mathrm{c}}(F_1 \, \rcap_E \, F_2) \ \to \ \calD(\calT_X\Mod)
\]
takes values in $\calD_{\Gm}^{\mathrm{c}}(F_1^X \, \rcap_{E^X} \, F_2^X)$.
\item
Assume $\pi$ is proper. Then the functor
\[
R \hat{\pi}_* : \calD_{ \Gm}^{\mathrm{c}}(F_1^X \, \rcap_{E^X} \, F_2^X) \ \to \ \calD(\calT_Y \Mod)
\]
takes values in $\calD_{\Gm}^{\mathrm{c}}(F_1 \, \rcap_E \, F_2)$.
\end{enumerate}
\end{lem}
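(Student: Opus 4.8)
The two assertions say, after unwinding the definitions $\calD^{\mathrm c}_{\Gm}(F_1\rcap_E F_2)=\calD^{\fg}(\calT_Y\Mod)$ and $\calD^{\mathrm c}_{\Gm}(F_1^X\rcap_{E^X}F_2^X)=\calD^{\fg}(\calT_X\Mod)$, that $L\hat\pi^*$ preserves $\calD^{\fg}$ when $\pi$ has finite Tor-dimension, and that $R\hat\pi_*$ preserves $\calD^{\fg}$ when $\pi$ is proper. Since $\calD^{\fg}\subset\calD^{\bc}$ and both functors already preserve the relevant $\calD^{\bc}$ subcategories by Lemma~\ref{lem:base-change-derived-functors}, the only point to establish is that the cohomology of the output is again locally finitely generated over $\calH^\bullet(\calT)$. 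I would treat the two parts separately, using generation results to reduce to explicit objects.

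For $\rmi$ I would first record, as in the proof of Proposition~\ref{prop:restriction-kappa}, that $\calD^{\fg}(\calT_Y\Mod)\cong\calD\calF\calG(\calT_Y)$ (Lemma~\ref{lem:fg}) is generated as a triangulated category by the free modules $\calT_Y\otimes_{\calO_Y}\calF$ with $\calF\in\calD^b\Coh(Y)$; this follows from the quasi-free resolutions of \cite[Proposition~3.1.1]{MR} (cf.~Lemma~\ref{lem:smooth-case}), whose underlying graded $\calT_Y$-modules are finite successive extensions of such free modules. As $L\hat\pi^*$ is triangulated, it suffices to evaluate it on these generators. Using the commutative square \eqref{eqn:diagram-pi-hat} and the isomorphism $\calT_X\cong\pi^*\calT_Y$, a flat resolution of $\calF$ gives
\[
L\hat\pi^*(\calT_Y\otimes_{\calO_Y}\calF)\ \cong\ \calT_X\otimes_{\calO_X}L\pi^*\calF .
\]
Because $\pi$ has finite Tor-dimension and is of finite type between Noetherian schemes, $L\pi^*\calF$ lies in $\calD^b\Coh(X)$; hence $\calT_X\otimes_{\calO_X}L\pi^*\calF$ is a finitely generated $\calT_X$-dg-module, i.e.~an object of $\calD^{\fg}(\calT_X\Mod)$. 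This proves $\rmi$.

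For $\rmii$ I would argue exactly as in Lemma~\ref{lem:direct-image-Phi}, now for the morphism of dg-ringed spaces $\hat\pi:(X,\calT_X)\to(Y,\calT_Y)$. Passing to $\calH^0$ via $\Theta_X:\calT_X\to\calH^0(\calT_X)$ and $\Theta_Y:\calT_Y\to\calH^0(\calT_Y)$ yields a commutative square relating $R\hat\pi_*$ to the induced functor $R\hat\pi^0_*$ at the $\calH^0$-level. Since the essential image of $\Theta_{X*}$ generates $\calD^{\fg}(\calT_X\Mod)$ (see \cite[Lemma~4.1.1]{MRIM}) and $\Theta_{X*}$, $\Theta_{Y*}$ preserve the $\fg$-subcategories, it is enough to show that $R\hat\pi^0_*$ sends $\calD^{\fg}(\calH^0(\calT_X)\Mod)$ into $\calD^{\fg}(\calH^0(\calT_Y)\Mod)$. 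Taking $\calH^0$ identifies $\calD^{\fg}(\calH^0(\calT_Y)\Mod)\cong\calD^b\Coh^{\Gm}(F_1\cap_E F_2)$ and $\calD^{\fg}(\calH^0(\calT_X)\Mod)\cong\calD^b\Coh^{\Gm}(F_1^X\cap_{E^X}F_2^X)$, and under these $R\hat\pi^0_*$ becomes the derived direct image along the projection $F_1^X\cap_{E^X}F_2^X=(F_1\cap_E F_2)\times_Y X\to F_1\cap_E F_2$. This projection is the base change of $\pi$, hence proper, and the conclusion follows from the coherence of proper direct images \cite[Proposition~II.2.2]{H} (in its $\Gm$-equivariant form).

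The main obstacle is precisely the passage from $\calD^{\bc}$ to $\calD^{\fg}$ in $\rmii$: local finite generation over $\calH^\bullet(\calT_Y)$ is not a condition that can be checked one internal degree at a time, so it cannot be read off directly from Lemma~\ref{lem:base-change-derived-functors}. The device that overcomes this is the reduction to the $\calH^0$-level, where the question becomes the classical statement that proper morphisms preserve coherence; identifying the base-changed morphism with the projection $(F_1\cap_E F_2)\times_Y X\to F_1\cap_E F_2$ and verifying its properness is the essential geometric input.
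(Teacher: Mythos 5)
Your part (ii) is essentially correct: it is the same reduction-to-$\calH^0$ argument that the paper uses for Lemma~\ref{lem:direct-image-Phi}, transported to the base-change situation (identify $F_1^X \cap_{E^X} F_2^X$ with $(F_1 \cap_E F_2) \times_Y X$, observe the base-changed morphism is proper, and invoke \cite[Proposition II.2.2]{H}). The paper's own proof of (ii) instead restricts scalars along the subalgebra $\mathrm{S}(\pi^* \calF_2^{\vee}) \subset \calT_X$ and uses preservation of $K$-injectives, but your route is sound. The genuine problem is part (i).

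The generation claim on which your part (i) rests is false in the relevant generality. You assert that $\calD^{\fg}(\calT_Y\Mod)$ is generated as a triangulated category by the free modules $\calT_Y \otimes_{\calO_Y} \calF$ with $\calF \in \calD^b\Coh(Y)$, because the quasi-free resolutions of \cite[Proposition 3.1.1]{MR} are ``finite successive extensions'' of such modules. They are not: those resolutions are finite in each internal degree, but involve infinitely many internal and cohomological degrees, so they exhibit an object only as an \emph{infinite} extension of free modules, which gives no triangulated generation. In fact generation fails whenever $\calF_1^{\bot} \neq 0$, i.e.\ precisely when $\calX$ is a genuine two-term complex, which is the situation of \S\ref{ss:lkd}. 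Concretely, take $Y$ a point, $E = \mathbb{A}^1$, $F_1 = F_2 = 0$, so that $\calT_Y$ is the exterior algebra on one generator of bidegree $(-1,2)$. The trivial module $\calO_Y$ lies in $\calD^{\fg}(\calT_Y\Mod)$, and $\Ext^{\bullet}_{\calT_Y}(\calO_Y,\calO_Y)$ is a polynomial algebra on a generator of cohomological degree $2$, hence nonzero in infinitely many degrees; on the other hand, every object $\calM$ of the triangulated (even thick) subcategory generated by the $\calT_Y \otimes \calF$ satisfies $\Ext^i_{\calT_Y}(\calM,\calO_Y) = 0$ for $i \gg 0$ (this holds for free modules and is preserved by shifts, cones and direct summands). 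So $\calO_Y$ is not in that subcategory. Note that the paper itself claims generation by free modules only in the case $n=0$ (first case of the proof of Proposition~\ref{prop:restriction-kappa}, via the result of \cite{CG} for honest vector bundles); for $n=1$ it systematically works around this, and so must you.

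The paper's proof of (i) avoids generation altogether: since $\calH^{\bullet}(\calT_X)$ is locally finite over $\calT_X^0 \cong \mathrm{S}(\pi^*\calF_2^{\vee})$, membership in $\calD^{\fg}(\calT_X\Mod)$ can be \emph{tested} after restriction of scalars to $\mathrm{S}(\pi^*\calF_2^{\vee})$; this restriction commutes with $L\hat{\pi}^*$ because $\calT_Y$ is $K$-flat over $\mathrm{S}(\calF_2^{\vee})$; and after restriction the statement becomes the classical fact that derived pullback along the finite-Tor-dimension morphism of honest vector bundles $F_2^X \to F_2$ preserves $\calD^b\Coh^{\Gm}$. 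Alternatively, you could try to repair (i) using the same generation you invoke in (ii) (by the essential image of $\Theta_{Y*}$, which \emph{is} a valid generating family, by \cite[Lemma 4.1.1]{MRIM}), but you would then have to prove that the cohomology of $L\hat{\pi}^*\Theta_{Y*}\calN$ is coherent over $\calH^0(\calT_X)$; this is a genuine coherence-of-Tor statement, since $F_1 \cap_E F_2 \to Y$ need not be flat. The paper's route through $F_2$, which is flat over $Y$, is exactly what makes this step painless.
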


\begin{proof}
$\rmi$
As $\calT_X \cong \pi^* \calT_Y$ and $\calT_Y$ is $K$-flat over $\calT_Y^0 \cong \mathrm{S}(\calF_2^{\vee})$, the following diagram commutes:
\[
\xymatrix@C=2cm{
\calD_{\Gm}^{\mathrm{c}}(F_1 \, \rcap_E \, F_2) \ar[r]^-{L \hat{\pi}^*} \ar[d]^-{\For} & \calD(\calT_X \Mod) \ar[d]^-{\For} \\ 
\calD^\fg(\mathrm{S}(\calF_2^{\vee})\Mod) \ar[r]^-{L\overline{\pi}^*} & \calD(\mathrm{S}(\pi^* \calF_2^{\vee})\Mod). 
}
\]
On the bottom line, $\overline{\pi}$ is the morphism of dg-schemes $(X, \, \mathrm{S}(\pi^* \calF_2^{\vee})) \to (Y, \, \mathrm{S}(\calF_2^{\vee}))$ induced by $\pi$. But $\calD^\fg(\mathrm{S}(\calF_2^{\vee})\Mod)$ is naturally equivalent to $\calD^b \Coh^{\Gm}(F_2)$ (see the proof of Lemma~\ref{lem:direct-image-Phi}), and $\calD(\mathrm{S}(\pi^* \calF_2^{\vee})\Mod)$ to $\calD \QCoh^{\Gm}(F_2^X)$. Moreover, via these indentifications, $L\overline{\pi}^*$ is the inverse image functor for the morphism $F_2^X \to F_2$ induced by $\pi$. Hence $L\overline{\pi}^*$ takes values in $\calD^\fg(\mathrm{S}(\pi^* \calF_2^{\vee})\Mod)$ by~\cite[Proposition~II.4.4]{H}, and our claim follows.

$\rmii$
The proof is similar. It uses the fact that, as $\calT_X$ is $K$-flat over $\mathrm{S}(\pi^* \calF_2^{\vee})$, every $K$-injective object in $\calC(\calT_X\Mod)$ has an image in $\calC(\mathrm{S}(\calF_2^{\vee})\Mod)$ which is also $K$-injective (see \cite[Lemma 1.3.2]{R2}), so that the diagram
\[
\xymatrix@C=2cm{
\calD_{\Gm}^{\mathrm{c}}(F_1^X \, \rcap_{E^X} \, F_2^X) \ar[r]^-{R\hat{\pi}^*} \ar[d]^-{\For} & \calD(\calT_Y \Mod) \ar[d]^-{\For} \\ 
\calD^\fg(\mathrm{S}(\pi^* \calF_2^{\vee})\Mod) \ar[r]^-{R\overline{\pi}^*} & \calD(\mathrm{S}(\calF_2^{\vee})\Mod)
}
\]
commutes.
\end{proof}

Similarly, $\pi$ induces a morphism of dg-schemes
\[
\tilde{\pi} : (F_1^X)^{\bot} \, \rcap_{(E^X)^*} \, (F_2^X)^{\bot} \to F_1^{\bot} \, \rcap_{E^*} \, F_2^{\bot},
\]
hence, if $\pi$ has finite Tor-dimension, a functor
\[
L \tilde{\pi}^* : \calD_{\Gm}^{\mathrm{c}}(F_1^{\bot} \, \rcap_{E^*} \, F_2^{\bot}) \ \to \ \calD_{\Gm}^{\mathrm{c}}((F_1^X)^{\bot} \, \rcap_{(E^X)^*} \, (F_2^X)^{\bot}),
\]
and, if $\pi$ is proper, a functor
\[
R \tilde{\pi}_* : \calD_{\Gm}^{\mathrm{c}}((F_1^X)^{\bot} \, \rcap_{(E^X)^*} \, (F_2^X)^{\bot}) \ \to \ \calD_{\Gm}^{\mathrm{c}}(F_1^{\bot} \, \rcap_{E^*} \, F_2^{\bot}).
\]

The following result is an immediate consequence of Proposition~\ref{prop:compatibility-kd-base-change} and Corollary~\ref{cor:KD-base-change-inverse-image}.

\begin{prop} 
\label{prop:basechange}
Assume that $X$ and $Y$ are nice schemes and that $\pi$ is of finite type. Let $\Omega$ be a dualizing complex for $Y$.

\begin{enumerate}
\item
If $\pi$ is proper, there exists a natural isomorphism
\[
\kappa^Y_\Omega \circ R\hat{\pi}_* \ \cong \ R\tilde{\pi}_* \circ \kappa^X_{\pi^! \Omega}
\]
of functors from $\calD_{\Gm}^{\mathrm{c}}(F_1^X \, \rcap_{E^X} \, F_2^X)$ to $\calD_{\Gm}^{\mathrm{c}}(F_1^{\bot} \, \rcap_{E^*} \, F_2^{\bot})^\op$.
\item
If $\pi$ has finite Tor-dimension, then the functor $\hat{\pi}^!$ sends the subcategory $\calD_{\Gm}^{\mathrm{c}}(F_1 \, \rcap_E \, F_2)$ into $\calD_{\Gm}^{\mathrm{c}}(F_1^X \, \rcap_{E^X} \, F_2^X)$, and there exists a natural isomorphism
\[
L\tilde{\pi}^* \circ \kappa^Y_\Omega \ \cong \ \kappa^X_{\pi^! \Omega} \circ \hat{\pi}^!
\]
of functors from $\calD_{\Gm}^{\mathrm{c}}(F_1 \, \rcap_{E} \, F_2)$ to $\calD_{\Gm}^{\mathrm{c}}((F_1^X)^{\bot} \, \rcap_{(E^X)^*} \, (F_2^X)^{\bot})^\op$
\item
If $X$ and $Y$ are Noetherian, integral, separated, regular schemes of finite dimension, then there exists a natural isomorphism
\[
L\tilde{\pi}^* \circ \kappa^Y_\Omega \ \cong \ \kappa^X_{\pi^* \Omega} \circ L\hat{\pi}^*
\]
of functors from $\calD_{\Gm}^{\mathrm{c}}(F_1 \, \rcap_{E} \, F_2)$ to $\calD_{\Gm}^{\mathrm{c}}((F_1^X)^{\bot} \, \rcap_{(E^X)^*} \, (F_2^X)^{\bot})^\op$
\end{enumerate}
\end{prop}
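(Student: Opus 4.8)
The plan is to obtain all three isomorphisms by restricting the abstract statements of Proposition~\ref{prop:compatibility-kd-base-change} and Corollary~\ref{cor:KD-base-change-inverse-image} from the categories $\calD^{\bc}(\calT\Mod_\pm)$ and $\calD^{\bc}(\calR\Mod_-)$ to the subcategories of objects with locally finitely generated cohomology, and then to rewrite everything in the geometric notation of \S\ref{ss:lkd}. No new computation is needed; the entire content is a bookkeeping of which functors preserve the finiteness condition.

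First I would set up the dictionary. By definition $\calD^{\mathrm{c}}_{\Gm}(F_1 \, \rcap_E \, F_2) = \calD^{\fg}(\calT\Mod)$ and $\calD^{\mathrm{c}}_{\Gm}(F_1^{\bot} \, \rcap_{E^*} \, F_2^{\bot}) = \calD^{\fg}(\calR\Mod)$, and likewise over $X$. By Lemma~\ref{lem:fg} these are identified with $\calD^{\fg}(\calT\Mod_+)$ and $\calD^{\fg}(\calR\Mod_-)$, and by Proposition~\ref{prop:restriction-kappa} the equivalence $\kappa_\Omega$ of Theorem~\ref{thm:equiv2} restricts to an equivalence between them. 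Thus $\kappa^Y_\Omega$, $\kappa^X_{\pi^! \Omega}$ and $\kappa^X_{\pi^* \Omega}$ appearing in the statement are exactly the restrictions of the corresponding equivalences of Theorem~\ref{thm:equiv2}. Moreover, the preceding Lemma (together with the functors $L\tilde{\pi}^*$, $R\tilde{\pi}_*$ introduced just before the statement) guarantees that, under the respective hypotheses, $L\hat{\pi}^*$, $R\hat{\pi}_*$ on the $\calT$-side and $L\tilde{\pi}^*$, $R\tilde{\pi}_*$ on the $\calR$-side preserve these finitely generated subcategories.

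Granting this, parts $\rmi$ and $\mathrm{(iii)}$ are immediate. Part $\rmi$ is Proposition~\ref{prop:compatibility-kd-base-change}$\rmii$, read as the isomorphism $\kappa^Y_\Omega \circ R\hat{\pi}_* \cong R\tilde{\pi}_* \circ \kappa^X_{\pi^! \Omega}$ and restricted to the finitely generated subcategories; part $\mathrm{(iii)}$ is the restriction of Corollary~\ref{cor:KD-base-change-inverse-image} rewritten in geometric notation. In both cases every functor involved preserves the finiteness condition by the remarks above, so the abstract isomorphism simply restricts.

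The one point that is not purely formal is the claim in part $\rmii$ that $\hat{\pi}^!$ sends $\calD^{\mathrm{c}}_{\Gm}(F_1 \, \rcap_E \, F_2)$ into $\calD^{\mathrm{c}}_{\Gm}(F_1^X \, \rcap_{E^X} \, F_2^X)$; this is \emph{not} obvious from the definition $\hat{\pi}^! = \mathbf{D}_{\pi^! \Omega}^{\calT_X} \circ L\hat{\pi}^* \circ \mathbf{D}_{\Omega}^{\calT_Y}$, since the duality functors are only controlled on $\calD^{\bc}$. The efficient way around this obstacle is to read the preservation off from the isomorphism itself. Restricting Proposition~\ref{prop:compatibility-kd-base-change}$\rmi$ to $\calD^{\mathrm{c}}_{\Gm}(F_1 \, \rcap_E \, F_2)$, the left-hand functor $L\tilde{\pi}^* \circ \kappa^Y_\Omega$ takes values in $\calD^{\mathrm{c}}_{\Gm}((F_1^X)^{\bot} \, \rcap_{(E^X)^*} \, (F_2^X)^{\bot})^\op$, because $\kappa^Y_\Omega$ lands in $\calD^{\mathrm{c}}_{\Gm}(F_1^{\bot} \, \rcap_{E^*} \, F_2^{\bot})^\op$ and $L\tilde{\pi}^*$ preserves the c-subcategory. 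Hence the right-hand functor $\kappa^X_{\pi^! \Omega} \circ \hat{\pi}^!$ takes values there as well; and since $\kappa^X_{\pi^! \Omega}$ is an equivalence restricting to the finitely generated subcategories, $\hat{\pi}^!$ must send $\calD^{\mathrm{c}}_{\Gm}(F_1 \, \rcap_E \, F_2)$ into $\calD^{\mathrm{c}}_{\Gm}(F_1^X \, \rcap_{E^X} \, F_2^X)$. With this preservation established, the displayed isomorphism of $\rmii$ is again just the restriction of Proposition~\ref{prop:compatibility-kd-base-change}$\rmi$, which completes the argument.
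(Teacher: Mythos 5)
Your proposal is correct and follows essentially the same route as the paper, which derives the proposition as an immediate consequence of Proposition~\ref{prop:compatibility-kd-base-change} and Corollary~\ref{cor:KD-base-change-inverse-image}, using the preceding lemma (and its analogue for $\tilde{\pi}$) to see that all functors preserve the subcategories with locally finitely generated cohomology. Your extra argument in part (ii) — deducing that $\hat{\pi}^!$ preserves these subcategories from the isomorphism itself, since $\kappa^X_{\pi^!\Omega}$ is an equivalence restricting to an equivalence of the finitely generated subcategories — is exactly the intended reading of the paper's claim.
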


\end{document}